\documentclass[  headinclude,  open=right, toc=listof, toc=bibliography]{article}
\usepackage[T1]{fontenc} 
\usepackage[utf8]{inputenc}
\usepackage{amsmath}
\usepackage{amssymb}
\usepackage{amsthm}
\usepackage{dsfont}
\usepackage{mathtools}
\usepackage{enumerate}
\usepackage{mathrsfs} \usepackage{color} 	
\usepackage[backend=biber, url=false,isbn=false, doi=false, bibstyle=numeric, giveninits=true, date=year,  maxbibnames=9]{biblatex}
\usepackage{bbm}
\addbibresource{main.bib}

% ****** Farben ******
\usepackage{xcolor}
\usepackage{graphicx}
\usepackage[normalem]{ulem}

\colorlet{colorCE}{red}
\colorlet{colorAL}{blue!50}

   % displays text in blue color
   % displays -//- (crossed)
   % displays -//- (crossed)

   % displays text in blue color
   % displays -//- (crossed)
   % displays -//- (crossed)

\usepackage[colorinlistoftodos,prependcaption,textsize=small]{todonotes} % for editing
\colorlet{colorAL}{blue!50}
\colorlet{colorCE}{red!50}

%**** Farben Ende ******

\usepackage{geometry}
\geometry{verbose,a4paper,tmargin=25mm,bmargin=25mm,lmargin=25mm,rmargin=30mm}

\setlength\parindent{0pt}

\newtheorem{theorem}{Theorem}[section]
\newtheorem{definition}[theorem]{Definition}

\newtheorem{proposition}[theorem]{Proposition}
\newtheorem{lemma}[theorem]{Lemma}
\theoremstyle{definition}
\newtheorem{remark}[theorem]{Remark}

%%% --- C U S T O M    C O M M A N D S ---

\newcommand{\R}{\mathbb{R}}
\newcommand{\C}{\mathbb{C}}

\newcommand{\T}{\mathbb{T}}

%%\newcommand{\bldE}{\mathbf{E}}

%\newcommand{\todo}[1]{{\colorbox{red}{ TODO: #1 }}}

%%% --- D O C U M E N T ---
\begin{document}

\title{Operator models for meromorphic functions of bounded type}
\author{Christian Emmel}
\maketitle
\begin{abstract}
In this article, we construct operator models on Krein spaces for meromorphic functions of bounded type. This construction is based on certain reproducing kernel Hilbert spaces which are closely related to model spaces. Specifically, we  show that each function of bounded type corresponds naturally to a pair of such spaces, which extends Helson's representation theorem. This correspondence enables an explicit construction of our model, where the Krein space is a suitable sum of these identified spaces. Additionally, we establish that the representing self-adjoint relations possess a relatively simple structure, since they turn out to be partially fundamentally reducible. Conversely, we show that realizations involving such relations correspond to functions of bounded type.
\color{black}
\end{abstract}
\section{Introduction}
Quotients of bounded analytic functions, known as meromorphic functions of bounded type, were initially studied  by Nevanlinna over a century ago \cite{Nevanlinna1925}. Since then, they have gained significant importance in function theory, mostly because of  their crucial role in the theory of Hardy spaces \cite{Rosenblum1994tih}. These days, they constitute a fundamental function class in complex analysis and have applications in many different fields, for example in systems theory (see e.g. \cite{Inouye1986}) and arithmetic geometry (see e.g. \cite{Fritz}).
\par\smallskip
Our main result establishes that functions of bounded type can be represented by \textit{operator models} on Krein spaces - indefinite inner product spaces  which decompose into an orthogonal sum of a Hilbert space and an anti-Hilbert space. 
Specifically, for a given function  $q \vcentcolon U \subset \C^+ \rightarrow \C$ of bounded type on the upper half-plane $\C^+$, we construct  self-adjoint relation $A$ in  a  Krein space $\mathcal{K}$ such that
%An operator model of $q$ consists of a  Krein space $\mathcal{K}$, a  self-adjoint relation $A$ on $\mathcal{K}$, and a generating vector $v \in \mathcal{K}$, which relate to  $q$ via the representation
\begin{gather}
U \subset\varrho(A) \quad \text{ and} \quad 
q(\zeta)= \overline{q(\zeta_0)}+(\zeta- \overline{\zeta_0})  \left[\left(I+(\zeta-\zeta_0)(A-\zeta)^{-1}\right) v, v\right]_{\mathcal K}, \quad \forall \zeta \in U \label{For:Intr1} \\
\overline{\mathrm{span}} \{ \left(I+(\zeta-\zeta_0)(A-\zeta)^{-1}\right) v  \vcentcolon \zeta \in \varrho(A) \}= \mathcal K, \label{For:Intr2}
\end{gather}
where $\zeta_0 \in U$ is some fixed base point and $v \in \mathcal{K}$ a suitable generating vector. 
 Such an operator model establishes a strong connection between the properties of the relation $A$ and the function  $q$, for which the minimality condition  \eqref{For:Intr2} is essential. 
\par\smallskip
Operator models have been an active field of research  for at least half a century \cite{KreinKac}. The simplest example  is given by the classical integral  representation for Herglotz-Nevanlinna functions, which are analytic functions that  map $\C^+$ into  $\overline{\C^+}$.   Any such function $q$ can be expressed as
\begin{equation}\label{For:Einleitung3}
q(\zeta)=a+b  \zeta +\int_\R \left( \frac{1}{t-\zeta}-\frac{t}{1+t^2} \right)\mathrm{d}\nu(t)\quad \forall \zeta\in\mathbb \C^+,
\end{equation}
where $a\in \R $, $b \in \R_{\geq 0}$ and $\nu$ is a positive measure satisfying a certain regularity condition. 
This representation is, if suitably interpreted, a minimal realization as defined above. Indeed, if we assume $b=0$ for simplicity and choose $\mathcal{K}=L^2(\nu)$, $A$ to be the multiplication operator by the independent variable and choose $v$ appropriately, then \eqref{For:Intr1} simplifies to \eqref{For:Einleitung3}. In this case, the spectrum of $A$ is given by  
\begin{align*}
    \sigma(A)=\overline{\left\{ x\in \R
    \vcentcolon 0<\liminf_{\epsilon\downarrow 0} \mathrm{Im}(q(x+\mathrm{i} \epsilon)) \right\}},
\end{align*}
which illustrates the close connection between  $A$ and $q$. 
Finally, 
it turns out that all minimal realizations on Hilbert spaces are, up to an isomorphism, of this form and therefore correspond to  Herglotz-Nevanlinna functions. 
\par\smallskip
Consequently, in order to construct operator models for more general classes of functions, it is necessary to consider Krein spaces instead of Hilbert spaces.  This more general setup allowed for the construction of minimal realizations for a wide range of functions including  
generalized Nevanlinna functions \cite{article}, \cite{edsjsr.119452920031201}, \cite{DijksmaLangerLugerShondin04}, definitizable functions \cite{Jonas}, real quasi-Herglotz functions \cite{luger2019quasiherglotz} and atomic density functions \cite{me2}.   These functions have two things in common. First, they are of bounded type,  and second, they are all
closely related to Herglotz-Nevanlinna functions. For example, generalized Nevanlinna functions are products of the form $q\cdot h$, where $q$ is a Herglotz-Nevanlinna function and $h$ is a rational function satisfying a certain symmetry condition. Since the definition of ``bounded type'' is not related to Herglotz-Nevanlinna functions in any intrinsic way, we see that the result presented in this article is  significantly more general than the preceding ones. 
\par\smallskip
Finally, we  mention the  related result obtained in \cite{me}. There  it was shown that every function $q$ of bounded type has a \emph{non-minimal} realization. This means that condition \eqref{For:Intr1} is satisfied but not the minimality condition \eqref{For:Intr2}. However, the latter is essential to connect the function theoretic properties of $q$ with the operator theoretic properties of $A$.
\par\smallskip
Next we want to say a few words about the methods used in this article. Our construction is closely connected to the class of $\mathcal{B}(h)$-spaces that de Branges studied in a series of papers in the 1960's \cite{DeBranges}, \cite{DEBRANGES196544}. 
Here, the symbol $h$ is a meromorphic function on $\C \setminus \R$ which is  bounded by $1$ on $\C^+$ and satisfies $\overline{h(\zeta)}=\frac{1}{h(\overline{\zeta})}$. We denote the set of these functions by $\mathcal{S}_0$. The induced space  $\mathcal{B}(h)$ is then the
reproducing kernel Hilbert space generated by the kernel
\begin{align*} 
    s_h(z, w)=\frac{1-h(z)\overline{h(w)}}{- \mathrm{i}  (z - \overline{w})}. 
\end{align*}
\color{black}
An important feature of such a space is that the difference-quotient operator 
\begin{align*}
    D_w \vcentcolon \mathcal{B}(h) \rightarrow \mathcal{B}(h), \quad g \mapsto  \frac{g(\cdot)-g(w)}{\cdot-w}
\end{align*}
is bounded. Moreover, we  point out that $\mathcal{B}(h)$ is (essentially) a model space 
if $h_{|\C^+}$ is an inner function.
\par\smallskip
Now let  $f$ be a given function of bounded type, which we extend as usual  by $f(\overline{\zeta})=\overline{f(\zeta)}$ to the lower half plane $\C^-$. In a first step, we show that there exist two functions $h_1, h_2 \in \mathcal{S}_0$   
such that $f$ has a  representation of the form
\begin{align}\label{Helsonsrep}
    f = \mathrm{i}  \frac{h_2-h_1}{h_2+h_1} \quad \text{ with } \quad
    \mathcal{B}(h_1)  \cap \mathcal{B}(h_2)= \{ 0\}.
\end{align}
In a second step, we prove that  
\begin{align*}
    \mathcal{L}(f) \vcentcolon = \frac{1}{h_1+h_2} \cdot \bigg(\mathcal{B}(h_1)  [+] \big(- \mathcal{B}(h_2)\big)\bigg) 
\end{align*}
is a reproducing kernel Krein space with kernel 
\begin{align*}
      \quad N_f(\zeta, w)= \frac{f(\zeta)-\overline{f(w)}}{\zeta-\overline{w}}.
\end{align*}
In a third step, we show that the difference-quotient operator $D_w$
acts boundedly on this space. Interestingly,  this follows almost directly from Leibniz's formula. Indeed,  given $g\in \mathcal{B}(h_1)$ we calculate 
\begin{align*}
    D_w\left(\frac{g}{h_1+h_2}  \right)
    =\frac{1}{h_1+h_2}   D_w(g)+g(w)  D_w\left(\frac{1}{h_1+h_2}\right).
\end{align*}
Given that  point evaluations and $D_w$ are bounded on $ \mathcal{B}(h_1)$, it is sufficient to show that  $D_w\left(\frac{1}{h_1+h_2}\right)\in \mathcal{L}(f)$. This computation is straightforward, and with this result, we can now establish our operator model. Specifically, we define  $(A-w)^{-1}=D_w$ and  $v=N_f(\cdot, \zeta_0)$. 
\par\smallskip
There are a few points to note  here. First, we emphasize  that  
only elementary facts about Krein spaces are used  in our argument. Instead,  we construct the space $\mathcal{L}(f)$ and the relation $A$  in  very explicit ways using  well-studied function spaces and Leibniz's formula. 
In contrast,  all of the previous realization theorems rely heavily on deep results from operator theory on Krein spaces. 
\par\smallskip
Second, we draw attention to the fact that the representation provided in equation \eqref{Helsonsrep} extends Helson’s representation for functions with real boundary values \cite[Theorem 3.1.]{articleRealpositive}.  When $f$ is such a function, $(h_1)_{|\C^+}$ and $(h_2)_{|\C^+}$ become inner functions, and the associated spaces $\mathcal{B}(h_1)$ and $\mathcal{B}(h_2)$ are (essentially) model spaces. In order to establish this   representation, we show that two $\mathcal{B}(h)$-spaces intersect trivially if and only if the symbols are relatively prime.  It's important to note that $B(h)$-spaces are, in general, not isomorphic to de Branges-Rovnak spaces, where an analogous statement does not hold.  In any case, these results in their full generality appear to be new, and we believe that they hold independent interest. 
\par\smallskip
Lastly, our construction of $D_w$ reveals an interesting interplay between point evaluations, difference-quotients and products of functions. We use this interplay to analyse the structure of $A$.  Specifically, it is shown that there exists a fundamental decomposition $\mathcal{L}(f)=\mathcal{K}_+ [+] \mathcal{K}_-$ such that the relations
\begin{align*}
    S_+ \vcentcolon = A \cap (\mathcal{K}_+ \times \mathcal{K}_+) 
    \quad \text{ and } \quad 
    S_- \vcentcolon = A \cap (\mathcal{K}_- \times \mathcal{K}_-) 
\end{align*}
are closed simple symmetric operators with deficiency index $(1,1)$.  Conversely, we   show that a realization involving a relation with such a decomposition property corresponds to a function of bounded type.  Relations satisfying such a decomposition property are called partially fundamentally reducible and have been extensively studied in the past (see e.g. \cite{article36278}, \cite{article43879} and \cite{article55}). 
It is important to note that these relations are relatively ``close'' to self-adjoint relations on Hilbert spaces. More precisely, let $A$ be a partially fundamentally reducible relation in a \emph{Krein space} $\mathcal{K}=\mathcal{K}_+ [+] \mathcal{K}_-$.  Then there exists a self-adjoint relation $A_0$ in the \emph{Hilbert space} $ \mathcal{K}_+ \oplus (- \mathcal{K}_-)$ such that the resolvent of $A$ is  a one-dimensional perturbation of the resolvent of $A_0$ (see \cite[Theorem 4.7]{article55}). 
\color{black}
\par\smallskip
This article is structured as follows. Since we use quite advanced results from various fields of complex analysis and operator theory, we decided to include an extensive preliminary section, that is Section 2. While the content of this section isn't original in a strict sense, it is important for the understanding of this article as we present some well-known concepts in a new light.  
In Section 3, we construct minimal realizations based on the representation given by \eqref{Helsonsrep}, with its proof to be presented in Section 4. 
Moving forward, in Section 5 we study  the constructed relations and in Section 6 we discuss some illustrative examples. 
\color{black}
\par\smallskip
Finally, we mention that the idea of using suitable reproducing kernel function spaces as models for (generalized) Herglotz-Nevanlinna functions is quite old, see for example \cite{DeBranges}, \cite{MR1318517}, 
\cite{MR1115445}, \cite{ea00d92c00704a44bedf537e4048aecf}, and also \cite{edsjsr.119452920031201}. 
\color{black}
In addition, we also mention that the ideas developed in \cite{Alpay1} had a great influence on this article.
There the author studied a closely related question in the context of inverse scattering and gave some partial answers. The methods developed in this article allow us to solve that problem in full generality as well (see Section \ref{GenBhspaces} for details).

\section{Preliminaries}
\subsection{Krein spaces and operator models}
This section aims to formally introduce operator models. We begin by discussing fundamental concepts from the theory of Krein spaces, starting with their definition \cite[Section 1.4]{gheondea_2022}: 
\begin{definition}
    Let $\mathcal K$ be a complex vector space and $[\cdot,\cdot]_{\mathcal{K}}$ a sesquilinear form on $\mathcal K$. Then $\mathcal K$ is a Krein space if $\mathcal{K}$ is decomposable into a direct orthogonal sum 
\begin{align*}
\mathcal{K} = \mathcal{K}_+ [+] \mathcal{K}_-,
\end{align*} 
such that $(\mathcal{K}_+, [\cdot,\cdot]_{\mathcal{K}})$ and $-\mathcal{K}_-\vcentcolon = (\mathcal{K}_-, -[\cdot,\cdot]_{\mathcal{K}})$ are Hilbert spaces. 
\end{definition}
We equip a Krein space $\mathcal K$ with the Hilbert space topology of $\mathcal{K}_+ \oplus (-\mathcal{K}_-)$, which is independent of the choice of the decomposition (see e.g. \cite[Section 1.4]{gheondea_2022}). The  space of continuous linear operators on $\mathcal K$ is  then denoted  by $\mathscr{L} (\mathcal{K})$. 
\par\smallskip
Realizations are closely related to self-adjoint relations on Krein spaces.  
Here, a relation  is a linear subspace $A \subset \mathcal{K} \times \mathcal{K}$, its adjoint $A^+$ is defined as
\begin{align*}
    A^+=\{(u,v) \in \mathcal{K}\times \mathcal{K}\vcentcolon \forall (x,y) \in A \vcentcolon [u,y]_{\mathcal{K}}=[v,x]_{\mathcal{K}} \},
\end{align*}
and $A$ is self-adjoint  if $A=A^+$. We refer to \cite{DijksmaDeSnoo87} for a comprehensive introduction to the theory of self-adjoint relations in the indefinite setting. 
Finally,
realizations are  defined as follows \cite{article}:
\begin{definition}\label{Def:PAN1}
Let  $q \vcentcolon  U \subset \C  \rightarrow \C $ be an analytic function. Then $q$ admits a realization on a Krein space $\mathcal K$ if there exists 
\begin{itemize}
    \item a self-adjoint relation $A$ in $\mathcal K$ with non-empty resolvent set
    \item a base point $\zeta_0 \in \varrho(A)\cap U $ and a generating vector $v \in \mathcal{K}$
\end{itemize}
such that 
\begin{align}\label{For:PAN1}
q(\zeta)= \overline{q(\zeta_0)}+(\zeta- \overline{\zeta_0}) \left[\left(I+(\zeta-\zeta_0)(A-\zeta)^{-1}\right) v, v\right]_{\mathcal K} \quad \forall \zeta \in \varrho(A)\cap U.
\end{align}
In this case, the triple $(\mathcal{K}, A, v)$ is called a realization of $q$. The realization is called minimal, if 
\begin{align*}
    \overline{\mathrm{span}} \{ (I+(\zeta-\zeta_0)(A-\zeta)^{-1}) v \vcentcolon \zeta \in \varrho(A) \}= \mathcal K.
\end{align*}
A minimal realization is also called operator model. 
\end{definition}
Since $A$ is self-adjoint,  the resolvent set $\varrho(A)$ is symmetric with respect to $\R$, and $q$ satisfies $q(\overline{\zeta})=\overline{q(\zeta)}$ whenever both $\zeta$ and $\overline{\zeta}$ are in $ \varrho(A)\cap U$. Thus,  if $q$ is defined on a subset $U \subset \C^+$, then a realization prescribes an extension to a subset of the lower half plane $\C^-$.
\par\smallskip
Every 
realization induces a so called
\textit{defect function}, which is defined as follows (see e.g. \cite{2015ot}): 
\begin{definition} \label{Def:defect}
    Let $q \vcentcolon  U \subset \C  \rightarrow \C$ be an analytic function and $(\mathcal{K},A,v)$ a realization.  The  defect function $\phi$ is defined as 
\begin{align*}
  \phi \vcentcolon \varrho(A) \rightarrow \mathcal{K}, \quad 
    \phi(\zeta)= \left(I+(\zeta-\zeta_0)(A-\zeta)^{-1}\right) v. 
\end{align*}
    Using the resolvent identity, it can be shown that for all $\zeta, w \in \varrho(A)$ it holds that
\begin{align}\label{For:DefectFunction}
\begin{split}
    &\phi(\zeta)= (I+(\zeta-w)(A-\zeta)^{-1})\phi(w)
     \\ \text{ and} \quad  
&[\phi(\zeta),\phi(w)]_{\mathcal{K}}=\frac{q(\zeta)-\overline{q(w)}}{\zeta-\overline{w}}, \  \zeta \neq \overline w , \quad
[\phi(w),\phi(\overline{w})]_{\mathcal{K}}=q'(w), 
\end{split}
\end{align}
where we extend $q$ to the whole of $\varrho(A)$ by the right hand side of \eqref{For:PAN1} if necessary.
Note that \eqref{For:DefectFunction} means that $(\mathcal{K},A,\phi(w))$ is a realization of $q$ with base point $w$. We occasionally write $(\mathcal{K},A,\phi)$ instead of $(\mathcal{K},A,v)$ if we want to suppress the dependence on the base point $\zeta_0$.
\end{definition}
Finally, two realizations $(\mathcal{K}_i,A_i,v_i), \ i \in \{1,2\}$ of the same function $q$ are isomorphic if there exists a bounded unitary operator $F \vcentcolon  \mathcal{K}_1 \rightarrow \mathcal{K}_2$ such that
$F A_1 F^{-1} =A_2$ and $F(\phi_1(\zeta))=\phi_2(\zeta)$ for all $\zeta \in \varrho(A_1)=\varrho(A_2)$.

\subsection{Functions of bounded type} \label{Sec:BoundedType}
This section focuses on meromorphic functions of bounded type and their fundamental properties. In order to discuss them in a meaningful way, we begin by reviewing the Hardy spaces  $\mathcal{H}^2(\C^+)$ and $\mathcal{H}^\infty(\C^+)$ along with inner and outer functions. A comprehensive introduction to their theory is given in \cite[Section 5]{Rosenblum1994tih}, where  all of the upcoming definitions and results can be found. 
\par\smallskip
Throughout this article, 
we denote by $\lambda$ the Lebesgue measure on $\R$ and by $\mathcal{O}(U)$ the space of analytic functions defined on an open set $U\subset \C$. The Hardy spaces  $\mathcal{H}^2(\C^+)$ and $\mathcal{H}^\infty(\C^+)$ are then defined as
\begin{align*}
\mathcal{H}^2(\C^+) \vcentcolon  &=  \left\{f  \in \mathcal{O}(\C^+) \vcentcolon \|f\|_{\mathcal{H}^2(\C^+)} \vcentcolon = 
\sup_{0<y < \infty}\left(\int_{\R} |f(x+iy)|^2 \mathrm{d} \lambda(x) \right) ^{\frac{1}{2}} < \infty\right\}
 \\
\mathcal{H}^\infty(\C^+) \vcentcolon &=  \left\{f \in \mathcal{O}(\C^+) \vcentcolon \|f\|_{\mathcal{H}^\infty(\C^+)} \vcentcolon = \sup_{\zeta\in \C^+} |f(\zeta)| < \infty\right\}.
\end{align*}
Both $\mathcal{H}^2(\C^+)$ and  $\mathcal{H}^\infty(\C^+)$ are complete. 
Another important property  is the existence of boundary values. More precisely, for any given  $f \in \mathcal{H}^2(\C^+)$ ($f \in  \mathcal{H}^\infty(\C^+)$)   the limit 
\begin{align*}
    f(x) \vcentcolon =\lim_{\epsilon \downarrow 0} f(x+\mathrm{i}\epsilon) \text{ exists $\lambda$-a.e. }
\end{align*}
In fact, the same is true even for non-tangential boundary limits, but this will not be used in this article. 
Either way, this allows us to identify a function with its boundary function, which   defines an isometric embedding $\mathcal{H}^2(\C^+) \hookrightarrow L^2(\R,\lambda)$. Consequently,  $\mathcal{H}^2(\C^+)$ is even a Hilbert space. 
\par\smallskip
The elementary building blocks of functions in $\mathcal{H}^\infty(\C^+)$ are  inner and outer functions. A function $V\in \mathcal{H}^\infty(\C^+)$ is called inner, if it satisfies $\|V\|_{\mathcal{H}^\infty(\C^+)}\leq 1$ and $|V(\zeta)|=1$ $\lambda$-a.e. on $\R$.  An outer function is a function of the form 
\begin{align*}
F(\zeta) \vcentcolon = \exp \left( \frac{-\mathrm{i}}{\pi} \int_\R \left( \frac{1}{t-\zeta}-\frac{t}{1+t^2} \right)\log(|h(t)|)\mathrm{d}\lambda(t) \right),
\end{align*}
where $h$ is a function on $\R$ satisfying  $h \geq 0 \ \lambda$-a.e. and 
\begin{align*}
    \int_\R  \frac{h(t) \mathrm{d}\lambda(t)}{1+t^2}<\infty.
\end{align*}
It can be shown that $h$ is given by  $\log(|h(t)|)=|F(t)|$. Finally, for any given function $f \in \mathcal{H}^\infty(\C^+)$, there exists an inner function $V$ and an outer function $F$ such that  
\begin{align*}
    f = V \cdot F.
\end{align*}
This concludes our brief discussion of Hardy spaces, and we are ready to define meromorphic functions of bounded type and the Smirnov class: 
\begin{definition}
Let $U \subset \C^+$ be such that $\C^+ \setminus U$ consists of isolated points and let $f \vcentcolon U\subset \C^+ \rightarrow \C$ be a meromorphic function on $\C^+$. Then $f$ is of bounded type if there are two functions
$f_1, f_2 \in \mathcal{H}^\infty(\C^+)$ 
 such that 
\begin{align*}
f (\zeta) =\frac{f_1(\zeta)}{f_2(\zeta)} \quad \text{ for all } \zeta \in U. 
\end{align*}
We denote this class by $\mathcal{N}(\C^+)$. 
\par\smallskip
Moreover, if $f_2$ can be chosen to be an outer function, then $f$ is in the Smirnov class which we denote by $\mathcal{N}^+(\C^+)$. Note that in this case $f$ is analytic on $\C^+$, since an outer function does not have any zeros. 
\end{definition}
Note that $f_1$ and $f_2$ can be chosen to satisfy $\|f_1\|_{\mathcal{H}^\infty(\C^+)} \leq 1$ and $\|f_2\|_{\mathcal{H}^\infty(\C^+)} \leq 1$ by scaling them appropriately. 
Functions of bounded type inherit many properties from bounded analytic functions, for example the existence of boundary values.
\par\smallskip
Throughout this article, we will often encounter functions whose boundary functions are elements of  $L^2(\R)$. In that situation,  the following regularity result for the Smirnov class will be useful \cite[Theorem 5.23]{Rosenblum1994tih}: 
\begin{proposition}\label{Prop:Regres}
    Let $f \in \mathcal{N}^+(\C^+)$ and assume that its boundary function is an element of $L^2(\R)$. Then $f \in \mathcal{H}^2(\C^+)$.
\end{proposition}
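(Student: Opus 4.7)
The strategy is to reduce the $\mathcal{H}^2$-estimate to a Poisson majorization for $|f|^2$ coming from the Smirnov class structure, and then to invoke Jensen's inequality (or, equivalently, the convexity of the exponential) to pass from $\log|f|$ to $|f|^2$.

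First, I would unpack the hypothesis. Because $f \in \mathcal{N}^+(\C^+)$, we may write $f = f_1 / f_2$ with $f_1, f_2 \in \mathcal{H}^\infty(\C^+)$, $\|f_i\|_\infty \leq 1$, and $f_2$ outer. Applying the inner-outer factorization to $f_1$, this gives $f = V \cdot F_1 / f_2$, where $V$ is inner and $F_1, f_2$ are outer. Since $V$ is inner, $|V(\zeta)| \leq 1$ on $\C^+$, and hence for all $\zeta = x + iy \in \C^+$ one has $|f(\zeta)| \leq |F_1(\zeta)|/|f_2(\zeta)|$. The central property of outer functions is that $\log|F_1|$ and $\log|f_2|$ are reproduced in $\C^+$ by their boundary values via the Poisson kernel $P_y(t) = \frac{y}{\pi((t-x)^2+y^2)}$. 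Subtracting these two Poisson integrals yields
\begin{equation*}
\log|f(\zeta)| \;\leq\; \int_\R P_y(x-t)\,\log|f^*(t)|\,\mathrm{d}\lambda(t),
\end{equation*}
where $f^*$ denotes the boundary function. (This is the key Smirnov-type maximum principle; the hypothesis $|f^*| \in L^2(\R) \subset L^1_{\rm loc}$ ensures that the boundary integrand $\log|f^*|/(1+t^2)$ is integrable, so all expressions are well defined.)

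The rest is a one-line computation. Multiplying by $2$ and exponentiating, Jensen's inequality applied to the convex function $\exp$ and to the probability measure $P_y(x-t)\,\mathrm{d}\lambda(t)$ gives
\begin{equation*}
|f(x+iy)|^2 \;\leq\; \exp\!\Bigl(\int_\R P_y(x-t)\,\log|f^*(t)|^2\,\mathrm{d}\lambda(t)\Bigr) \;\leq\; \int_\R P_y(x-t)\,|f^*(t)|^2\,\mathrm{d}\lambda(t).
\end{equation*}
Integrating this in $x$ and using Fubini together with $\int_\R P_y(x-t)\,\mathrm{d}\lambda(x) = 1$ for every $y>0$, one obtains
\begin{equation*}
\int_\R |f(x+iy)|^2 \,\mathrm{d}\lambda(x) \;\leq\; \|f^*\|_{L^2(\R)}^2,
\end{equation*}
uniformly in $y > 0$. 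Taking the supremum over $y$ shows $f \in \mathcal{H}^2(\C^+)$.

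The only genuine obstacle is the first step, namely the Poisson-majorant inequality for $\log|f|$. It is exactly here that the Smirnov hypothesis (outer denominator) is used: without it, a singular inner factor in $f_2$ would force a strict inequality $\log|f_2(\zeta)| < P_y * \log|f_2^*|$, and the whole argument would collapse. Granted this ingredient from the cited theory of outer functions, the remainder is routine via Jensen and Fubini.
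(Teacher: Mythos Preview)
The paper does not give its own proof of this statement; it simply cites it as \cite[Theorem 5.23]{Rosenblum1994tih}. Your argument is the standard textbook proof of this classical regularity result for the Smirnov class: Poisson representation of $\log|F|$ for the outer part, Jensen's inequality to pass from $\log|f|$ to $|f|^2$, and Fubini to get the uniform $L^2$-bound. It is correct, and it is precisely the kind of argument one finds in the cited reference.
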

We emphasize that the a priori assumption $f \in \mathcal{N}^+(\C^+)$ is essential and cannot be weakened. 
\par\smallskip
Functions of bounded type and the Smirnov class on the lower-half plane $\C^-$ are defined  analogously and denoted by $\mathcal{N}(\C^-)$ and $\mathcal{N}^+(\C^-)$ respectively. The spaces $\mathcal{N}(\C^+)$ and $\mathcal{N}(\C^-)$  are connected via the Schwartz reflection, which is the vector space isomorphism given by
\begin{align*}
    SR \vcentcolon \mathcal{N}(\C^+) \rightarrow \mathcal{N}(\C^-), \quad SR(f)(z)=\overline{f(\overline{z})}.
\end{align*}
Note that $SR$ maps $\mathcal{H}^2(\C^+)$ and $\mathcal{N}^+(\C^+)$ onto  $\mathcal{H}^2(\C^-)$ and $\mathcal{N}^+(\C^-)$ respectively, and preserves inner and outer functions. 
\par\smallskip
Now recall that a realization of a function $q$ defined on $\C^+$ prescribes an extension to $\C^-$ via  $q(\overline{\zeta})=\overline{q(\zeta)}$. In order to take this extension into account, we define the following function classes:
\begin{align*}
    \mathcal{N}(\C \setminus \R) \vcentcolon &= \{ f \text{ meromorphic on } \C \setminus \R \vcentcolon f_{|\C^+} \in \mathcal{N}(\C^+) \text{ and } 
    f_{|\C^-} \in \mathcal{N}(\C^-)\} \\
    \mathcal{N}^+(\C \setminus \R) \vcentcolon &= \{f \in \mathcal{N}(\C \setminus \R) \vcentcolon f_{|\C^+} \in \mathcal{N}^+(\C^+) \text{ and } f_{|\C^-} \in \mathcal{N}^+(\C^-)   \} \\
    \mathcal{N}_{sym} \vcentcolon &= \{f \in \mathcal{N}(\C \setminus \R) \vcentcolon f_{|\C^-} = SR(f_{|\C^+})   \}.
\end{align*} 
Taking the lower-half plane $\C^-$ into account may not seem  important at  first glance. However, this rather subtle difference turns out to be significant for reasons that will become apparent in Section \ref{Sec:Bh spaces}.  
\par\smallskip
We will often consider the Cayley transform of  functions in  $\mathcal{N}_{sym} $, which is defined as 
\begin{align}\label{For:Cayley}
    C \vcentcolon \mathcal{N}_{sym} \setminus \{ f \in  \mathcal{N}_{sym} \vcentcolon f_{|\C^+}\equiv \mathrm{i} \text{ or } f_{|\C^+}\equiv -\mathrm{i} \} \rightarrow \mathcal{N}(\C\setminus \R), \quad f \mapsto C(f)= \frac{1+\mathrm{i}f}{1-\mathrm{i}f}.
\end{align}
We collect some properties in the next Proposition.
\begin{proposition}\label{Prop:Cayley}
The Cayley transform $C$ is injective and its inverse is given by
\begin{align}\label{For824039}
    f=C^{-1}(C(f))= \mathrm{i}  \frac{1- C(f)}{1+C(f)}.
\end{align}
Moreover, the symmetry relation 
$f_{|\C^-}=SR(f_{|\C^+})$ transforms to $C(f)_{|\C^-}=\frac{1}{SR(C(f)_{|\C^+})}$.
\end{proposition}
\begin{proof}
It is straightforward to check that the inverse of $C$ is indeed given by \eqref{For824039}. Moreover, we calculate for $ \zeta \in \C^-$ 
\begin{align*}
     C(f)_{|\C^-}(\zeta)
      =\frac{1+\mathrm{i} f_{|\C^-}(\zeta)}{1-\mathrm{i} f_{|\C^-}(\zeta)}
     = \frac{1+\mathrm{i}  \overline{f_{|\C^+}(\overline{\zeta})}}{1-\mathrm{i} \overline{f_{|\C^+}(\overline{\zeta})}} 
     = \overline{\left(\frac{1-\mathrm{i}  f_{|\C^+}(\overline{\zeta})}{1+\mathrm{i} f_{|\C^+}(\overline{\zeta})}\right)}=\overline{\left(\frac{1}{C(f)_{|\C^+}(\overline{\zeta})}\right)},
\end{align*}
where we have used   $f_{|\C^-}(\zeta)=\overline{f_{|\C^+}(\overline{\zeta})}$ in the second step. This concludes the proof. 
\end{proof}
\color{black}

\par\smallskip
Finally, let $q$ be a generalized Nevanlinna function and $(\mathcal{K},A,v)$ be a minimal realization. Then it is well known \cite{2015ot}, that $q$ is analytically continuable through an open interval $(a,b)\subset \R$ if and only if $(a,b)\subset \varrho(A)$. This characterizes the resolvent set of $A$ completely in terms of the function theoretic properties of $q$. We want to prove an analogous result for functions in $\mathcal{N}_{sym}$. To this end, we give the following definition:
\begin{definition}
    Let $f \in  \mathcal{N}(\C \setminus \R)$ and $U(f)$ its domain. We define $U_{ext}(f)$ as the union of $U(f)$ and all $\zeta \in \R$ for which there exists an open interval  $\zeta\in (a,b)\subset \R$ such that $f$ can be analytically extended through $(a,b)$.
\end{definition}

\subsubsection{Calculus of inner functions}
Inner functions have attracted significant interest in recent decades, and their theory plays a crucial role in our construction of operator models. This is why we discuss them in detail now. As before, all  results and definitions can be found in \cite{Rosenblum1994tih} and \cite{2019hs}.
\color{black}
\par\smallskip
First, we recall that every inner function $V$ can be  decomposed into two conceptually different inner functions, namely into a Blaschke product $B$ and a singular part $S$ \cite[Theorem 5.13]{Rosenblum1994tih}. If $(\zeta_n)$ denotes the zeros of $V$ (counting multiplicities), then the Blaschke product $B$ is defined as  
\begin{align*}
B(\zeta) &\vcentcolon = \prod_{n} \exp(\mathrm{i}  \alpha_n)  \frac{\zeta-\zeta_n}{\zeta-\overline{\zeta_n}},
\ \text{ where } \
\alpha_n \in \R \text{ satisfies } \exp(\mathrm{i}  \alpha_n)  \frac{\mathrm{i}-\zeta_n}{\mathrm{i}-\overline{\zeta_n}} \geq 0.
\end{align*}
 Note that $B$ has the same zeros as $V$. Then $V$ can be written as 
\begin{align*}
V(\zeta) = \lambda  B(\zeta)  \exp \left( \frac{\mathrm{i}}{\pi} \int_\R \left( \frac{1}{t-\zeta}-\frac{t}{1+t^2} \right)\mathrm{d}\nu(t) \right)  \exp(\mathrm{i} \alpha \zeta),
\end{align*}
where $\lambda \in \T$, $\nu$ is a singular Borel measure satisfying $\int_\R \frac{\mathrm{d}\nu(t)}{1+t^2}<\infty$ and $\alpha \in \R_{\geq 0}$. An inner function given by such an exponential representation is called singular. 
\par\smallskip
The class of inner functions admits a multiplicative structure, which is defined as follows \cite[Definition 2.16]{garcia2016introduction}:
\color{black}
\begin{definition}
Let $V_1$ and $V_2$ be inner functions. Then we say that 
\begin{itemize}
    \item the function $V_1$ divides $V_2$ if there exists an inner function $V_3$ such that $V_2=V_1 \cdot V_3$.
    \item the functions  $V_1$ and $V_2$ are relatively prime,  if there exists no non-constant inner function which divides both $V_1$ and $V_2$.
\end{itemize}
\end{definition}
It is possible to characterize when $V_1$ and $V_2$ are relatively prime in terms of the zeros of the Blaschke product and the measure of the singular part \cite[Section 3.2.1]{2019hs}: 
\begin{proposition}
    Let $V_j=\lambda_j  B_j \cdot S_j \cdot \exp(\mathrm{i} \alpha_j \zeta) , \ j \in 
    \{1,2\}$ be two inner functions, where $\lambda_j \in \T$, $B_j$ is the Blaschke product and $S_j$ the singular part. Then $V_1$ and $V_2$ are relatively prime if  and only if 
    \begin{itemize}
        \item the Blaschke products have no common zeros,
        \item either $\alpha_1$ or $\alpha_2$ is zero,
        \item and the measures $\mu_1$ and $\mu_2$ representing $S_1$ and $S_2$ are mutually singular.
    \end{itemize}
\end{proposition}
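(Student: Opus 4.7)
The plan is to reduce the question to a componentwise analysis, relying on the standard principle (a direct consequence of the uniqueness of the canonical factorization) that if an inner function $W$ divides an inner function $V = c \cdot B \cdot S \cdot \exp(i \alpha \zeta)$ whose singular part is generated by the measure $\mu$, then the Blaschke factor of $W$ divides $B$, the exponent $\alpha_W$ of $W$'s exponential factor satisfies $\alpha_W \leq \alpha$, and the singular measure $\mu_W$ generating the singular part of $W$ is dominated by $\mu$. I would invoke this as the main tool throughout.

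For the ``if'' direction, assume the three conditions hold and let $W = c_W \cdot B_W \cdot S_W \cdot \exp(i \alpha_W \zeta)$ be any inner function dividing both $V_1$ and $V_2$. Then the zeros of $B_W$ are common to $B_1$ and $B_2$, so by the first hypothesis $B_W$ has no zeros and is a unimodular constant. The second hypothesis gives $\alpha_W \leq \min(\alpha_1, \alpha_2) = 0$, hence $\alpha_W = 0$. Finally, $\mu_W \leq \mu_1$ and $\mu_W \leq \mu_2$ combined with mutual singularity of $\mu_1$ and $\mu_2$ force $\mu_W = 0$, so $S_W$ is constant. Thus $W$ is constant, proving that $V_1$ and $V_2$ are relatively prime.

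For the converse, I would exhibit, whenever one of the three conditions fails, a non-constant common inner divisor. If $\zeta_0 \in \C^+$ is a common zero of $B_1$ and $B_2$, the single normalized Blaschke factor at $\zeta_0$ divides both $V_1$ and $V_2$. If both $\alpha_1, \alpha_2 > 0$, then $\exp\bigl(i \min(\alpha_1, \alpha_2) \zeta\bigr)$ divides both exponential factors. If $\mu_1$ and $\mu_2$ are not mutually singular, I would use the Lebesgue decomposition of $\mu_1$ with respect to $\mu_2$: writing $d\mu_1 = g \, d\mu_2 + d\mu_1^\perp$ with $\mu_1^\perp \perp \mu_2$, failure of mutual singularity implies that $g$ is non-zero on a set of positive $\mu_2$-measure, so the singular Borel measure $\nu$ defined by $d\nu = \min(g, 1) \, d\mu_2$ is non-zero and satisfies both $\nu \leq \mu_1$ and $\nu \leq \mu_2$; the singular inner function generated by $\nu$ is then the desired common divisor.

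The only step requiring a bit of care is the singular-measure construction in the last case, since one needs a non-zero measure dominated simultaneously by $\mu_1$ and $\mu_2$ rather than merely absolutely continuous with respect to one of them; elsewhere the argument is essentially bookkeeping once the componentwise divisibility principle is invoked, and I do not foresee a substantial obstacle beyond keeping track of the unimodular normalizing constants.
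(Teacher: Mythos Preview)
Your proof is correct. The paper does not prove this proposition at all; it is stated as a known result with a reference to \cite[Section 3.2.1]{2019hs}, so there is no ``paper's own proof'' to compare against. Your argument via the componentwise divisibility principle and the explicit construction of common divisors in each failure case is the standard route and is carried out cleanly.
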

Finally, we  discuss the boundary behaviour of inner functions. To this end, the notion of a pseudocontinuation is useful 
\cite[Section 2]{Douglas1970}:
\begin{definition}
Let $f\in \mathcal{N}(\C^+)$ and $g\in \mathcal{N}(\C^-)$. Then $f$ and $g$ are said to be pseudocontinuations of each other if their boundary functions coincide $\lambda$-a.e.
\end{definition}
Given an inner function $V$, it is straightforward  to verify that  $\frac{1}{SR(V)}$ is a pseudocontinuation of $V$. The following important regularity result is an immediate consequence of Proposition \ref{Prop:Regres}.
\begin{proposition}\label{Prop:RegRes}
    Let $f \in \mathcal{H}^2(\C^+), g \in \mathcal{N}(\C^-)$ and assume that $f$ and $g$ are pseudocontinuations of each other.  Then either $f\equiv 0$ and $ g \equiv 0$ or $ g \notin \mathcal{N}^+(\C^-)$. 
\end{proposition}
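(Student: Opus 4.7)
The plan is to prove the contrapositive: assuming $g \in \mathcal{N}^+(\C^-)$, I would show that $f \equiv g \equiv 0$. The underlying intuition is that a non-trivial element of the Smirnov class on $\C^-$ cannot have a Hardy-space pseudocontinuation to $\C^+$, because functions in $\mathcal{H}^2(\C^+)$ and $\mathcal{H}^2(\C^-)$ that share boundary values must both vanish.

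First, I would upgrade the regularity of $g$. Since $f \in \mathcal{H}^2(\C^+)$, its boundary function lies in $L^2(\R)$, and the pseudocontinuation hypothesis states that the boundary function of $g$ coincides with that of $f$ almost everywhere. Hence the boundary function of $g$ is also in $L^2(\R)$. Combined with the assumption $g \in \mathcal{N}^+(\C^-)$, the analogue of Proposition \ref{Prop:Regres} for the lower half-plane (obtained by applying the Schwartz reflection $SR$, which maps $\mathcal{N}^+(\C^-)$ onto $\mathcal{N}^+(\C^+)$ and $\mathcal{H}^2(\C^-)$ onto $\mathcal{H}^2(\C^+)$) yields $g \in \mathcal{H}^2(\C^-)$.

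Second, I would invoke the classical fact that a function in $\mathcal{H}^2(\C^+)$ and a function in $\mathcal{H}^2(\C^-)$ with identical boundary values must both vanish. The cleanest way is via the Cauchy integral: for any $\zeta \in \C^+$, the reproducing property of $\mathcal{H}^2(\C^+)$ gives
\begin{equation*}
f(\zeta) = \frac{1}{2\pi i}\int_\R \frac{f(x)}{x-\zeta}\, d\lambda(x),
\end{equation*}
while for $g \in \mathcal{H}^2(\C^-)$ and the same $\zeta \in \C^+$, contour considerations (or the Paley--Wiener description $\mathcal{H}^2(\C^\pm) = \mathcal{F}(L^2(\R_{\pm}))$, under which $\mathcal{H}^2(\C^+)$ and $\mathcal{H}^2(\C^-)$ appear as orthogonal complements in $L^2(\R)$) give
\begin{equation*}
\frac{1}{2\pi i}\int_\R \frac{g(x)}{x-\zeta}\, d\lambda(x) = 0.
\end{equation*}
Since the boundary values of $f$ and $g$ agree $\lambda$-a.e., the two integrals coincide, forcing $f(\zeta) = 0$ for every $\zeta \in \C^+$. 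Hence $f \equiv 0$, and therefore $g$ has vanishing boundary values, which by its membership in $\mathcal{H}^2(\C^-)$ forces $g \equiv 0$ as well.

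The only subtlety, and the main obstacle worth being careful about, is the first step: one must invoke Proposition \ref{Prop:Regres} on the lower half-plane, which requires a clean reference to (or statement of) the Smirnov regularity result on $\C^-$. Since the paper already notes that $SR$ preserves the relevant classes, this transfer is essentially automatic, and the remaining work reduces to the classical $\mathcal{H}^2(\C^+) \cap \mathcal{H}^2(\C^-) = \{0\}$ argument sketched above.
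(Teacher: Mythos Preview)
Your proposal is correct and follows essentially the same route as the paper: both argue by contrapositive, upgrade $g$ to $\mathcal{H}^2(\C^-)$ via the Smirnov regularity result (Proposition~\ref{Prop:Regres}), and then invoke the disjointness of $\mathcal{H}^2(\C^+)$ and $\mathcal{H}^2(\C^-)$ at the level of boundary values. The only cosmetic difference is in that last step: the paper reflects $g$ to obtain $SR(g)\in\mathcal{H}^2(\C^+)$ with boundary values $\overline{f(x)}$ and cites the fact that two $\mathcal{H}^2(\C^+)$ functions which are conjugate on $\R$ must vanish, whereas you work directly with $f\in\mathcal{H}^2(\C^+)$ and $g\in\mathcal{H}^2(\C^-)$ via the Cauchy integral---these are equivalent formulations of the same classical fact.
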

\color{black}
\begin{proof}
The boundary function of $g$ coincides by assumption with the one of  $f$ and is therefore in  $L^2(\R)$. Consequently, if we assume that $ g \in \mathcal{N}^+(\C^-)$  then $g \in \mathcal{H}^2(\C^-)$ by Proposition \ref{Prop:Regres}. In this case,  also the Schwartz reflection $SR(g)$ is an element of  $\mathcal{H}^2(\C^+)$. Moreover,  it holds that
\begin{align*}
    SR(g)(x) = \overline{g(\overline{x})}=\overline{g(x)}= \overline{f(x)} \quad \text{ for  $\lambda$ almost all $x$ in  $\R$. }
\end{align*}
    However,  there is only one function  $h \in \mathcal{H}^2(\C^+)$ such that there exists a function $m \in \mathcal{H}^2(\C^+)$ satisfying 
    $h(x)=\overline{m(x)} for  \ \lambda$-a.e. $x \in \R$, and that is  the constant zero function \cite[Page 112]{Rosenblum1994tih}.
    Therefore, we arrive at  $f\equiv 0$ and $SR(g)\equiv 0$, which concludes the proof. 
\end{proof}
Finally, we can also characterize when the pseudocontinuation is even an analytic continuation \cite[Theorem 3.2.3]{2019hs}: 
\begin{proposition}\label{Prop:AnaExt}
    Let $V= \lambda  B \cdot S \cdot \exp(\mathrm{i} \alpha \zeta)$ be an inner function and $\mu_s$ the measure representing the singular part. Then $V$ can be analytically continued through an interval $(a,b)\subset \R$ if and only if 
    \begin{itemize}
    \item
        the zeros of the Blaschke product do not have an accumulation point in $(a,b)$
    \item 
        the interval $(a,b)$ does not intersect the topological support of $\mu_{s}$. 
    \end{itemize}
    In this case, the analytic continuation coincides with the pseudocontinuation. 
\end{proposition}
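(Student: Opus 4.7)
The plan is to treat the two directions of the equivalence separately, using the multiplicative decomposition $V=\lambda\cdot B\cdot S\cdot\exp(i\alpha\zeta)$ and analyzing the analytic-continuability of each factor. The constant $\lambda$ and the entire function $\exp(i\alpha\zeta)$ contribute nothing, so the content lies in the Blaschke part $B$ and the singular inner function $S$. Throughout, I will exploit the fact that if an analytic continuation through $(a,b)$ exists at all, it is forced by uniqueness to agree on $\C^-$ (near $(a,b)$) with the pseudocontinuation $1/SR(V)$, since two analytic functions that coincide on the open set $\C^+$ must coincide on the connected component of their common domain.

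For the ($\Leftarrow$) direction, I will show separately that $B$ and $S$ each admit analytic extensions through $(a,b)$ under the stated hypotheses, and then multiply. For the Blaschke part, since the zeros $(\zeta_n)$ and hence their conjugates $(\overline{\zeta_n})$ do not accumulate in $(a,b)$, standard Weierstrass-type convergence arguments for Blaschke products give a complex neighborhood of $(a,b)$ on which $B$ converges to a meromorphic function; the zeros/poles lie strictly in $\C^+\cup\C^-$, so the extension is actually analytic on $(a,b)$ itself. For the singular part, the defining integrand $\bigl(\tfrac{1}{t-\zeta}-\tfrac{t}{1+t^2}\bigr)$ is uniformly bounded in $t\in\mathrm{supp}(\mu_s)$ for $\zeta$ in any complex neighborhood of $(a,b)$ disjoint from $\mathrm{supp}(\mu_s)$, so differentiation under the integral shows the exponent, and hence $S$, is analytic there. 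The product of these extensions gives the analytic continuation of $V$, which must agree with the pseudocontinuation by the uniqueness remark above.

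For the ($\Rightarrow$) direction, assume $V$ extends analytically through $(a,b)$. For the Blaschke condition, a straightforward identity-theorem argument works: if the zeros $\zeta_n\in\C^+$ accumulated at some $x_0\in(a,b)$, then the extension would vanish on the set $\{\zeta_n\}$, which has $x_0$ as an accumulation point inside the domain of analyticity, forcing $V\equiv 0$, contradicting $|V|=1$ a.e.\ on $\R$. Having obtained the Blaschke condition, I invoke the $(\Leftarrow)$ direction for $B$ to conclude that $B$ extends analytically through $(a,b)$; then $V/B=\lambda\cdot S\cdot\exp(i\alpha\zeta)$ also extends analytically there (the denominator is nonvanishing on $(a,b)$). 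Taking a holomorphic logarithm of this nonvanishing extension, the Cauchy transform $C\mu_s(\zeta)=\int(t-\zeta)^{-1}d\mu_s(t)$ extends analytically through $(a,b)$ as well.

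The main obstacle is then purely measure-theoretic: to show that analyticity of $C\mu_s$ across $(a,b)$ forces $\mathrm{supp}(\mu_s)\cap(a,b)=\emptyset$. The key observation is that $\mathrm{Im}\,C\mu_s(x+iy)=\pi P[\mu_s](x,y)$ is the Poisson integral of $\mu_s$, so an analytic extension through a neighborhood of any $x_0\in(a,b)$ would make $P[\mu_s](\,\cdot\,,y)$ locally uniformly bounded as $y\downarrow 0$. However, a classical theorem for singular measures states that $P[\mu_s](x,y)\to+\infty$ for $\mu_s$-almost every $x$; if any point of $\mathrm{supp}(\mu_s)$ lay in the neighborhood, such blow-up points would lie in it as well, contradicting the boundedness. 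This rules out $\mathrm{supp}(\mu_s)\cap(a,b)\neq\emptyset$ and completes the proof, with the coincidence of analytic continuation and pseudocontinuation following again from uniqueness of analytic continuation.
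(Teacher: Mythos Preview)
The paper does not prove this proposition; it is stated in the preliminaries with a citation to \cite[Theorem 3.2.3]{2019hs} and no argument is given. So there is no ``paper's own proof'' to compare against.

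Your argument is correct and follows the standard route. A few minor remarks: for the $(\Rightarrow)$ direction you should note explicitly why $B$ is nonvanishing on $(a,b)$ (each Blaschke factor has modulus $1$ on $\R$, hence so does the locally uniform limit), and when you pass to the logarithm you should remark that this is legitimate on a simply connected neighborhood of $(a,b)$. The measure-theoretic step is the right one: $\mathrm{Im}\,C\mu_s(x+iy)=\int \frac{y}{(t-x)^2+y^2}\,d\mu_s(t)$ is $\pi$ times the Poisson integral of $\mu_s$, and the classical fact that the Poisson integral of a singular measure tends to $+\infty$ $\mu_s$-a.e.\ then forces $\mu_s((a,b))=0$, hence $\mathrm{supp}(\mu_s)\cap(a,b)=\emptyset$. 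The final uniqueness observation (any analytic continuation through $(a,b)$ must agree with the pseudocontinuation $1/SR(V)$ on $\C^-$ near $(a,b)$) is also correct.
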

Recall that the topological support of a positive measure $\mu$ is defined as the set
\begin{align*}
    \{x \in \R \vcentcolon \mu(O)>0 \text{ for every open neighbourhood $O$ of $x$} \}.
\end{align*}

\subsection{Modeling reproducing kernel spaces}
Minimal realizations are  closely connected to a specific class of reproducing kernel Krein spaces, which we will call ``modeling''. In what follows, we first discuss the definition and basic properties of reproducing kernel spaces in the indefinite setting, and then explain this connection in detail. A comprehensive introduction to reproducing kernel Krein spaces is given in \cite{2015otb}.
\begin{definition}\label{Def:RepKernelSpaces}
    Let $U \subset \C$ be an open set and $\mathcal K$ a Krein space consisting of analytic functions defined on $U$. Then $\mathcal K$ is a reproducing kernel Krein space on $U$ if there exists a function $ H \vcentcolon U \times U \rightarrow \C$
such that 
\begin{enumerate}
    \item for every $w \in U$ the function $\zeta \mapsto  H(\zeta,w)$ belongs to $\mathcal K$.
    \item for every $f \in \mathcal K$ and $w \in U$ it holds that $f(w)=[f(\cdot),H(\cdot,w)]_{\mathcal{K}}$.
\end{enumerate}
\end{definition}
We call the functions $H(\cdot,w)\in \mathcal{K}$ for $w \in U$ kernel functions. Note that they form a total set, i.e. it holds that 
\begin{align*}
    \overline{\mathrm{span}} \{H(\cdot,w) \vcentcolon w \in U  \}= \mathcal{K}.
\end{align*}
Indeed, any function $f \in \mathrm{span} 
\{H(\cdot,w) \vcentcolon w \in U  \}^\perp$ satisfies 
\begin{align*}
    f(w)=[f(\cdot),H(\cdot,w)]_{\mathcal{K}}=0 \quad \forall w \in U.
\end{align*}
Similar to the Hilbert space case, various operations can be applied to reproducing kernel Krein spaces
\cite[Section 5]{2016ait}. For this article, we focus specifically on two operations: sums and products. 
\begin{proposition}[Summation]
Let $\mathcal{K}_1$ and $\mathcal{K}_2$ be two reproducing kernel Krein spaces on $U$ and $H_1$ and $H_2$ their respective kernels. If it holds that
\begin{align*}
\mathcal{K}_1 \cap \mathcal{K}_2 =\{0\},
\end{align*}
then $\mathcal{K}_1 \oplus \mathcal{K}_2$ is a reproducing kernel Krein space with kernel 
$H_1+H_2$.
\end{proposition}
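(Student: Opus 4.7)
The plan is to first construct $\mathcal{K}_1 \oplus \mathcal{K}_2$ as an abstract Krein space, then use the hypothesis $\mathcal{K}_1 \cap \mathcal{K}_2 = \{0\}$ to realise it concretely as a space of analytic functions on $U$, and finally verify the reproducing property for $H_1+H_2$ by invoking orthogonality of the two summands.

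First, form the orthogonal direct sum $\mathcal{K}_1 [+] \mathcal{K}_2$ in the category of Krein spaces, with form $[(f_1,f_2),(g_1,g_2)] \vcentcolon = [f_1,g_1]_{\mathcal{K}_1} + [f_2,g_2]_{\mathcal{K}_2}$. This is a Krein space: combining fundamental decompositions of $\mathcal{K}_1$ and $\mathcal{K}_2$ yields one for the sum. Now define the linear map
\begin{align*}
  \Phi \vcentcolon \mathcal{K}_1 [+] \mathcal{K}_2 \rightarrow \mathcal{O}(U), \quad (f_1,f_2) \mapsto f_1 + f_2.
\end{align*}
The injectivity of $\Phi$ is precisely where the assumption enters: if $f_1+f_2 \equiv 0$ on $U$, then $f_1 = -f_2 \in \mathcal{K}_1 \cap \mathcal{K}_2 = \{0\}$. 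Hence the image $\mathcal{K} \vcentcolon = \Phi(\mathcal{K}_1 [+] \mathcal{K}_2)$ becomes a Krein space of analytic functions on $U$ once we transport the inner product via $\Phi$.

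Next, verify the reproducing properties for $H \vcentcolon = H_1+H_2$. For fixed $w \in U$, each $H_j(\cdot,w)$ lies in $\mathcal{K}_j$, so $H(\cdot,w) = \Phi(H_1(\cdot,w),H_2(\cdot,w)) \in \mathcal{K}$. For the reproducing identity, let $f \in \mathcal{K}$, write $f = f_1+f_2$ with $f_j \in \mathcal{K}_j$ (uniquely, by injectivity of $\Phi$), and compute using the orthogonal sum structure:
\begin{align*}
  [f, H(\cdot,w)]_{\mathcal{K}}
  &= [f_1, H_1(\cdot,w)]_{\mathcal{K}_1} + [f_2, H_2(\cdot,w)]_{\mathcal{K}_2}
   = f_1(w) + f_2(w) = f(w).
\end{align*}

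The only subtle point is the injectivity step; once that is in place, the computation is essentially forced by bilinearity and the orthogonality of the two summands in the direct sum. Note also that continuity of point evaluations on $\mathcal{K}$ is inherited from the individual $\mathcal{K}_j$, so no additional topological verification is required beyond the abstract Krein space structure produced by $\Phi$.
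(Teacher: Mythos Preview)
Your proof is correct and follows essentially the same approach as the paper: both use the trivial intersection to identify the abstract direct sum with a space of analytic functions and then verify the reproducing property via orthogonality of the summands. You simply spell out in detail what the paper dismisses as ``straightforward to check.''
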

\begin{proof}
    Since $\mathcal{K}_1 \cap \mathcal{K}_2 =\{0\}$, we can interpret elements in the direct sum $\mathcal{K}_1 \oplus \mathcal{K}_2  $ as analytic functions instead of  pairs of analytic functions. It is then straightforward to check that
    \begin{align*}
f(w)=
[f(\cdot),H_1(\cdot,w)+H_2(\cdot,w)]_{\mathcal{K}_1 \oplus \mathcal{K}_2}
    \end{align*}
    for $f \in \mathcal{K}_1 \oplus \mathcal{K}_2$ and $w \in U$. 
\end{proof}
\begin{proposition}[Products]\label{Prop:Conj}
Let $\mathcal{K}$ be a reproducing kernel Krein space on $U$ and kernel $H$. Moreover, let
$g\in \mathcal{O}(U)$ be an analytic function which is not identically zero on any component of $U$. 
Consider the vector space $g \cdot \mathcal{K} \subset \mathcal{O}(U)$ and the operator 
\begin{align*}
    M_g \vcentcolon \mathcal{K} \rightarrow g \cdot \mathcal{K}, \quad  f \mapsto g \cdot f,
\end{align*}
which is bijective by our assumption on $g$. We equip  $g \cdot \mathcal{K}$ with the inner product that turns  $M_g$ into an isomorphism of Krein spaces. Then $g \cdot \mathcal{K}$
is a reproducing kernel Krein space on $U$ with kernel 
\begin{align*}
    H_g(\zeta,w) \vcentcolon = g(\zeta)  H(\zeta,w)  \overline{g(w)}.
\end{align*}
\end{proposition}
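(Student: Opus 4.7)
The plan is to verify the two defining properties of a reproducing kernel Krein space directly, essentially by pushing the reproducing property of $H$ through the isomorphism $M_g$. Since $g$ is not identically zero on any connected component of $U$, the multiplication map $M_g$ is injective, so it is indeed a linear bijection between $\mathcal{K}$ and $g \cdot \mathcal{K}$. By construction, $g \cdot \mathcal{K}$ is then a Krein space whose inner product is defined by $[g f_1, g f_2]_{g \cdot \mathcal{K}} = [f_1, f_2]_{\mathcal{K}}$, and $M_g$ is unitary.

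Next I would check that the candidate kernel $K(\zeta, w) \vcentcolon = g(\zeta) H(\zeta, w) \overline{g(w)}$ belongs to $g \cdot \mathcal{K}$ for each fixed $w \in U$. Writing $K(\cdot, w) = g \cdot \bigl( \overline{g(w)} H(\cdot, w) \bigr)$ and noting that $\overline{g(w)}$ is just a scalar while $H(\cdot, w) \in \mathcal{K}$ by assumption, this is immediate.

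It remains to verify the reproducing property. For $F = g \cdot f \in g \cdot \mathcal{K}$ and $w \in U$, I would compute
\begin{align*}
[F, K(\cdot, w)]_{g \cdot \mathcal{K}}
&= [g \cdot f, \, g \cdot (\overline{g(w)} H(\cdot, w))]_{g \cdot \mathcal{K}} \\
&= [f, \, \overline{g(w)} H(\cdot, w)]_{\mathcal{K}} \\
&= g(w) \cdot [f, H(\cdot, w)]_{\mathcal{K}} \\
&= g(w) \cdot f(w) = F(w),
\end{align*}
where the second equality uses that $M_g$ is unitary and the third uses conjugate-linearity in the second slot of the sesquilinear form.

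There is no real obstacle here; the only thing to be careful about is the placement of the complex conjugation on $g(w)$, which has to appear on the kernel side (rather than inside $F$) precisely so that pulling the scalar $\overline{g(w)}$ out of the second argument produces $g(w)$ and reproduces the value $F(w) = g(w) f(w)$. The condition that $g$ is nonzero on each component is used only to ensure that $M_g$ is a bijection and hence transports the Krein space structure faithfully.
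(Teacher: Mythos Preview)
Your proof is correct and is precisely the standard argument the paper alludes to; the paper itself does not spell out the details but simply remarks that the usual Pontryagin-space proof (from Alpay's book) goes through unchanged, which is exactly the computation you carried out.
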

\begin{proof}
    The usual proof for the Pontryagin space case, which can be found in \cite[Theorem 1.5.7]{Alpay1997}, works exactly the same here.
\end{proof}
Finally, we  discuss a common way to deal with domain issues. Let $\mathcal{K}$ be a reproducing kernel Krein space on $U$ and $E$ a discrete set in $U$. Then we can ``forget'' about the points in the set $E$ by restricting the functions in $\mathcal{K}$ to the set $U \setminus E$. This is obviously a bijective operation, which means that we end up with an isomorphic reproducing kernel Krein space $\tilde{\mathcal{K}}$, whose kernel is the restriction of the original kernel $H$ to  $U \setminus E \times U \setminus E$. This is  especially convenient, when we want to consider multiplications of $\mathcal{K}$ by meromorphic functions on $U$. In this case, we just exclude the poles of said function from our original domain. 

\subsubsection{Modeling reproducing kernel spaces}
Operator models are closely related to reproducing kernel Krein spaces generated by the Nevanlinna kernel:
\begin{definition}
Let $f \in \mathcal{N}_{sym}$ and $U_{ext}(f)$ its extended domain.
The Nevanlinna kernel $N_f$ is then defined as
\begin{align*} 
N_f\vcentcolon U_{ext}(f) \times U_{ext}(f) \rightarrow \C, \quad 
N_f(\zeta,w)
:=\frac{f(\zeta)-\overline{f(w)}}{\zeta-\overline w}, \  \zeta \neq \overline w, \quad
N_f(w,\overline{w})=f'(w). 
\end{align*}
\end{definition}
Note that $N_f$ has already made an appearance in  Definition \ref{Def:defect}. The Nevanlinna kernel is closely related to the  difference-quotient operator which is defined as follows:
\begin{definition}\label{Def:diffquot}
Let $U \subset \C$ be an open set and $w \in U$.  The difference-quotient  operator $D_w$ is defined as
\begin{align*}
   D_w \vcentcolon \mathcal{O}(U) \rightarrow \mathcal{O}(U), \quad
     D_w(f)(\zeta) = \frac{f(\zeta)-f(w)}{\zeta-w},  \  \zeta \neq  w, \quad
D_w(f)(w)=f'(w). 
\end{align*}
It is straightforward to verify that the difference-quotient operator satisfies the following two identities:
\begin{itemize}
\item[(i)] The resolvent identity: $D_\lambda-D_\mu= (\mu-\lambda) D_\lambda D_\mu$, where $\lambda, \mu \in  U$.
\item[(ii)] The multiplication identity: $D_\lambda(f \cdot g)= f(\lambda)  D_\lambda(g) + g \cdot D_\lambda(f)  $, where $f,g \in  \mathcal{O}(U)$.
\end{itemize}
\end{definition}
These two concepts allow us to define the analog of a minimal realization in the realm of reproducing kernels:
\begin{definition}
    Let $f \in \mathcal{N}_{sym}$ and $U_{ext}(f)$ its extended domain. Moreover, let  $\mathcal{K}$ be a reproducing kernel Krein space on $\Omega \subset U_{ext}(f)$  with kernel $N_f$ , which is symmetric with respect to $\R$. Then $\mathcal{K}$ is called a modeling reproducing kernel space on $\Omega$ for $f$   if  
    \begin{align*}
        D_w \vcentcolon \mathcal{K} \rightarrow \mathcal{K}
    \end{align*}
    is well-defined and bounded for  every $w \in \Omega$. 
\end{definition}
Every modeling reproducing kernel space gives rise to a minimal realization of a particular form, which is  called canonical in the context of generalized Nevanlinna functions \cite{edsjsr.119452920031201}:
\begin{proposition}\label{Prop:Canreal}
    Let $f \in \mathcal{N}_{sym}$ and $\mathcal{K}$ be a modeling reproducing kernel space on $\Omega$  for $f$. Then there exists a self-adjoint relation $A$ such that $(A-w)^{-1}=D_w$ and $\Omega \subset \varrho(A)$. Moreover, 
    for fixed $w \in \Omega$ we can express $A$ as
    \begin{align}
    \begin{split}\label{FormA}
        A&=\{ (D_w(g),g+w D_w(g)) \in \mathcal{K} \times \mathcal{K}, \ g \in  \mathcal{K}   \} \\
        &= \{ (h,g)\in \mathcal{K} \times \mathcal{K} \vcentcolon \exists c \in \C \vcentcolon g(\zeta)-\zeta  h(\zeta)\equiv c \}.
    \end{split}
    \end{align}
    \color{black}
    Finally, the triple $(\mathcal{K},A,N_f(\cdot,\overline{\zeta_0}))$ is a minimal realization of $f$  with base point $\zeta_0\in \Omega$. 
    \end{proposition}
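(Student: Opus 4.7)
The plan is to define $A$ via the explicit formula, verify the two descriptions agree, read off $(A-w)^{-1}=D_w$ by a direct graph computation, establish self-adjointness by checking the Krein adjoint of $D_w$ on reproducing kernels, and finally confirm the realization identity using the reproducing property.

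I would take the second, manifestly $w$-free description $A = \{(h,g) \in \mathcal{K} \times \mathcal{K} : \exists c \in \C,\ g(\zeta) - \zeta h(\zeta) \equiv c\}$ as the working definition. To match it with the first, given such $(h,g)$, I set $g' := g - w h \in \mathcal{K}$; then $g'(\zeta) = c + (\zeta - w) h(\zeta)$, hence $g'(w)=c$ and $D_w(g') = h$, so $(h,g) = (D_w g',\, g' + w D_w g')$. The converse direction is a direct check using the multiplication identity of Definition \ref{Def:diffquot} applied to $g(\zeta) + w D_w g(\zeta) - \zeta D_w g(\zeta)$. Reading off $A - w = \{(D_w g',\, g') : g' \in \mathcal{K}\}$ and inverting pairs immediately yields $(A-w)^{-1} = D_w$ as a single-valued bounded operator on $\mathcal{K}$, hence $\Omega \subset \varrho(A)$.

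For self-adjointness, the adjoint calculus for relations in Krein spaces gives $(A^+ - \overline w)^{-1} = ((A-w)^{-1})^+ = D_w^+$, so it suffices to show $D_w^+ = D_{\overline w}$: since $\Omega$ is symmetric about $\R$, the right side equals $(A-\overline w)^{-1}$, forcing $A^+ = A$. By boundedness and density of reproducing kernels, I only need to check the identity $[D_w K_v, K_u]_{\mathcal{K}} = [K_v, D_{\overline w} K_u]_{\mathcal{K}}$ on kernel functions $K_u = N_f(\cdot, u)$. Using the reproducing property $[\cdot, K_x] = \mathrm{ev}_x$ together with $\overline{N_f(a,b)} = N_f(b,a)$ and $\overline{f(\overline c)} = f(c)$ (the latter from $f \in \mathcal{N}_{sym}$), both sides expand to rational expressions in $f(a), f(c), \overline{f(b)}$; after combining the numerators via the substitution $a - \overline b = (a-c)+(c-\overline b)$, they reduce to the same fraction with denominator $(a-c)(a-\overline b)(\overline b - c)$. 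This symbolic matching is where I expect the bulk of the technical bookkeeping to lie, though conceptually it is only a Leibniz-type manipulation of the kernel $N_f$.

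Finally, I verify the realization. Using $v = N_f(\cdot, \overline{\zeta_0})$ with base point $\zeta_0$, the defect function is $\phi(\zeta) = v + (\zeta - \zeta_0) D_\zeta v$, and a short manipulation of $v(\xi) = \frac{f(\xi) - f(\zeta_0)}{\xi - \zeta_0}$ simplifies this to $\phi(\zeta)(\xi) = N_f(\xi, \overline \zeta)$, i.e.\ the defect function is itself a kernel function. The reproducing property then yields $[\phi(\zeta), v]_{\mathcal{K}} = \phi(\zeta)(\overline{\zeta_0}) = N_f(\overline{\zeta_0}, \overline \zeta) = \frac{f(\zeta) - \overline{f(\zeta_0)}}{\zeta - \overline{\zeta_0}}$, which rearranges to \eqref{For:PAN1}. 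Minimality is immediate, since $\{\phi(\zeta) : \zeta \in \varrho(A)\}$ exhausts the reproducing kernels of $\mathcal{K}$, which form a total set by definition.
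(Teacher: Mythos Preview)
Your proof is correct and follows essentially the same route as the paper: both verify $D_w^+ = D_{\overline w}$ on reproducing kernels, compute the defect function as $\phi(\zeta)=N_f(\cdot,\overline\zeta)$, and deduce minimality from the density of kernels. The only structural difference is that the paper first invokes an abstract result (a cited proposition stating that a family satisfying the resolvent identity and $T(w)^+=T(\overline w)$ arises as the resolvent family of a self-adjoint relation) and then verifies the explicit form \eqref{FormA}, whereas you reverse the order by taking \eqref{FormA} as the definition and reading off $(A-w)^{-1}=D_w$ and $A^+=A$ directly; your version is thus slightly more self-contained.
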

    \begin{proof}
        Consider the operator valued 
        function 
        \begin{align*}
            T \vcentcolon \Omega \rightarrow \mathscr{L}(\mathcal{K}), \quad 
            w \mapsto D_w.
        \end{align*}
        The resolvent identity from Definition \ref{Def:diffquot} can be reformulated as
        \begin{align*}
            T(\lambda)-T(\mu)= (\lambda-\mu) T(\lambda) T(\mu).
        \end{align*}
        Moreover, a short calculation using the reproducing kernel property shows that 
       \begin{align*}
        [D_w (N_f(\cdot,z)),  N_f(\cdot,\zeta)]_{\mathcal{K}} &= 
        \left[N_f(\cdot,z),D_{\overline{w}} (N_f(\cdot,\zeta))\right]_{\mathcal{K}} \quad \forall z,\zeta \in \Omega.
    \end{align*} 
     Since the kernels are dense in $\mathcal{K}$, we conclude that $T(w)=T(\overline{w})^+$. Consequently, it follows that there exists a self-adjoint relation $A$ such that $\Omega \subset \varrho(A)$ and 
    \begin{align*}
         (A-w)^{-1}=T(w)=D_w,
    \end{align*}
    see \cite[Proposition 1.1.]{RN07186998219990101}. Finally, it is straightforward to verify that $A$ is of the form described in equation \eqref{FormA}.
    \par\smallskip
    Now pick any point $\zeta_0 \in \Omega$ and calculate 
    \begin{align}\label{For:Minimality}
        (I+(w-\zeta_0)(A-w)^{-1}) N_f(\cdot,\overline{\zeta_0})= N_f(\cdot,\overline{w}). 
    \end{align}
    Consequently, we conclude using the reproducing kernel property and $f(\overline{\zeta})=\overline{f(\zeta)}$ that 
    \begin{align*}
    [(I+(\zeta-\zeta_0)(A-\zeta)^{-1}) N_f(\cdot,\overline{\zeta_0}) , N_f(\cdot,\overline{\zeta_0})]_{\mathcal{K}(f)}
    &=[N_f(\cdot,\overline{\zeta}) , N_f(\cdot,\overline{\zeta_0})]_{\mathcal{K}(f)} \\
    &=N_f(\overline{\zeta_0},\overline{\zeta})
    = \frac{f(\zeta)-\overline{f(\zeta_0)}}{\zeta-\overline{\zeta_0}}.
    \end{align*}
    This means that $(\mathcal{K},A,N_f(\cdot,\overline{\zeta_0}))$ is indeed a realization of $f$  with base point $\zeta_0$. Finally, the kernel functions form a total set in $\mathcal{K}$, i.e. the following holds (see the remark after Definition \ref{Def:RepKernelSpaces}):
    \begin{align*}
    \overline{\mathrm{span}} \{N_f(\cdot,w) \vcentcolon w \in  \Omega \}= \mathcal{K}
    \end{align*}
    Therefore, equation \eqref{For:Minimality} ensures that this realization is also minimal. 
    \end{proof}
    \color{black}
Thus any modeling reproducing kernel space is associated to a minimal  realization.  Conversely, any minimal realization is also associated to a modeling reproducing kernel space \cite[Theorem 6.4]{me2}:
\begin{proposition}\label{Prop:RKSIso}
    Let $(\mathcal{K},A,v)$ be a minimal realization of $f$ with base point $\zeta_0 \in \varrho(A)$. Then $\mathcal{K}$ is isomorphic to a modeling reproducing kernel space $\tilde{\mathcal{K}}$ on  $\varrho(A)$. The isomorphism is given by 
    \begin{align*}
        F\vcentcolon \mathcal{K} \rightarrow \tilde{\mathcal{K}}, \quad x \mapsto F(x), \ F(x)(\zeta) \vcentcolon =[x,\phi(\overline{\zeta})]_{\mathcal{K}},
    \end{align*}
    where $\phi$ is the defect function of $(\mathcal{K},A,v)$.
\end{proposition}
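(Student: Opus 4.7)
The plan is to verify three things in sequence: that $F$ is a Krein space isomorphism onto a space of holomorphic functions on $\varrho(A)$, that this target space has $N_f$ as reproducing kernel, and that difference-quotient operators act boundedly on it. I would begin by noting that $\phi$ is holomorphic on $\varrho(A)$ and that the Krein form is conjugate-linear in the second slot, so $F(x)(\zeta)=[x,\phi(\overline\zeta)]_{\mathcal{K}}$ is holomorphic in $\zeta$. Injectivity follows from the minimality condition \eqref{For:Intr2}: if $F(x)\equiv 0$ then $x$ annihilates the dense set $\{\phi(\overline\zeta)\vcentcolon \zeta\in\varrho(A)\}=\{\phi(w)\vcentcolon w\in\varrho(A)\}$ (using that $\varrho(A)$ is symmetric about $\R$), whence $x=0$ by non-degeneracy of the inner product. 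Setting $\tilde{\mathcal{K}}:=F(\mathcal{K})$ with the transported form $[F(x),F(y)]_{\tilde{\mathcal{K}}}:=[x,y]_{\mathcal{K}}$ makes $F$ an isomorphism of Krein spaces by construction.

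Next I would identify the kernel by computing $F(\phi(\overline w))$. Combining the second identity in \eqref{For:DefectFunction} with the symmetry $f(\overline{\,\cdot\,})=\overline{f(\,\cdot\,)}$ inherited through the realization formula yields
\begin{align*}
F(\phi(\overline w))(\zeta)=[\phi(\overline w),\phi(\overline\zeta)]_{\mathcal{K}}=\frac{f(\zeta)-\overline{f(w)}}{\zeta-\overline w}=N_f(\zeta,w),
\end{align*}
so that $N_f(\cdot,w)\in\tilde{\mathcal{K}}$ and
$[F(x),N_f(\cdot,w)]_{\tilde{\mathcal{K}}}=[F(x),F(\phi(\overline w))]_{\tilde{\mathcal{K}}}=[x,\phi(\overline w)]_{\mathcal{K}}=F(x)(w)$.
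Hence $\tilde{\mathcal{K}}$ is a reproducing kernel Krein space with kernel $N_f$ and domain $\varrho(A)$.

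Finally, I would establish the modeling property by showing that $F$ conjugates the resolvent of $A$ into the difference-quotient operator. From self-adjointness one has $((A-w)^{-1})^{+}=(A-\overline w)^{-1}$, and the first identity in \eqref{For:DefectFunction} gives $(A-\overline w)^{-1}\phi(\overline\zeta)=(\phi(\overline w)-\phi(\overline\zeta))/(\overline w-\overline\zeta)$; a direct calculation then produces
\begin{align*}
F((A-w)^{-1}x)(\zeta)=[x,(A-\overline w)^{-1}\phi(\overline\zeta)]_{\mathcal{K}}=\frac{F(x)(\zeta)-F(x)(w)}{\zeta-w}=D_w F(x)(\zeta),
\end{align*}
with the apparent singularity at $\zeta=w$ removable by holomorphy. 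Since $(A-w)^{-1}\in\mathscr{L}(\mathcal{K})$ for $w\in\varrho(A)$ and $F$ is a topological isomorphism, $D_w$ is bounded on $\tilde{\mathcal{K}}$, so $\tilde{\mathcal{K}}$ is indeed a modeling reproducing kernel space. The only genuine obstacle is the systematic bookkeeping of complex conjugates, ensuring that the symmetry $f(\overline{\,\cdot\,})=\overline{f(\,\cdot\,)}$ is invoked precisely where the sesquilinearity of the form requires it; beyond this, everything is a direct manipulation of the two identities recorded in \eqref{For:DefectFunction}.
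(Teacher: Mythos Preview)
Your proof is correct and follows the standard route: transport the Krein structure along the injective map $F$, identify $F(\phi(\overline w))$ with $N_f(\cdot,w)$ via \eqref{For:DefectFunction} and the symmetry $f(\overline\zeta)=\overline{f(\zeta)}$, and then conjugate the resolvent into $D_w$ using the first identity in \eqref{For:DefectFunction}. The paper does not supply its own proof of this proposition but instead cites \cite[Theorem 6.4]{me2}; your argument is essentially the one found there, so there is nothing further to compare.
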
 
This concludes our reformulation of (minimal) realizations in terms of reproducing kernels. Working with modeling reproducing kernel spaces is advantageous as their definition does not involve linear relations.
Finally,  some words on the uniqueness of operator models are in order:
\begin{remark}
    Modeling reproducing kernel spaces, and in turn minimal realizations, are in general only weakly unique. Indeed, let  $\mathcal{K}_1(f)$ and $\mathcal{K}_2(f)$ be two such spaces which are defined on the same set $U$.  Then the identity operator 
    \begin{align*}
        \mathrm{Id} \vcentcolon \mathrm{span}
    \{ N_f(\cdot,w) \vcentcolon  w \in  U\}
    \subset
    \mathcal{K}_1(f) \rightarrow \mathcal{K}_2(f), \quad 
     N_f(\cdot,w_i) \mapsto
    N_f(\cdot,w_i),
    \end{align*}
    is isometric, but in general not bounded. An example illustrating this point can be found in \cite[Section 2]{Jonas}. However, it is important to note that minimal realizations are unique up to a bounded unitary operator if $\mathcal{K}$ is a Pontryagin space \cite[Remark 1.7]{article}. 
    \end{remark}

\subsection{Herglotz-Nevanlinna functions and  realizations on Hilbert spaces} \label{Sec:HN functions}
The goal of this section is to discuss  Herglotz-Nevanlinna functions and the corresponding operator models on Hilbert spaces, which will play a crucial role in  our construction. Their definition can be formulated in  multiple equivalent ways, as detailed  in \cite{edssjs.7395739519561201} and the survey \cite{LugerOu}: 
\begin{proposition}\label{Prop:Herglotz}
Let $q \vcentcolon \C\setminus \R \rightarrow \C$ be an analytic function satisfying $q(\overline{\zeta})=\overline{q(\zeta)}$. Then the following are equivalent:  
\begin{itemize}
\item[(A)] The function $q$ is a Herglotz-Nevanlinna function, i.e., ${\rm Im } \,q(\zeta)\geq 0$ for ${\rm Im }\,\zeta> 0$.
\item[(B)]
There exist numbers $a\in\mathbb R$ and $b\geq0$ and a positive Borel measure $\nu$ satisfying
\begin{align*}
    \int_\R \frac{\mathrm{d}\nu(t)}{1+t^2}<\infty
\end{align*}
such that
\begin{equation}\label{eq:integralrepresentation}
q(\zeta)=a+b  \zeta +\int_\R \left( \frac{1}{t-\zeta}-\frac{t}{1+t^2} \right)\mathrm{d}\nu(t)\quad \forall \zeta\in\mathbb \C\setminus \R. 
\end{equation}
\item[(C)]
The Nevanlinna kernel $N_f$ is positive definite. 
\end{itemize}
Moreover, if $q$ is a Herglotz-Nevanlinna function, then the absolutely continuous part of $\nu$ is given by 
\begin{align*}
       \mathrm{d}\nu_{abs}(t) = \frac{1}{\pi} \lim_{\epsilon \downarrow 0} \mathrm{Im}(q(t+\mathrm{i}\epsilon))  \mathrm{d} \lambda(t)= \frac{1}{2\mathrm{i}   \pi}  \left(\lim_{\epsilon \downarrow 0} q(t+\mathrm{i}\epsilon)-\lim_{\epsilon \downarrow 0} q(t-\mathrm{i}\epsilon)\right)
\end{align*}
Here, the first equality can be found in \cite[Theorem 3.27]{teschl2009mathematical} and the second follows from the fact that 
\begin{align*}
    \mathrm{Im}(q(t+\mathrm{i}\epsilon))=\frac{1}{2\mathrm{i}}\cdot(q(t+\mathrm{i}\epsilon)-q(t-\mathrm{i}\epsilon)).
\end{align*}
\end{proposition}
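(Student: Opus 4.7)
The plan is to prove the cycle $(B) \Rightarrow (C) \Rightarrow (A) \Rightarrow (B)$, which establishes the three-way equivalence; the formula for $\nu_{abs}$ then follows from $(B)$ via Stieltjes inversion. The first two implications are essentially algebraic, while $(A) \Rightarrow (B)$ is the substantive step and will be reduced to the Herglotz representation for non-negative harmonic functions on $\C^+$.

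For $(B) \Rightarrow (C)$, I would manipulate the representation \eqref{eq:integralrepresentation} directly. Subtracting $\overline{q(w)}$ from $q(\zeta)$ causes the constant $a$ and the compensating term $-t/(1+t^2)$ to cancel, leaving
\begin{align*}
q(\zeta) - \overline{q(w)} = b(\zeta - \overline{w}) + \int_\R \frac{\zeta - \overline{w}}{(t-\zeta)(t-\overline{w})} \, \mathrm{d}\nu(t).
\end{align*}
Dividing through by $\zeta - \overline{w}$ yields $N_q(\zeta,w) = b + \int_\R \mathrm{d}\nu(t)/((t-\zeta)(t-\overline{w}))$, which is manifestly positive definite as the sum of the constant $b \geq 0$ and an integral of the rank-one positive kernels $(\zeta, w) \mapsto (t-\zeta)^{-1}\overline{(t-w)^{-1}}$ against the positive measure $\nu$. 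The implication $(C) \Rightarrow (A)$ is even shorter: a direct computation using $\zeta - \overline{\zeta} = 2i\,\mathrm{Im}(\zeta)$ gives $N_q(\zeta,\zeta) = \mathrm{Im}(q(\zeta))/\mathrm{Im}(\zeta)$ for $\zeta \in \C^+$, so positive definiteness of $N_q$ forces $\mathrm{Im}(q(\zeta)) \geq 0$.

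The main obstacle is $(A) \Rightarrow (B)$. I would apply the classical Herglotz representation for non-negative harmonic functions on $\C^+$ to $\mathrm{Im}(q)$, obtaining a constant $b \geq 0$ and a positive Borel measure $\nu$ satisfying $\int_\R \mathrm{d}\nu(t)/(1+t^2) < \infty$ such that
\begin{align*}
\mathrm{Im}(q(\zeta)) = b\,\mathrm{Im}(\zeta) + \int_\R \frac{\mathrm{Im}(\zeta)}{|t-\zeta|^2} \, \mathrm{d}\nu(t).
\end{align*}
Since $\mathrm{Im}(\zeta)/|t-\zeta|^2 = \mathrm{Im}\bigl(1/(t-\zeta) - t/(1+t^2)\bigr)$, the right-hand side of \eqref{eq:integralrepresentation} (with arbitrary real $a$) defines an analytic function on $\C^+$ whose imaginary part matches $\mathrm{Im}(q)$; their difference is therefore real-valued and analytic on $\C^+$, hence a real constant, which fixes $a$. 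The symmetry $q(\overline{\zeta}) = \overline{q(\zeta)}$ then extends the identity to $\C^-$.

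Finally, the identification of $\mathrm{d}\nu_{abs}$ is a standard consequence of the Stieltjes inversion formula applied to the representation just obtained: the function $(1/\pi)\mathrm{Im}(q(\cdot + i\epsilon))$ is, up to normalisation, the Poisson integral of $\nu$ at height $\epsilon$, and it converges weak-$*$ to $\mathrm{d}\nu_{abs}$ as $\epsilon \downarrow 0$. Rewriting $\mathrm{Im}(q(t+i\epsilon)) = (1/(2i))(q(t+i\epsilon) - q(t-i\epsilon))$ via the symmetry yields the second, symmetric form of the formula. The non-trivial ingredient throughout is the Herglotz representation for positive harmonic functions, which itself rests on a weak-$*$ compactness argument for uniformly bounded families of positive measures; I would simply cite this classical result rather than reprove it.
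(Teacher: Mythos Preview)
Your proof is correct and follows the standard classical argument. However, the paper does not actually prove this proposition: it is stated in the preliminaries as a well-known result, with references to \cite{edssjs.7395739519561201}, \cite{LugerOu}, and \cite[Theorem 3.27]{teschl2009mathematical} in lieu of a proof. Your cycle $(B)\Rightarrow(C)\Rightarrow(A)\Rightarrow(B)$ together with the Stieltjes inversion argument is precisely the approach one finds in those references, so there is nothing to compare against here.
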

We denote the class of Herglotz-Nevanlinna functions by $\mathcal{N}_{0}$.
It is well known that 
Herglotz-Nevanlinna functions are of bounded type, i.e. that   $\mathcal{N}_{0} \subset \mathcal{N}_{sym}$. The integral representation in item (B) defines  a minimal realization in the following way (see e.g. \cite[Section 2.4]{CN05856002820060101}): 
\begin{proposition}
Let $q$ be a Herglotz-Nevanlinna function, $U_{ext}(q)$ its extended domain and $(a,b,\nu)$ as in Proposition \ref{Prop:Herglotz}. We define a Hilbert space $\C_b$ as
\begin{align*}
\begin{cases}
\C_b =\C, \quad [\cdot,\cdot]_{\C_b} \vcentcolon = b  [\cdot,\cdot]_\C \quad & \text{ if } b\neq 0 
\\
\C_b =\{0\} \quad & \text{ if } b= 0.
\end{cases}
\end{align*}
Then the following holds:
\begin{itemize}
\item
If $b=0$, then $q$ has a minimal realization $(\mathcal{K},A,\phi)$ given by 
\begin{align*}
    &\mathcal{K}=L^2(\R, \nu) \\  
    &A=\{(f,g) \in L^2(\R, \nu) \times L^2(\R, \nu) \ \big| \ g(t) \equiv t  f(t)  \} \\
    & \phi(\zeta)= \frac{1}{t-\zeta} \in L^2(\R, \nu).
\end{align*}
\item If $b>0$, then $q$ has a minimal realization given by 
\begin{align*}
    &\mathcal{K}=L^2(\R, \nu)\oplus \C_b, \\  
    &A=\left\{ \left(
    \begin{pmatrix}
        f \\
        c
    \end{pmatrix},
    \begin{pmatrix}
        g \\
        d
    \end{pmatrix}
    \right)
     \in L^2(\R, \nu)\oplus \C_b \times L^2(\R, \nu)\oplus \C_b \ \big| \ 
     g(t)\equiv t  f(t), \ c=0  \right\},  \\
    &\phi(\zeta)=\begin{pmatrix}
        \frac{1}{t-\zeta} \\
        1
    \end{pmatrix} \in L^2(\R, \nu)\oplus \C_b.
\end{align*}
\end{itemize}
The relation $A$ is an operator if and only if $b=0$. In either case it holds that $\varrho(A)=U_{ext}(q)$. 
\end{proposition}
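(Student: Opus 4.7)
The plan is to verify the four claims of the proposition (realization formula, minimality, operator vs.\ relation character, and identification of $\varrho(A)$) more or less in parallel for the two cases, noting that the case $b>0$ is a rank-one extension of the case $b=0$.

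First I would check that the objects are well-defined. For $\zeta\in\C\setminus\R$, the estimate $\left|\tfrac{1}{t-\zeta}\right|^2 \leq \tfrac{C_\zeta}{1+t^2}$ together with the integrability condition $\int_\R \tfrac{d\nu(t)}{1+t^2}<\infty$ shows $\phi(\zeta)\in L^2(\R,\nu)$. The self-adjointness of $A$ is essentially the classical statement that multiplication by the independent variable is a self-adjoint operator on $L^2(\R,\nu)$; for $b>0$ the extra $\C_b$-slot turns this operator into a relation whose multivalued part is $\{0\}\oplus \C_b$, and a direct verification against the definition of $A^{+}$ using the decomposition $\mathcal{K}=L^2(\R,\nu)\oplus \C_b$ shows $A=A^+$. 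This already yields the third claim: $A$ is single-valued precisely when the $\C_b$-component is trivial, i.e.\ when $b=0$.

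Second, I would compute the resolvent. For $\zeta\in\C\setminus\R$ and $(g,d)\in \mathcal{K}$, the equation $((f,c),(g,d))\in A-\zeta$ is solved by
\begin{align*}
    f(t)=\tfrac{g(t)-d}{t-\zeta}, \qquad c=-\tfrac{d}{\zeta},
\end{align*}
so $\zeta\in\varrho(A)$ and $(A-\zeta)^{-1}$ has the explicit form above. Applying $I+(\zeta-\zeta_0)(A-\zeta)^{-1}$ to the generating vector $\phi(\zeta_0)$ then yields $\phi(\zeta)$ directly. Plugging into the inner product and using the integral representation from Proposition~\ref{Prop:Herglotz},
\begin{align*}
    [\phi(\zeta),\phi(\zeta_0)]_{\mathcal{K}}
    =\int_\R \tfrac{1}{(t-\zeta)(t-\overline{\zeta_0})}\,d\nu(t)+b,
\end{align*}
a short manipulation (subtracting and adding $\tfrac{t}{1+t^2}$ inside the integral, and using $q(\overline{\zeta_0})=\overline{q(\zeta_0)}$) reduces this to $\frac{q(\zeta)-\overline{q(\zeta_0)}}{\zeta-\overline{\zeta_0}}$, establishing the realization formula \eqref{For:PAN1}.

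Third, for minimality, observe that every defect element has the form $\phi(\zeta)=\bigl(\tfrac{1}{t-\zeta},\mathbbm{1}_{b>0}\bigr)$. The closed span of $\{\tfrac{1}{t-\zeta}:\zeta\in\C\setminus\R\}$ is dense in $L^2(\R,\nu)$ by the classical Stone--Weierstrass / measure-theoretic argument (these Cauchy kernels separate points, and any element orthogonal to all of them has vanishing Cauchy transform and therefore vanishes $\nu$-a.e.). Combined with the presence of a $\zeta$-independent second coordinate in the $b>0$ case, this suffices to hit all of $\mathcal{K}$. Finally, to identify $\varrho(A)$ with $U_{ext}(q)$, I would argue by the two inclusions: the defect formula $[\phi(\zeta),\phi(w)]_\mathcal{K}=\frac{q(\zeta)-\overline{q(w)}}{\zeta-\overline w}$ from Definition~\ref{Def:defect} implies $\varrho(A)\subseteq U_{ext}(q)$; conversely, $U_{ext}(q)\cap \R$ consists exactly of those real $x$ lying in an open interval disjoint from the topological support of $\nu$ (by the Stieltjes inversion formula in Proposition~\ref{Prop:Herglotz}), and multiplication by $(t-\zeta)$ is invertible on $L^2(\R,\nu)$ for every such $\zeta$, giving the reverse inclusion. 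The only mildly delicate step is the last: one must rule out points in $\R\setminus\supp(\nu)$ that are accumulation points of $\supp(\nu)$, but these automatically lie outside $U_{ext}(q)$ since analytic continuability through them would contradict Stieltjes inversion.
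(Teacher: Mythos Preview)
The paper does not prove this proposition; it states it with a citation to an external reference, so there is no proof in the paper to compare against. Your overall strategy is the standard one and is correct in outline, but there is a computational error in your resolvent formula for the case $b>0$.

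You claim that $((f,c),(g,d))\in A-\zeta$ is solved by $f(t)=\tfrac{g(t)-d}{t-\zeta}$ and $c=-\tfrac{d}{\zeta}$. This is not right. Unwinding the definition, $((f,c),(g,d))\in A-\zeta$ means there exists $(h,e)$ with $((f,c),(h,e))\in A$ and $(g,d)=(h-\zeta f,\,e-\zeta c)$. Membership in $A$ forces $h=tf$ and $c=0$; the second condition then gives $g=(t-\zeta)f$ and $d=e$ with $e\in\C$ free. Hence
\[
(A-\zeta)^{-1}(g,d)=\Bigl(\tfrac{g}{t-\zeta},\,0\Bigr),
\]
the second coordinate being zero because the multivalued part $\{0\}\oplus\C_b$ of $A$ is killed by the resolvent. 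With this corrected formula the verification of $(I+(\zeta-\zeta_0)(A-\zeta)^{-1})\phi(\zeta_0)=\phi(\zeta)$ and of the realization identity \eqref{For:PAN1} goes through exactly as you indicate; your erroneous formula, by contrast, does not reproduce $\phi(\zeta)$.

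Two minor remarks. In the minimality argument for $b>0$ you should say a word about why the constant second coordinate $1$ does not obstruct density: differences $\phi(\zeta_1)-\phi(\zeta_2)$ have second coordinate $0$, and their first coordinates already span a dense subspace of $L^2(\R,\nu)$ (an $h\in L^2(\R,\nu)$ orthogonal to all of them has constant Cauchy transform, which must vanish since $\|\tfrac{1}{t-iy}\|_{L^2(\nu)}\to 0$ as $y\to\infty$); adding any single $\phi(\zeta_0)$ then recovers all of $\mathcal{K}$. Finally, your closing worry about ``points in $\R\setminus\supp\nu$ that are accumulation points of $\supp\nu$'' is vacuous: $\supp\nu$ is closed, so its complement is open and contains no such points.
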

Consequently, we can use Proposition \ref{Prop:RKSIso} to define a (unique) modelling reproducing kernel Hilbert space for Herglotz-Nevanlinna functions as it was done in e.g. \cite[Theorem 5]{DeBranges}:
\begin{proposition} \label{Prop:RealHnfunctions}
    Let $q$ be a Herglotz-Nevanlinna function. Then $q$ has a modeling reproducing kernel space $\mathcal{L}(q)$, which is described by the isomorphism
    \begin{align*}
        F \vcentcolon \ L^2(\R,\nu)\oplus \C_b \rightarrow \mathcal{L}(q), \quad x \mapsto F(x), 
        \quad F(x)(\zeta)=[x,\phi(\overline{\zeta})]_{L^2(\R,\nu)\oplus \C_b}.
    \end{align*}
    In particular, it holds that
    \begin{align*}
        F\left(\begin{pmatrix}
        f \\
        c
    \end{pmatrix}\right)(\zeta)=\left[\begin{pmatrix}
        f \\
        c
\end{pmatrix},
\begin{pmatrix}
        \frac{1}{t-\overline{\zeta}} \\
        1
\end{pmatrix}\right]_{L^2(\R,\nu)\oplus \C_b}
        = \int_{\R} \frac{f(t)}{t-\zeta}  \mathrm{d}\nu(t)+ b   c
    \end{align*}
    is the sum of a Cauchy transform and a constant, which means that $\mathcal{L}(q)\subset \mathcal{N}^+(\C \setminus \R)$, see e.g. \cite[Proposition 5.1.1.]{77570320060101}. 
\end{proposition}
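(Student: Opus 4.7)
The plan is to directly invoke Proposition \ref{Prop:RKSIso}, which states that every minimal realization $(\mathcal{K},A,v)$ canonically yields an isomorphic modeling reproducing kernel space via the map $F(x)(\zeta)=[x,\phi(\overline{\zeta})]_{\mathcal{K}}$. The preceding proposition supplies an explicit minimal realization of the Herglotz-Nevanlinna function $q$ on $\mathcal{K}=L^2(\R,\nu)\oplus \C_b$, so the desired space is obtained by simply setting $\mathcal{L}(q):=F(\mathcal{K})$; the assertion that $\mathcal{L}(q)$ is a modeling reproducing kernel space and that $F$ is an isomorphism is then automatic from that earlier proposition.

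Next, I would compute $F$ explicitly by inserting the defect function $\phi(\zeta)=\begin{pmatrix}(t-\zeta)^{-1}\\ 1\end{pmatrix}$ into the definition of $F$. Since $\overline{(t-\overline{\zeta})^{-1}}=(t-\zeta)^{-1}$ for $t\in \R$, expanding the inner product on $L^2(\R,\nu)\oplus \C_b$ collapses to
\[
F\Bigl(\begin{pmatrix} f\\ c\end{pmatrix}\Bigr)(\zeta)=\int_{\R}\frac{f(t)}{t-\zeta}\,d\nu(t)+b\cdot c,
\]
and the case $b=0$ is handled identically, with the constant term absent. Absolute convergence of the integral for $\zeta\in\C\setminus\R$ follows from the Cauchy-Schwarz inequality together with $(t-\zeta)^{-1}\in L^2(\R,\nu)$, itself a consequence of $\int d\nu(t)/(1+t^2)<\infty$.

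Finally, to deduce $\mathcal{L}(q)\subset \mathcal{N}^+(\C\setminus \R)$, I would observe that the constant summand $b\cdot c$ trivially belongs to $\mathcal{N}^+$, while the Cauchy transform summand lies in $\mathcal{N}^+(\C^+)\cap \mathcal{N}^+(\C^-)$ by the cited result \cite[Proposition 5.1.1.]{77570320060101}. The hardest part of the argument is essentially nothing substantive: everything reduces to a direct application of Proposition \ref{Prop:RKSIso} followed by a short evaluation of the inner product, with the Smirnov-class membership being a known structural fact about Cauchy transforms of $L^1(\frac{d\nu}{1+t^2})$-densities.
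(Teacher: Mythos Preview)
Your proposal is correct and follows exactly the approach indicated in the paper: the proposition is stated there as an immediate consequence of Proposition~\ref{Prop:RKSIso} applied to the explicit minimal realization on $L^2(\R,\nu)\oplus\C_b$ from the preceding proposition, with the Smirnov-class inclusion deferred to the cited reference on Cauchy transforms. There is nothing to add.
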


\subsection{$\mathcal{B}(h)$-spaces} \label{Sec:Bh spaces}
As outlined in the introduction, we base our construction on a generalization of Helson's representation theorem for real complex functions. While it was sufficient to work with inner functions for that specific class,  we need to work with the class $\mathcal{S}_0$ in the general case:
\begin{definition}
Let $h \in \mathcal{N}(\C\setminus \R)$. Then $h$ is in the class $\mathcal{S}_0$ if   
\begin{align*}
h_{|\C^+} \in \mathcal{H}^\infty(\C^+) \setminus \{0\}, \quad 
\|h_{|\C^+}\|_{\mathcal{H}^\infty(\C^+)} \leq 1, \quad \text{ and } 
h_{|\C^-} =\frac{1}{SR(h_{|\C^+})}.
\end{align*}
Note that $h$ has a pole at $\zeta \in \C^-$ if and only if $h(\overline{\zeta})=0$.
\end{definition}
It is clear that any given function $f \in \mathcal{H}^\infty(\C^+)\setminus \{0\}$ satisfying $\|f\|_{\mathcal{H}^\infty(\C^+)} \leq 1$ generates a function $\tilde{f}$ in $\mathcal{S}_0$ by setting
\begin{align*}
\tilde{f}=f \ \text{ on $\C^+$  and } \
\tilde{f}_{|\C^-} =\frac{1}{SR(f_{|\C^+})}.
\end{align*}
In this case, we say that $\tilde{f}$ is generated by $f$. It should be noted  that if $f$ is an inner function, then its extension to $\C^-$ coincides with its pseudocontinuation. 
\par\smallskip
The connection between the class of Nevanlinna functions and the class $\mathcal{S}_0$ is given by the Cayley transform. Indeed, it is straightforward to check that 
\begin{align*}
    C \vcentcolon \mathcal{N}_0 \setminus \{ q \in  \mathcal{N}_0 \vcentcolon q_{|\C^+}\equiv \mathrm{i} \} \rightarrow \mathcal{S}_0 \setminus \{h\equiv -1\}, \quad q \mapsto C(q)= \frac{1+iq}{1-iq}
\end{align*}
is a bijection. 
\par\smallskip
The class  $\mathcal{S}_0$ was originally introduced by de Branges  \cite{edsjsr.10.2307.199375419630301} and has been studied on multiple occasions afterwards \cite{Alpay1},\cite{edssjs.3E774ADC19840901}. 
Naturally associated with  a function in $\mathcal{S}_0$ is the  reproducing kernel Hilbert space described in the next Theorem:  
\begin{theorem}[DeBranges]\label{Thrm:Basic1}
Let $h\in \mathcal{S}_0$. Then there exists a unique reproducing kernel Hilbert space  on $U_{ext}(h)$ and kernel 
\begin{align*}
    &s_h(z, w) \vcentcolon U_{ext}(h) \times U_{ext}(h) \rightarrow \C \\
    &s_h(z, w)=\frac{1-h(z)\overline{h(w)}}{- \mathrm{i}  (z - \overline{w})}, \  z \neq \overline w, \quad
s_h(w,\overline{w})=\frac{1}{-\mathrm{i} h(w)}  h'(w).
\end{align*}
We denote this space by $\mathcal{B}(h)$.
Moreover,  the difference-quotient  operator $D_w$ is well-defined and bounded on $\mathcal{B}(h)$ for every $w \in U_{ext}(h)$ such that $h(w)\neq 0$.
\end{theorem}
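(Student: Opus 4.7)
My plan is to reduce the theorem to the already-established Herglotz--Nevanlinna case (Proposition \ref{Prop:RealHnfunctions}) via the Cayley transform of Proposition \ref{Prop:Cayley}. If $h\equiv -1$ then the kernel $s_h$ vanishes identically and $\mathcal{B}(h)=\{0\}$ works trivially, so I assume $h\not\equiv -1$ and set $q:=i(1-h)/(1+h)$. By Proposition \ref{Prop:Cayley} this gives $q\in\mathcal{N}_{sym}$, and the contractivity $|h|\leq 1$ on $\C^+$ translates into $\mathrm{Im}\,q\geq 0$ there, so in fact $q\in\mathcal{N}_0$. A direct algebraic expansion of $1-h(z)\overline{h(w)}$ using the Cayley formula then yields the key identity
\[
s_h(z,w)=2\,g(z)\,\overline{g(w)}\,N_q(z,w),\qquad g(\zeta):=\frac{1}{1-iq(\zeta)},
\]
which presents $s_h$ as a conjugation of the Nevanlinna kernel $N_q$. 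Note that $g$ is analytic and nowhere zero on $U_{ext}(q)$, since $q$ has non-negative imaginary part on $\C^+$, non-positive on $\C^-$ and is real on its analytic continuation to $\R$, so the value $-i$ is never attained.

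Invoking the conjugation Proposition \ref{Prop:Conj} with the known modelling reproducing kernel Hilbert space $\mathcal{L}(q)$ from Proposition \ref{Prop:RealHnfunctions}, the space $\mathcal{B}(h):=\sqrt{2}\cdot g\cdot\mathcal{L}(q)$, endowed with the inner product making multiplication by $\sqrt{2}\,g$ unitary, is then a reproducing kernel Hilbert space with kernel precisely $s_h$. Uniqueness of an RKHS given its reproducing kernel is standard: any two candidates coincide on the dense span of kernel functions and agree as Hilbert spaces.

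For the boundedness of the difference-quotient operator I use the multiplication identity from Definition \ref{Def:diffquot}. Writing $F=g\cdot f$ with $f\in\mathcal{L}(q)$, one verifies
\[
D_w F \;=\; g\cdot D_w f \;+\; f(w)\cdot D_w g.
\]
By Proposition \ref{Prop:Canreal} applied to the modelling RKHS $\mathcal{L}(q)$, $D_w$ is bounded on $\mathcal{L}(q)$, so $g\cdot D_w f\in g\cdot\mathcal{L}(q)=\mathcal{B}(h)$. For the second summand the symmetry $\overline{q(\overline{w})}=q(w)$ together with a short calculation gives
\[
D_w g \;=\; i\,g(w)\cdot g\cdot N_q(\cdot,\overline{w}),
\]
and since the kernel function $N_q(\cdot,\overline{w})$ lies in $\mathcal{L}(q)$, this is again an element of $\mathcal{B}(h)$. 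Combining these with the continuity of point evaluation $f\mapsto f(w)$ on the RKHS $\mathcal{L}(q)$ and the norm equivalence $\|F\|_{\mathcal{B}(h)}\sim\|f\|_{\mathcal{L}(q)}$ yields the bound $\|D_w F\|_{\mathcal{B}(h)}\leq C_w\|F\|_{\mathcal{B}(h)}$.

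The main technical obstacle I anticipate is reconciling the two domains: the Cayley construction outputs an RKHS on $U_{ext}(q)$, whereas the theorem demands one on $U_{ext}(h)$, and the two differ on the (at most discrete) locus $\{h=-1\}$ where $q$ has poles. At such a point $w_0$ the simple zero of $g$ should cancel the corresponding pole of $f\in\mathcal{L}(q)$, so each $F=gf$ extends analytically across $w_0$ and the extended space is still an RKHS for $s_h$; pinning down this extension, together with the hypothesis $h(w)\neq 0$ that keeps the kernel formula $s_h(w,\overline{w})=h'(w)/(-i\,h(w))$ meaningful via the pseudocontinuation relation $\overline{h(\overline{w})}=1/h(w)$, is the remaining delicate step.
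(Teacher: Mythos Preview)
Your proposal is correct and follows essentially the same route as the paper's own argument (given in Theorem \ref{Thrm:BhLq} immediately after the statement): the conjugation factor $\sqrt{2}\,g=\sqrt{2}/(1-iq)$ is literally $(1+h)/\sqrt{2}$, the kernel identity $s_h=\tfrac{1+h}{\sqrt 2}\,N_q\,\tfrac{1+\overline h}{\sqrt 2}$ is the same computation, and your Leibniz-rule treatment of $D_w$ is the paper's argument verbatim, with your formula $D_w g = i\,g(w)\,g\,N_q(\cdot,\overline w)$ corresponding exactly to the paper's observation $D_w(h)\in\mathrm{span}\{s_h(\cdot,\overline w)\}$. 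The domain discrepancy between $U_{ext}(q)$ and $U_{ext}(h)$ that you flag as the remaining delicate step is not spelled out in the paper's alternative proof either; it is absorbed into the reference cited just before Theorem \ref{Thrm:BhLq}.
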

One proof can be found in \cite[Section 3]{edssjs.3E774ADC19840901}.  We will give another proof of  this theorem now.  In fact, we will prove slightly more for the sake of later referencing: 
\begin{theorem}\label{Thrm:BhLq}
    Let $h \in \mathcal{S}_0 \setminus \{h\equiv-1\}$ and define $q=C^{-1}(h)$. Then the reproducing kernel space given by 
    \begin{align*}
        \frac{1+h(\zeta)}{\sqrt{2}}  \mathcal{L}(q) \subset \mathcal{N}(\C \setminus \R)
    \end{align*}
    is the space $\mathcal{B}(h)$, as described in Theorem \ref{Thrm:Basic1}. Furthermore, for any $w  \in U_{ext}(h)$ such that $h(w)\neq 0$ it holds that
    \begin{align*}
    D_w(h)=
    \frac{ h(\zeta)- h(w)}{\zeta-w} \in \mathcal{B}(h).
    \end{align*}
\end{theorem}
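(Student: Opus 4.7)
The plan is to apply the conjugation result Proposition \ref{Prop:Conj} to $\mathcal{L}(q)$ with the analytic multiplier $g(\zeta)=(1+h(\zeta))/\sqrt{2}$, identify the resulting reproducing kernel with $s_h$, and invoke uniqueness of a reproducing kernel Hilbert space with a prescribed positive kernel. Since $h \not\equiv -1$, the multiplier $g$ does not vanish identically on any connected component of the domain, so Proposition \ref{Prop:Conj} directly gives that $g\cdot \mathcal{L}(q)$ is a reproducing kernel Hilbert space with kernel
\[
K(\zeta,w) \;=\; g(\zeta)\,N_q(\zeta,w)\,\overline{g(w)} \;=\; \frac{(1+h(\zeta))(1+\overline{h(w)})}{2}\cdot \frac{q(\zeta)-\overline{q(w)}}{\zeta-\overline{w}}.
\]

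The crucial step is then an algebraic simplification. Using $q=i(1-h)/(1+h)$ and $\overline{q(w)}=-i(1-\overline{h(w)})/(1+\overline{h(w)})$, I would combine the two fractions defining $q(\zeta)-\overline{q(w)}$ over the common denominator $(1+h(\zeta))(1+\overline{h(w)})$. The cross-terms in the numerator collapse, leaving $2i(1-h(\zeta)\overline{h(w)})$; substituting back, the factors $(1+h(\zeta))(1+\overline{h(w)})$ cancel and one arrives at
\[
K(\zeta,w) \;=\; \frac{i\bigl(1-h(\zeta)\overline{h(w)}\bigr)}{\zeta-\overline{w}} \;=\; \frac{1-h(\zeta)\overline{h(w)}}{-i(\zeta-\overline{w})} \;=\; s_h(\zeta,w).
\]
By uniqueness of the reproducing kernel Hilbert space with a given kernel, $g\cdot \mathcal{L}(q)$ coincides with $\mathcal{B}(h)$ from Theorem \ref{Thrm:Basic1}.

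For the second assertion, I would combine the multiplication identity (Definition \ref{Def:diffquot} (ii)) with $q=i(1-h)/(1+h)$ to relate $D_w(q)$ and $D_w(h)$. A direct computation of the difference quotient of $(1-h)/(1+h)$, combining two fractions over $(1+h(\zeta))(1+h(w))$, yields
\[
D_w(q)(\zeta) \;=\; \frac{-2i\,D_w(h)(\zeta)}{(1+h(\zeta))(1+h(w))},
\]
which rearranges to $D_w(h)(\zeta)=\tfrac{1+h(\zeta)}{\sqrt{2}}\cdot \tfrac{i(1+h(w))}{\sqrt{2}}\cdot D_w(q)(\zeta)$. Since $q\in \mathcal{N}_{sym}$, one has $D_w(q)=N_q(\cdot,\overline{w})$, a reproducing kernel function and hence an element of $\mathcal{L}(q)$; rescaling by the constant $i(1+h(w))/\sqrt{2}$ keeps it in $\mathcal{L}(q)$, and the outer multiplication by $g$ places $D_w(h)$ in $g\cdot\mathcal{L}(q)=\mathcal{B}(h)$.

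The main obstacle I anticipate is bookkeeping of domains: the multiplier $g=(1+h)/\sqrt{2}$ vanishes precisely where $q$ has poles, so the natural domain of $\mathcal{L}(q)$ is strictly smaller than $U_{ext}(h)$, while elements of $g\cdot\mathcal{L}(q)$ admit analytic extensions to those isolated points. The clean resolution is to carry out the kernel identification on the common domain $U_{ext}(q)=U_{ext}(h)\setminus\{h=-1\}$ and then use the domain-extension remark from Section 2.3 to pass to $U_{ext}(h)$. The $D_w$-computation similarly requires $h(w)\neq -1$; since $h\not\equiv -1$ is bounded by $1$ on $\C^+$, the maximum modulus principle rules this out on $\C\setminus \R$, so only finitely many real points in $U_{ext}(h)$ may require a separate continuity argument.
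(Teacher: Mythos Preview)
Your kernel computation and invocation of Proposition \ref{Prop:Conj} are exactly what the paper does for the first assertion. For the membership $D_w(h)\in\mathcal{B}(h)$, your route through $\mathcal{L}(q)$ via $D_w(q)=N_q(\cdot,\overline{w})$ is correct, but the paper takes a shorter path that also sidesteps the $h(w)=-1$ issue you flag: it observes directly that the symmetry $h(w)\,\overline{h(\overline{w})}=1$ (valid for every $w\in U_{ext}(h)$ with $h(w)\neq 0$, including real $w$ with $h(w)=-1$) gives
\[
-D_w(h)(\zeta)=-i\,h(w)\cdot\frac{1-h(\zeta)\overline{h(\overline{w})}}{-i(\zeta-w)}=-i\,h(w)\,s_h(\zeta,\overline{w}),
\]
so $D_w(h)$ is a scalar multiple of the kernel element $s_h(\cdot,\overline{w})\in\mathcal{B}(h)$. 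This one-line identification avoids passing through $q$ and hence avoids the exceptional set $\{h=-1\}$ entirely; your continuity patch there is plausible but unnecessary once you use the $s_h$-kernel directly. Incidentally, the zeros of $h+1$ on $U_{ext}(h)\cap\R$ need not be finite, only discrete, so ``finitely many'' in your last paragraph should be weakened.
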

\begin{proof}
We calculate
\begin{align*}
     \frac{1+h(\zeta)}{\sqrt{2}} & N_q(\zeta,w)   \frac{1+\overline{h(w)}}{\sqrt{2}}= \frac{1}{2  (\zeta-\overline{w})}  (1+h(\zeta))  (q(\zeta)-\overline{q(w)})  (1+\overline{h(w)})\\
     &= \frac{1}{2  (\zeta-\overline{w})}  (1+h(\zeta)) 
     \left(\mathrm{i}  \frac{1- h(\zeta)}{1+h(\zeta)}+\mathrm{i}  \frac{1- \overline{h(w)}}{1+\overline{h(w)}} \right)
       (1+\overline{h(w)})\\
     &=\frac{1}{-2  \mathrm{i}  (\zeta-\overline{w})}  
     ((1-h(\zeta)) (1+\overline{h(w)}) +(1+h(\zeta))  (1- \overline{h(w)}))=s_h(\zeta, w). 
\end{align*}
Consequently, we see that the Nevanlinna kernel $N_q$ and $s_h$ are related by the multiplication with the function $\frac{1+h(\zeta)}{\sqrt{2}}$. According to  Proposition \ref{Prop:Conj}, this means that 
\begin{align}
    \mathcal{B}(h)=\frac{1+h}{\sqrt{2}} \cdot \mathcal{L}(q) \subset \mathcal{N}(\C \setminus \R).
\end{align}
Next, let $w \in   U_{ext}(h)$ such that $h(w)\neq 0$. Since $h$ is  analytic at $\overline{w}$ and $h(w) \overline{h(\overline{w})}=1$ by construction,  we conclude that 
\begin{align}\label{For:WichtigesElement}
    -D_w(h)(\zeta)=-
    \frac{ h(\zeta)- h(w)}{\zeta-w}= -\mathrm{i}  h(w) \frac{ 1- h(\zeta)  \overline{h(\overline{w})}}{-\mathrm{i}  (\zeta-w)} =  h(w)  s_{h}(\zeta,\overline{w})\in \mathcal{B}(h).
\end{align}
Finally, consider $f \in \mathcal{B}(h)$. We can express $f$ as $f=(h+1) \cdot \tilde{f}$, where $\tilde{f}\in \mathcal{L}(q)$. Using the multiplication formula for $D_w$, we infer that
\begin{align*}
    D_w(f)=(h+1) \cdot D_w(\tilde{f})+ \tilde{f}(w) \cdot D_w(h+1) = (h+1) \cdot D_w(\tilde{f})+ \tilde{f}(w) \cdot D_w(h).
\end{align*}
Here, we have used the fact that the difference quotient doesn't ``see'' constants. 
Since $D_w$ is a bounded operator on $\mathcal{L}(q)$ and $D_w(h) \in \mathcal{B}(h)$, it follows that $D_w(f) \in \mathcal{B}(h)$. By the closed graph theorem $D_w$ is also bounded, which completes the proof. 
\end{proof}
A careful reading of the proof  reveals that the boundedness of $D_w$ is equivalent to $D_w(h) \in \mathcal{B}(h)$. We will discuss this relationship  at a later stage. Next, we clarify the relationship between $\mathcal{B}(h)$ spaces and the more widely  known de Branges-Rovnak spaces, which  are constructed as follows \cite[Section 18.4]{Fricain_Mashreghi_2016}:
\begin{proposition}
Let $f \in \mathcal{H}^\infty(\C^+)$ with $\|f\|_{\mathcal{H}^\infty(\C^+)} \leq 1$ and define the kernel function
\begin{align*}
    k_{f} \vcentcolon \C^+ \times \C^+ \rightarrow \C, \quad
    k_{f}(\zeta,w)= \frac{1-f(\zeta) \overline{f(w)}}{-\mathrm{i}  (\zeta-\overline w)}.
\end{align*}
Then this kernel generates a reproducing kernel Hilbert space $\mathcal{H}(f)$ on $\C^+$. Moreover,  it holds that $\mathcal{H}(f) \subset \mathcal{H}^2(\C^+)$ as sets.
\end{proposition}
The space $\mathcal{H}(f)$ is called de Branges-Rovnak space or model space if $f$ is an inner function. 
\par\smallskip
This raises the question, when the restriction to $\C^+$ is an isomorphism from $\mathcal{B}(h)$ to $\mathcal{H}(h_{|\C^+})$:
\begin{proposition}
\label{Prop:Rovnak}
Let $h\in \mathcal{S}_0$ be extreme, which means that  
\begin{align*}
\int_{\R} \frac{\log(1-|\lim_{\epsilon \downarrow 0} h(x+\mathrm{i}\epsilon)|)}{1+x^2} \mathrm{d} \lambda(x)=-\infty.
\end{align*}
Then the restriction operator 
\begin{align*}
    \mathrm{Res} \vcentcolon \mathcal{B}(h) \rightarrow \mathcal{H}(h_{|\C^+}), \quad f \mapsto f_{|\C^+}
\end{align*}
is an isometric isomorphism. 
\end{proposition}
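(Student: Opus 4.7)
The approach I will take hinges on the observation that the two reproducing kernels coincide on $\C^+\times\C^+$: for $z,w\in\C^+$ one has
\begin{align*}
s_h(z,w) \;=\; \frac{1-h(z)\overline{h(w)}}{-i(z-\overline{w})} \;=\; k_{h_{|\C^+}}(z,w).
\end{align*}
Hence restriction sends $s_h(\cdot,w)\in \mathcal{B}(h)$ to $k_{h_{|\C^+}}(\cdot,w)\in \mathcal{H}(h_{|\C^+})$ for every $w\in\C^+$, and I will organize the argument around this identification on kernel spans.

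\textbf{Step 1.} I would set $\mathcal{S}\vcentcolon =\mathrm{span}\{s_h(\cdot,w):w\in\C^+\}\subset \mathcal{B}(h)$. The reproducing kernel formula for norms, combined with the coincidence of kernels above, gives
\begin{align*}
\Big\|\sum_i c_i\, s_h(\cdot,w_i)\Big\|_{\mathcal{B}(h)}^2
\;=\; \sum_{i,j} c_i\overline{c_j}\, s_h(w_j,w_i)
\;=\; \Big\|\sum_i c_i\, k_{h_{|\C^+}}(\cdot,w_i)\Big\|_{\mathcal{H}(h_{|\C^+})}^2,
\end{align*}
so $\mathrm{Res}$ restricts to a well-defined isometry on $\mathcal{S}$; well-definedness is automatic because a linear combination of kernels vanishes in either space iff it vanishes pointwise on $\C^+$. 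Since $\{k_{h_{|\C^+}}(\cdot,w):w\in\C^+\}$ is dense in $\mathcal{H}(h_{|\C^+})$, this isometry extends by continuity to an isometric isomorphism from $\overline{\mathcal{S}}\subset \mathcal{B}(h)$ onto $\mathcal{H}(h_{|\C^+})$, and the extension coincides with the genuine restriction on $\overline{\mathcal{S}}$ by pointwise convergence through the reproducing kernels.

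\textbf{Step 2.} Next I would establish $\overline{\mathcal{S}}=\mathcal{B}(h)$, equivalently that every $f\in \mathcal{B}(h)$ vanishing on $\C^+$ is zero in $\mathcal{B}(h)$. I would invoke Theorem~\ref{Thrm:BhLq} to factor $f = \tfrac{1+h}{\sqrt{2}}\, g$ with $g\in\mathcal{L}(q)$, $q=C^{-1}(h)$; since $1+h\not\equiv 0$ on $\C^+$, this forces $g|_{\C^+}\equiv 0$. By Proposition~\ref{Prop:RealHnfunctions}, $g$ is a Cauchy transform of an $L^2(\R,\nu)$-density plus a constant; letting $\zeta\to\infty$ in $\C^+$ kills the constant, and Stieltjes inversion then forces the underlying density to vanish, so $g\equiv 0$ on $\C\setminus\R$ and $f\equiv 0$ in $\mathcal{B}(h)$.

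\textbf{Main obstacle.} The delicate point is Step 2 — the assertion that elements of $\mathcal{B}(h)$ are determined by their values on $\C^+$. This is precisely where the extreme hypothesis enters: it guarantees that $\mathcal{B}(h)$ contains no nonzero element effectively supported only on $\C^-$. Without extremeness, $\mathcal{H}(h_{|\C^+})$ would contain functions lacking a pseudocontinuation to $\C^-$, and $\mathrm{Res}$, though still isometric on $\overline{\mathcal{S}}$, would fail to be surjective onto $\mathcal{H}(h_{|\C^+})$.
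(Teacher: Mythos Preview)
The paper does not give its own proof of this proposition; it is quoted from an external reference. So I evaluate your argument on its own.

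Step~1 is fine. The gap is in Step~2: the Stieltjes--inversion claim is false as written, and your argument there never actually uses the extremeness hypothesis. Concretely, knowing that the Cauchy transform
\[
\zeta\;\longmapsto\;\int_{\R}\frac{\tilde g(t)}{t-\zeta}\,d\nu(t)
\]
vanishes for all $\zeta\in\C^+$ does \emph{not} force $\tilde g=0$ in $L^2(\nu)$ when $\tilde g$ is complex-valued; Stieltjes inversion for complex densities requires the jump across $\R$, i.e.\ information from both half-planes. For an explicit counterexample take $\nu=\lambda$: then any boundary function $\tilde g\in H^2(\C^-)$ yields a Cauchy transform that vanishes identically on $\C^+$ but is nonzero on $\C^-$. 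This particular $\nu$ corresponds to $q\equiv i\pi$ on $\C^+$ and hence to a constant $h\in(-1,1)$, which is \emph{not} extreme---so there is no contradiction with the proposition, only with your argument, which would apply verbatim to this non-extreme $h$.

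You correctly flag in your ``Main obstacle'' paragraph that extremeness must enter at exactly this point, but you do not supply the mechanism; the sentence about surjectivity is also inverted, since in your own setup what fails without extremeness is the density $\overline{\mathcal S}=\mathcal B(h)$, i.e.\ injectivity of $\mathrm{Res}$. The substance of the proof is precisely to convert the integral condition $\int_\R\frac{\log(1-|h(x+i0)|)}{1+x^2}\,dx=-\infty$ into the uniqueness statement ``$g\in\mathcal L(q)$, $g_{|\C^+}=0$ $\Rightarrow$ $g=0$'', and that step is missing.
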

\begin{proof}
Let $h\in \mathcal{S}_0$ be extreme  and consider the associated 
    Herglotz-Nevanlinna function $q = C^{-1}(h)$ given by the Cayley transform. Recall that in this case it holds that 
    \begin{align*}
    \frac{1+h}{\sqrt{2}}  \mathcal{L}(q)= \mathcal{B}(h).
    \end{align*}
    Then, since $h$ is extreme, it follows that  
    \begin{align*}
        \overline{\mathrm{span}} \{N_q(\cdot,w) \vcentcolon w \in \C^+ \}= \mathcal{L}(q),
    \end{align*}
    see \cite[Theorem 7.3]{S002212361200435120130215}. 
    This implies that the image 
    \begin{align*}
        \frac{1+h}{\sqrt{2}} \cdot  \mathrm{span} \{N_q(\cdot,w) \vcentcolon w \in \C^+ \}= 
        \mathrm{span}\{s_h(\cdot,w) \vcentcolon w \in \C^+ \}\subset \mathcal{B}(h)
    \end{align*}
    is also dense in $\mathcal{B}(h)$. Here, we have used that
    the multiplication by $\frac{1+h}{\sqrt{2}}$ defines an unitary operator and the following identity: 
    \begin{align*}
         \frac{1+h(\zeta)}{\sqrt{2}} & N_q(\zeta,w)   \frac{1+\overline{h(w)}}{\sqrt{2}}=s_h(\zeta, w). 
    \end{align*}
    Now consider the following operator: 
    \begin{align*}
        T \vcentcolon  \mathrm{span} \{s_h(\cdot,w) \vcentcolon w \in \C^+ \} \subset \mathcal{B}(h) \rightarrow \mathcal{H}(h_{|\C^+}), \quad f \mapsto f_{|\C^+}.
    \end{align*}
    Note that $T$ is well-defined because the restriction of the kernel function $s_h(\cdot,w)$ of $\mathcal{B}(h)$ is just the kernel function $k_{h_{|\C^+}}(\cdot,w)$ of $\mathcal{H}(h_{|\C^+})$. In particular, it follows that $T$ has dense range. Moreover, note that the reproducing property implies the following identity:
    \begin{align*}
        [s_h(\cdot,w),s_h(\cdot,x)]_{\mathcal{B}(h)}
        =\frac{1-h(x)\overline{h(w)}}{- \mathrm{i}  (x - \overline{w})}=
        [k_{h_{|\C^+}}(\cdot,x),k_{h_{|\C^+}}(\cdot,w)]_{\mathcal{H}(h_{|\C^+})} \quad \forall x,w \in \C^+.
    \end{align*}
    From there it is straightforward  to verify that $T$ is an isometry and that the continuous extension of $T$ is indeed the restriction operator. 
\end{proof}
We see that there is a subtle difference between the two sided space $\mathcal{B}(h)$ and the one sided de Branges-Rovnak space. This difference will turn out to be crucial, which is why take the lower half plane into account and consider functions in $\mathcal{N}_{sym}$ instead of $\mathcal{N}(\C^+)$. 
\par\smallskip
Next, we note that the class $\mathcal{S}_0$ has a multiplicative structure. Specifically, if $h_1$ and $h_2$ are elements of $\mathcal{S}_0$, then so is $h_1\cdot h_2$. This enables us to establish a divisibility theory analogous to the one for inner functions (Section \ref{Sec:BoundedType}):
\begin{definition}
Let $h_1$ and $h_2$ in $\mathcal{S}_0$. Then we say that 
\begin{itemize}
    \item the function $h_1$ divides $h_2$ if there exists a function $h_3\in \mathcal{S}_0$ such that $h_2=h_1 \cdot h_3$.
    \item the functions  $h_1$ and $h_2$ are relatively prime, if there exists no non-trivial  function in $\mathcal{S}_0$ which divides both $h_1$ and $h_2$. Here a function $h \in \mathcal{S}_0$ is non-trivial, if $h_{|\C^+}$ is not a constant function of modulus one.
\end{itemize}
\end{definition}
Similarly to the case of inner functions, we can characterize when two functions are relatively prime:
\begin{proposition}
    Let $h_1$ and $h_2$ be elements of $\mathcal{S}_0$ and let 
    $(h_i)_{|\C^+}=V_i \cdot O_i, \ i \in
    \{1,2\}$ be the inner outer decomposition. Then $h_1$ and $h_2$ are relatively prime if and only if 
    \begin{itemize}
        \item the inner functions $V_1$ and $V_2$ are relatively prime
        \item and the absolutely continuous measures $\log|O_1| \mathrm{d} \lambda$ and $\log |O_2| \mathrm{d} \lambda$ on $\R$ representing $ O_1$ and $O_2$ are mutually singular.
    \end{itemize}
\end{proposition}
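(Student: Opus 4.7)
The strategy is to reduce the claim to corresponding statements about the inner and outer factors separately, using uniqueness of the inner--outer factorization in $\mathcal{H}^\infty(\C^+)$. Throughout, membership in $\mathcal{S}_0$ is determined by the restriction to $\C^+$, so it suffices to reason on the upper half-plane.

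First, I would show that divisibility in $\mathcal{S}_0$ factors through the inner--outer decomposition: $h_1$ divides $h_2$ in $\mathcal{S}_0$ if and only if $V_1$ divides $V_2$ as inner functions and $O_1$ divides $O_2$ as bounded outer functions (equivalently, $|O_2| \leq |O_1|$ $\lambda$-a.e.\ on $\R$). For the forward direction, if $h_2=h_1\cdot h_3$ with $h_3\in\mathcal S_0$, then $(h_3)_{|\C^+}$ has some inner--outer factorization $V_3\cdot O_3$, and multiplying gives $V_2\cdot O_2 = (V_1V_3)\cdot(O_1O_3)$; the uniqueness of inner--outer factorization then yields $V_2=V_1V_3$ and $O_2=O_1O_3$. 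The reverse direction is immediate, since the product $V_3\cdot O_3$ lies in $\mathcal{H}^\infty(\C^+)$ with norm at most $1$ and uniquely extends to an element of $\mathcal{S}_0$.

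Second, I would analyze when a common divisor is non-trivial. A function $h\in\mathcal{S}_0$ with inner--outer factorization $V\cdot O$ on $\C^+$ is trivial precisely when $|h|\equiv 1$ on $\R$, which forces $|O|\equiv 1$ hence $O$ constant unimodular, and then also $V$ constant unimodular. Hence $h$ is non-trivial iff at least one of $V$ and $O$ is non-constant. Combining with step one: $h_1,h_2$ are not relatively prime if and only if either $V_1,V_2$ share a non-constant common inner divisor, or $O_1,O_2$ share a non-constant common bounded outer divisor.

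Third, I would characterize the outer part. Setting $\varphi_i:=-\log|O_i|\geq 0$, divisibility $O\mid O_i$ translates to $-\log|O|\leq\varphi_i$ $\lambda$-a.e. The natural candidate for a maximal common outer divisor is the outer function $O$ with $-\log|O|=\min(\varphi_1,\varphi_2)$. This $\min$ is non-negative and bounded above by $\varphi_1$, so it satisfies the integrability condition $\int(1+t^2)^{-1}\min(\varphi_1,\varphi_2)\,\mathrm d\lambda<\infty$ and thus genuinely defines a bounded outer function dividing both $O_i$. It is non-constant (i.e.\ $|O|\not\equiv 1$) iff $\min(\varphi_1,\varphi_2)\not\equiv 0$, iff $\{\varphi_1>0\}\cap\{\varphi_2>0\}$ has positive Lebesgue measure. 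Since $\log|O_i|\,\mathrm d\lambda$ is absolutely continuous with density $-\varphi_i$, mutual singularity of the two measures is exactly the statement that $\{\varphi_1>0\}\cap\{\varphi_2>0\}$ is a $\lambda$-null set. Hence $O_1,O_2$ admit no non-constant common outer divisor iff $\log|O_1|\,\mathrm d\lambda$ and $\log|O_2|\,\mathrm d\lambda$ are mutually singular.

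Finally, I would combine step three with the previous Proposition, which characterizes relative primeness of inner functions. Putting these together via step two, relative primeness of $h_1,h_2$ in $\mathcal{S}_0$ is equivalent to the conjunction of both conditions, as stated. The only subtle point is the careful bookkeeping in step two showing that the ``non-trivial'' condition splits cleanly across the inner and outer factors; once that is in hand the rest is a routine assembly and there is no serious obstacle.
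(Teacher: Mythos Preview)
Your proof is correct and follows essentially the same line as the paper's: separate the inner and outer factors via uniqueness of the inner--outer factorization, handle the inner part by the cited characterization of relatively prime inner functions, and for the outer part construct the greatest common bounded outer divisor with boundary modulus $\exp(\max(\log|O_1|,\log|O_2|))$, which is non-trivial precisely when the two measures fail to be mutually singular. Your formulation via $\varphi_i=-\log|O_i|$ and $-\log|O|=\min(\varphi_1,\varphi_2)$ is in fact the correct one; the paper's version writes $\min\{\log|O_1|,\log|O_2|\}$ for the density of the common divisor, which is a slip (that choice does not divide either $O_i$), so your more careful bookkeeping here is a genuine improvement.
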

\begin{proof}
    It is straightforward to verify that  $h_1$ and $h_2$ are relatively prime if $h_1$ and $h_2$ satisfy (i) and (ii). 
    \par\smallskip
    Conversely, let $h_1$ and $h_2$ be elements of $\mathcal{S}_0$, and assume that  $h_1$ and $h_2$ are relatively prime. If $V_1$ and $V_2$ are not relatively prime, then we can find an inner function $V_3$ that divides both $V_1$ and $V_2$. In this case, $h_3$ generated by $V_3$ will divide both $h_1$ and $h_2$, leading to a contradiction.
    \par\smallskip
    Now, consider the situation where $\log|O_1|\leq 0$ and $\log |O_2|\leq 0$ representing $ O_1$ and $O_2$ are not mutually singular. In this case,  we define the outer function 
    \begin{align*}
 O_3(\zeta) \vcentcolon = \exp \left( \frac{-\mathrm{i}}{\pi} \int_\R \left( \frac{1}{t-\zeta}-\frac{t}{1+t^2} \right) \min\{\log|O_1|, \log|O_2|\}  \mathrm{d}\lambda(t) \right),
    \end{align*}
    Since for $\lambda$ almost all $x \in \R$ it holds that  $\log|O_3(x)|=\min\{\log|O_1(x)|, \log|O_2(x)|\} \leq  0$ , we conclude that $\|O_3\|_{\mathcal{H}^\infty(\C^+)}\leq 1$. 
   The function $h_3\in \mathcal{S}_0$ generated by $O_3$ will then divide both $h_1$ and $h_2$, leading to a contradiction once again. This concludes the proof.
\end{proof}

Finally, we will need the following description of the boundary behaviour of functions in $\mathcal{B}(h)$.
\begin{proposition}\label{Prop:BoundaryBehaviour}
    Let $h\in \mathcal{S}_0$ and set 
    \begin{align*}
        P_h=\left\{  x \in \R \vcentcolon \lim_{\epsilon \downarrow 0} h(x+\mathrm{i}\epsilon)
    =\lim_{\epsilon \downarrow 0} h(x-\mathrm{i}\epsilon) \right\}. 
    \end{align*}
     Then for every 
     $g\in \mathcal{B}(h)$ it holds that 
\begin{align*}
    \lim_{\epsilon \downarrow 0} g(x+\mathrm{i}\epsilon)
    =\lim_{\epsilon \downarrow 0} g(x-\mathrm{i}\epsilon) \quad  \text{ for $\lambda$ almost all $x\in P_h$}. 
\end{align*}
\end{proposition}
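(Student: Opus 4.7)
If $h\equiv -1$ the kernel $s_h$ vanishes identically and $\mathcal{B}(h)=\{0\}$, making the statement vacuous; otherwise I set $q=C^{-1}(h)\in\mathcal{N}_0$. The plan is to exploit the factorization from Theorem \ref{Thrm:BhLq}: every $g\in\mathcal{B}(h)$ can be written as $g=\tfrac{1+h}{\sqrt{2}}\cdot\tilde g$ with $\tilde g\in\mathcal{L}(q)\subset\mathcal{N}^+(\C\setminus\R)$. Both $h$ and $\tilde g$ thus possess non-tangential boundary values from above and below $\lambda$-a.e. Since the factor $(1+h)/\sqrt{2}$ has coinciding upper and lower boundary limits on $P_h$ by the very definition of $P_h$, the task reduces to proving that for $\lambda$-a.e. $x\in P_h$ the boundary limits of $\tilde g$ from above and below also coincide.

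To analyze $P_h$, the identity $h_{|\C^-}(\zeta)=1/\overline{h_{|\C^+}(\overline\zeta)}$ gives $\lim_{\epsilon\downarrow 0}h(x-i\epsilon)=1/\overline{\lim_{\epsilon\downarrow 0}h(x+i\epsilon)}$ for $\lambda$-a.e. $x$, which combined with the defining condition of $P_h$ forces $|h(x+i0)|=1$ for $\lambda$-a.e. $x\in P_h$. Substituting $h(x+i0)=e^{i\theta}$ into the Cayley formula $q=i\cdot(1-h)/(1+h)$ then yields $q(x+i0)=\tan(\theta/2)\in\R$, and Proposition \ref{Prop:Herglotz} ensures that the density $d\nu_{abs}/d\lambda(x)=\pi^{-1}\,\mathrm{Im}\,q(x+i0)$ vanishes $\lambda$-a.e. on $P_h$. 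Equivalently, the restriction $\nu|_{P_h}$ is singular with respect to Lebesgue measure.

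For the jump of $\tilde g$ I use Proposition \ref{Prop:RealHnfunctions} to write $\tilde g(\zeta)=\int_\R f(t)/(t-\zeta)\,d\nu(t)+bc$ with $f\in L^2(\R,\nu)$ and $c\in\C$, so that a direct computation yields
\[
\tilde g(x+i\epsilon)-\tilde g(x-i\epsilon)=2i\int_\R\frac{\epsilon\,f(t)}{(t-x)^2+\epsilon^2}\,d\nu(t),
\]
which is $2\pi i$ times the Poisson integral along $\R$ of the complex measure $f\,d\nu$; its weighted finiteness $\int|f|(1+t^2)^{-1}\,d\nu<\infty$ follows from Cauchy--Schwarz together with the growth condition $\int d\nu/(1+t^2)<\infty$. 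The classical Fatou-type theorem for Poisson integrals of such measures gives $\lambda$-a.e. convergence of the jump to $2\pi i\cdot f(x)\cdot d\nu_{abs}/d\lambda(x)$, which by the previous paragraph vanishes a.e. on $P_h$. Together with the coinciding boundary limits of $(1+h)/\sqrt{2}$ on $P_h$, this yields the claim. The main obstacle I anticipate is the second step — transporting the defining boundary condition on $h$ via the Cayley transform into an absolute continuity statement about the representing measure $\nu$; once this structural fact is in hand, the analytic content of step three reduces to standard harmonic analysis.
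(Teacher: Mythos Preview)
Your proposal is correct and follows essentially the same route as the paper: both factor $g$ through $\mathcal{L}(q)$ via Theorem \ref{Thrm:BhLq}, observe that $(1+h)$ has matching boundary limits on $P_h$, show that $\nu_{abs}$ vanishes on $P_h$ by transporting the condition through the Cayley transform, and conclude via the jump formula for Cauchy transforms. The only cosmetic difference is that the paper justifies the jump formula by decomposing the integrand into positive and negative real and imaginary parts and applying Proposition \ref{Prop:Herglotz} to each piece, whereas you invoke the Fatou theorem for Poisson integrals of complex measures directly; both arguments arrive at the same identity.
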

\begin{proof}
Since the space $\mathcal{B}(-1)$ is trivial, we can assume without  loss of generality that $h \not\equiv -1$. This allows us to consider the Herglotz-Nevanlinna function $q$ defined as the inverse Cayley transform of $h$, i.e.  
\begin{align}\label{For:Unwichtig1}
    q=C^{-1}(h)=\mathrm{i}  \frac{1-h}{1+h}.
\end{align}
Let $\nu$ be the representing measure of $q$ given by Proposition \ref{Prop:Herglotz}. Then we can represent an arbitrary element $g \in \mathcal{B}(h)$ as
\begin{align*}
g (\zeta)= (h(\zeta)+1)  \left(\int_{\R} \frac{\tilde{g}(t)}{t-\zeta} \mathrm{d}\nu(t)+c \right)
\end{align*}
for some $\tilde{g} \in L^2(\R, \nu)$ and $c \in \C$. The boundary limits from above and below   clearly coincide for  $h+1$ and constants on $P_h$. Therefore, we only have to treat the Cauchy-integral  
\begin{align*}
    m(\zeta)=\int_{\R} \frac{\tilde{g}(t)}{t-\zeta} \mathrm{d}\nu(t).
\end{align*}
Now if $\tilde{g}$ is a non-negative function, then $m$ is a Herglotz-Nevanlinna function with representing measure  $\tilde{g} \mathrm{d}\nu$ (see \cite[Theorem 2.4.2]{LugerOu}). Then we conclude that (Proposition \ref{Prop:Herglotz})
\begin{align*}
       (\tilde{g}\mathrm{d}  \nu)_{abs}(t) = \frac{1}{2\mathrm{i}   \pi}  \left(\lim_{\epsilon \downarrow 0} m(t+\mathrm{i}\epsilon)-\lim_{\epsilon \downarrow 0} m
(t-\mathrm{i}\epsilon)\right).
\end{align*}
The same formula is also true for arbitrary functions $g\in L^2(\R,\mathrm{d}\nu)$. In that case, we decompose $g$ into its positive and negative real and imaginary parts and apply the same argument after pulling out $\mathrm{i}$ and minus signs if necessary.
Finally, we calculate the absolutely continuous part of $\mathrm{d}\nu$  using Proposition \ref{Prop:Herglotz}  
\begin{align*}
    (\mathrm{d}\nu)_{abs}(t)&=\frac{1}{2\mathrm{i}   \pi}  \left(\lim_{\epsilon \downarrow 0} q(t+\mathrm{i}\epsilon)-\lim_{\epsilon \downarrow 0} q
(t-\mathrm{i}\epsilon)\right)\\
&= 
\frac{1}{2\mathrm{i}   \pi}  \left(\lim_{\epsilon \downarrow 0} \mathrm{i}  \frac{1-h(t+\mathrm{i}\epsilon)}{1+h(t+\mathrm{i}\epsilon)}- \lim_{\epsilon \downarrow 0} \mathrm{i}  \frac{1-h(t-\mathrm{i}\epsilon)}{1+h(t-\mathrm{i}\epsilon)} \right).
\end{align*}
This means that $\mathrm{d}\nu_{abs}(t)=0$ on $P_h$ and consequently $(\tilde{g}\mathrm{d}  \nu)_{abs}(t)=0 \ \lambda \text{ a.e. on } \R$, which concludes the proof. 
 \end{proof}

\subsection{Boundary triples and symmetric operators}
In this last preliminary section, we discuss symmetric operators and their boundary mappings, which have been extensively studied in the past \cite{RN07186998219990101}, \cite{Behrndt2020}. These concepts will play no role in the actual construction of minimal realizations for functions of bounded type, but they will be crucial for characterizing the appearing relations afterward.  As always, we start with the definition: 
\begin{definition}
    Let $S$ be a closed operator in a Krein space $\mathcal{K}$. Then $S$ is called symmetric if $S \subset S^+$.
\end{definition}
Boundary mappings are then defined as follows:
\begin{definition}
Let $S$ be a symmetric operator  on a Krein space $\mathcal{K}$. A triple $(\mathcal{H},\Gamma_0,\Gamma_1)$ is called boundary triple for $S^+$, if $\mathcal H$ is a Hilbert space and $\Gamma_0$ and $\Gamma_1$ are bounded mappings from $S^+$ into $\mathcal H$ such that
\begin{itemize}
    \item the mapping $\Gamma_0 \times \Gamma_1$ into $\mathcal H \times \mathcal H$ is surjective 
    \item the abstract Lagrange identity holds: 
    \begin{align*}
        [g,h']_{\mathcal{K}}-[h,g']_{\mathcal{K}}= (\Gamma_1((h,g)),\Gamma_0((h',g')))_{\mathcal{H}}
        -(\Gamma_0((h,g)),\Gamma_1((h',g')))_{\mathcal{H}}
        \quad \forall (h,g), (h',g') \in S^+.
    \end{align*}
\end{itemize}
\end{definition}
Naturally associated to such a boundary triple is the self-adjoint linear relation $A \vcentcolon =\mathrm{ker}(\Gamma_0)$. In what follows, we will always assume that $\varrho(A)\neq \emptyset$ and define the defect subspaces 
\begin{align*}
\mathcal{F}_w\vcentcolon &= \{ g\in \mathcal{K}\ | \ (g,w  g) \in S^+ \}=\mathrm{ker}(S^+-w)\\
\tilde{\mathcal{F}}_w \vcentcolon &= \{ (g,w  g) \ | \ g \in \mathcal{F}_w \} \subset \mathcal{K} \times \mathcal{K} 
        .
\end{align*}
The relation 
$S^+$ can be decomposed as $S^+=A \oplus \tilde{\mathcal{F}}_z$ for $z \in \varrho(A)$, which can be used to show that
\begin{align*}
    (\Gamma_0)_{|\tilde{\mathcal{F}}_z} \vcentcolon \tilde{\mathcal{F}}_z \rightarrow \mathcal H
\end{align*}
is a bijective map. This allows us to define the following objects: 
\begin{definition}
    Let $S$ be a symmetric operator in a Krein space $\mathcal{K}$, $(\mathcal{H},\Gamma_0,\Gamma_1)$ a boundary triple and assume that $\varrho(A) \neq \emptyset$, where $A=\mathrm{ker}(\Gamma_0)$. Then we define 
    \begin{itemize}
        \item the Weyl solution $\gamma \vcentcolon \varrho(A) \rightarrow \mathscr{L}(\mathcal{H},\mathcal{K})$ as 
                \begin{align*}
                \gamma(z) \vcentcolon = P_1 \left((\Gamma_0)_{|\tilde{\mathcal{F}}_z}\right)^{-1} \vcentcolon \mathcal H \rightarrow \mathcal K,
                \end{align*}
                where $P_1$ denotes the projection onto the first component of $\mathcal K \times \mathcal K$. This function satisfies 
            \begin{align*}
                \gamma(w)=(I+(w-z)(A-w)^{-1}) \gamma(z) \quad \forall w,z \in \varrho(A).
            \end{align*}
        \item the Weyl function $m \vcentcolon \varrho(A) \rightarrow \mathscr{L}(\mathcal{H}) $ as 
        \begin{align*}
            m(z) \vcentcolon = \Gamma_1\left((\Gamma_0)_{|\tilde{\mathcal{F}}_z}\right)^{-1}.
        \end{align*}
    \end{itemize}
\end{definition}
If $\mathcal{K}$ is a Hilbert space, then the dimension of $\mathcal{F}_w$ is constant on $\C^+$ and $\C^-$ which allows us to define the deficiency index as $(n,m)=(\mathrm{dim}(\mathcal{F}_\mathrm{i}),\mathrm{dim}(\mathcal{F}_{-\mathrm{i}}))$.
Boundary triples on Hilbert spaces are closely related to realizations of Herglotz-Nevanlinna functions \cite[Section 2]{article55}:
\begin{proposition}\label{Prop:Symop1}
Let $q$ be a Herglotz-Nevanlinna function and $\mathcal{L}(q)$ its modeling reproducing kernel space. Then the multiplication operator by the independent variable, which we denote by $S_q$, is a closed, simple  symmetric operator with deficiency index $(1,1)$. Its adjoint is given by 
\begin{align*}
    S^+=\{ (f,g) \in \mathcal{L}(q) \times  \mathcal{L}(q) \ \big| \exists c,d \in \C \vcentcolon  g(\zeta)-\zeta  f(\zeta)=c-d  q(\zeta) \}.
\end{align*}
Finally, a boundary triple $(\C,\Gamma_0,\Gamma_1)$ is defined by $\Gamma_0=c$ and $\Gamma_1=d$ and the associated Weyl function is $q$.  
\end{proposition}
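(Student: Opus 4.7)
The plan is to use the explicit description of the self-adjoint relation $A$ given by Proposition \ref{Prop:Canreal}, namely
\begin{align*}
A = \bigl\{(h, g) \in \mathcal{L}(q) \times \mathcal{L}(q) : \exists\, c \in \C \text{ with } g(\zeta) - \zeta \, h(\zeta) \equiv c \bigr\},
\end{align*}
and to identify $S_q$ as the restriction of $A$ corresponding to $c = 0$. Since $S_q \subset A = A^+$, the operator $S_q$ is automatically symmetric; it is single-valued (so an operator, not a multi-valued relation), and its closedness follows from the continuity of point evaluations on the reproducing kernel space $\mathcal{L}(q)$: if $(h_n, \zeta h_n) \to (h, g)$ in norm, then pointwise convergence yields $g(w) = \lim_n w\, h_n(w) = w\, h(w)$ for every $w$ in the domain, so $g = \zeta h$.

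To identify $S_q^+$, I would first check the inclusion $\supseteq$ directly on the dense set of kernel functions. For $(f, g) \in \mathcal{L}(q)^2$ with $g - \zeta f = c - d q$ and $h = N_q(\cdot, \overline w)$, verifying $[g, h]_{\mathcal{L}(q)} = [f, \zeta h]_{\mathcal{L}(q)}$ reduces, via the reproducing property and the algebraic identity $(w - \zeta) N_q(\zeta, \overline w) = q(w) - q(\zeta)$, to an explicit calculation in $q$. For the reverse inclusion $\subseteq$, I would compute the defect subspaces: for $w \notin \R$, an element $f \in \mathcal{F}_w = \ker(S_q^+ - w)$ must satisfy $(w - \zeta) f(\zeta) = c - d q(\zeta)$ identically, which forces $c = d q(w)$ for analyticity at $w$ and hence
\begin{align*}
f(\zeta) = d \cdot \frac{q(\zeta) - q(w)}{\zeta - w},
\end{align*}
a scalar multiple of $N_q(\cdot, \overline w)$. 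This gives $\dim \mathcal{F}_w = 1$, deficiency index $(1,1)$, and the von Neumann decomposition $S_q^+ = A \dotplus \tilde{\mathcal{F}}_w$ combined with the one-dimensional nature of $A/S_q$ (parametrized by $c$) forces $S_q^+$ to coincide precisely with the claimed two-parameter family.

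Simplicity of $S_q$ is then immediate, since the kernel functions $N_q(\cdot, \overline w) \in \mathcal{F}_w$ span a dense subspace of $\mathcal{L}(q)$ by the reproducing kernel property. For the boundary triple, surjectivity of $\Gamma_0 \times \Gamma_1 : S_q^+ \to \C^2$ follows from the independent parametrization by $(c, d)$, and the abstract Lagrange identity reduces on defect subspaces to the reproducing kernel computation $[N_q(\cdot, \overline w), N_q(\cdot, \overline{w'})]_{\mathcal{L}(q)} = N_q(\overline{w'}, \overline w)$, with the self-adjoint part $A$ contributing nothing. The main obstacle is the careful bookkeeping of sign and index conventions so as to identify the Weyl function as $q$ precisely under the definition $m(z) = \Gamma_1 \circ (\Gamma_0|_{\tilde{\mathcal{F}}_z})^{-1}$; the parametrization derived above yields $(\Gamma_0, \Gamma_1) = (d q(w), d)$ on $\tilde{\mathcal{F}}_w$, so matching the statement requires keeping the precise roles of $\Gamma_0$ and $\Gamma_1$ aligned with the orientation of the ratio $q(w)$.
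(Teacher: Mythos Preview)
The paper does not give its own proof of this proposition; it is quoted from \cite[Section 2]{article55}. So there is nothing to compare against directly, but let me point out where your outline needs repair.

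Your verification of the inclusion ``$\supseteq$'' by testing against kernel functions $h=N_q(\cdot,\overline w)$ does not work, because such $h$ are in general \emph{not} in $\mathrm{dom}(S_q)$. Indeed,
\[
\zeta\,N_q(\zeta,\overline w)=q(\zeta)-q(w)+w\,N_q(\zeta,\overline w),
\]
and $q$ itself is typically not an element of $\mathcal L(q)$ (take $q(\zeta)=\zeta$). The adjoint relation requires $[g,h]=[f,\zeta h]$ for all $(h,\zeta h)\in S_q$, not for $h$ in a set that is merely dense in $\mathcal L(q)$. Your computation of the defect subspace then becomes circular: you assume every $(f,wf)\in S_q^+$ already satisfies $(w-\zeta)f=c-dq$, which is precisely the description of $S_q^+$ you are trying to establish.

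Both gaps are closed by reversing the order of the argument. From $S_q\subset A$ with $\dim(A/S_q)=1$ (the functional $(h,g)\mapsto g(w)-w\,h(w)$ is bounded and nonzero on $A$) one gets deficiency index $(1,1)$ by the standard Hilbert-space fact, hence $\dim\mathcal F_w=1$. Now verify \emph{directly} that $N_q(\cdot,\overline w)\in\mathcal F_w$: for any $(h,\zeta h)\in S_q$,
\[
[\,w\,N_q(\cdot,\overline w),h\,]-[\,N_q(\cdot,\overline w),\zeta h\,]
=w\,\overline{h(\overline w)}-\overline{\overline w\,h(\overline w)}=0.
\]
Thus $\mathcal F_w=\mathrm{span}\{N_q(\cdot,\overline w)\}$, and the von Neumann decomposition $S_q^+=A\dotplus\tilde{\mathcal F}_w$ now yields the claimed two-parameter description without circularity. (This is essentially the argument the paper carries out later, in the material after Section~5, for the Krein-space analogue $M_\zeta$ on $\mathcal L(f)$.)

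Finally, your worry about the labelling of $\Gamma_0$ and $\Gamma_1$ is justified: with the assignment $\Gamma_0=c$, $\Gamma_1=d$ as written, one has $c=dq(w)$ on $\tilde{\mathcal F}_w$ and hence $m(w)=\Gamma_1(\Gamma_0|_{\tilde{\mathcal F}_w})^{-1}=1/q(w)$, not $q(w)$. The paper's own later computation (for $M_\zeta$) uses the opposite convention $\Gamma_0=d$, $\Gamma_1=c$, which does give Weyl function $q$; the statement of Proposition~\ref{Prop:Symop1} appears to have the two maps interchanged.
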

Recall that a symmetric operator in a Hilbert space $\mathcal H$ is simple or completely non-self-adjoint, if there exists no non-trivial orthogonal decomposition $\mathcal H =\mathcal{H}_1\oplus \mathcal{H}_2$ such that $S$ decomposes as $S=S_1 \oplus S_2$ and $S_2$ is self-adjoint in $\mathcal{H}_2$.
This Proposition has the following inverse \cite[Section 2]{article55}: 
\begin{proposition}\label{Prop:Symop2}
    Let $S$ be a closed, simple symmetric operator in a Hilbert space $\mathcal H$ with deficiency index $(1,1)$. Then there exists a boundary triple $(\C,\Gamma_0,\Gamma_1)$. Moreover, given such a boundary triple, then the Weyl function $q$ is a Herglotz-Nevanlinna function and the operator 
    \begin{align*}
        F \vcentcolon \mathcal H \rightarrow \mathcal L(q), \quad x \mapsto [x, \gamma(\overline{\zeta})]_{\mathcal H} \quad \zeta \in \C \setminus \R
    \end{align*}
    is an isometric isomorphism such that $S=F^{-1} S_q F$. 
\end{proposition}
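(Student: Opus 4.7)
The plan proceeds in three stages: construct a boundary triple for $S^+$, show the Weyl function is Herglotz-Nevanlinna via positive-definiteness of its Nevanlinna kernel, and build $F$ as the map sending defect vectors to reproducing kernels and verify the intertwining.

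For stage one, since $S$ is closed symmetric with equal deficiency indices $(1,1)$, von Neumann's extension theorem provides a self-adjoint extension $A$ of $S$ in $\mathcal H$, and $\varrho(A)\neq\emptyset$. Fix any $z_0\in\varrho(A)\cap\C^+$ and a nonzero $e\in\mathcal F_{z_0}$. The direct-sum decomposition $S^+ = A \dotplus \tilde{\mathcal F}_{z_0}$ lets us define $\Gamma_0\colon S^+\to\C$ by $\Gamma_0((h,g))=c$, where $(h,g)=(h_A,g_A)+c\cdot(e,z_0 e)$ is the unique representation, and $\Gamma_1$ is specified by requiring the abstract Lagrange identity to hold. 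Since $\tilde{\mathcal F}_{z_0}$ is one-dimensional, this reduces to fixing a single scalar constant, and by construction $\ker(\Gamma_0)=A$ and $(\Gamma_0,\Gamma_1)$ is surjective onto $\C\times\C$.

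For stage two, applying the Lagrange identity to the defect pairs $(\gamma(z),z\gamma(z))$ and $(\gamma(w),w\gamma(w))$, together with the normalizations $\Gamma_0\gamma(z)=1$ and $\Gamma_1\gamma(z)=m(z)$, yields the classical identity
\begin{align*}
[\gamma(z),\gamma(w)]_{\mathcal H}=\frac{m(z)-\overline{m(w)}}{z-\overline w}=N_m(z,w).
\end{align*}
Hence $N_m$ is a Gram kernel of vectors in a Hilbert space and in particular positive definite, and Proposition \ref{Prop:Herglotz}(C) identifies $m$ as a Herglotz-Nevanlinna function; in particular $m(\overline z)=\overline{m(z)}$.

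For stage three, using $m(\overline z)=\overline{m(z)}$ one computes
\begin{align*}
F(\gamma(\overline\zeta))(\eta)=[\gamma(\overline\zeta),\gamma(\overline\eta)]_{\mathcal H}=N_m(\overline\zeta,\overline\eta)=N_m(\eta,\zeta),
\end{align*}
so $F$ sends the defect vector $\gamma(\overline\zeta)$ to the reproducing kernel $N_m(\cdot,\zeta)$ of $\mathcal L(q)$. The reproducing property then gives $[F(\gamma(\overline\zeta)),F(\gamma(\overline\eta))]_{\mathcal L(q)}=N_m(\overline\zeta,\overline\eta)=[\gamma(\overline\zeta),\gamma(\overline\eta)]_{\mathcal H}$, so $F$ is isometric on the linear span of defect vectors. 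By simplicity of $S$ this span is dense in $\mathcal H$, and the kernel functions are dense in $\mathcal L(q)$, so $F$ extends to an isometric isomorphism. Using $\gamma(w)=(I+(w-z)(A-w)^{-1})\gamma(z)$ and the fact that the resolvent of the canonical self-adjoint extension $A_q$ of $S_q$ in $\mathcal L(q)$ acts as the difference-quotient $D_w$ (Proposition \ref{Prop:Canreal}), a direct computation with the kernels $N_m(\cdot,\overline z)$ shows $F$ intertwines $(A-w)^{-1}$ with $(A_q-w)^{-1}$, so $F A F^{-1}=A_q$; restricting to the common boundary-triple domain characterizing $S$ and $S_q$ (as given by Proposition \ref{Prop:Symop1}) then yields $F S F^{-1}=S_q$.

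The main obstacle will be stage one: although existence of a boundary triple for a symmetric operator with equal finite deficiency indices is classical, carrying out the explicit construction of $\Gamma_1$ and verifying the Lagrange identity on the full domain $S^+$ requires a careful treatment of the cross-pairing between $A$ and $\tilde{\mathcal F}_{z_0}$. Once the reproducing-kernel identity $N_m(z,w)=[\gamma(z),\gamma(w)]_{\mathcal H}$ is in hand, the remaining assertions follow by essentially formal reproducing-kernel calculus together with simplicity of $S$.
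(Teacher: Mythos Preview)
The paper does not prove this proposition; it is stated as a known preliminary result with a citation to \cite[Section 2]{article55}. So there is no proof in the paper to compare against, and your attempt must be judged on its own terms.

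Your outline is correct and follows the standard route. Stage two is clean: the identity $[\gamma(z),\gamma(w)]_{\mathcal H}=N_m(z,w)$ is exactly how one sees that the Weyl function is Herglotz--Nevanlinna, and your stage-three argument (defect vectors map to kernel functions, isometry on their span, density by simplicity, intertwining of resolvents via the defect-function recursion and Proposition~\ref{Prop:Canreal}) is the expected one. The kernel computation $N_m(\overline\zeta,\overline\eta)=N_m(\eta,\zeta)$ is correct.

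Stage one, as you yourself flag, is the weak point. The decomposition $S^+=A\dotplus\tilde{\mathcal F}_{z_0}$ lets you read off $\Gamma_0$, but ``$\Gamma_1$ is specified by requiring the abstract Lagrange identity to hold'' is not a definition: the Lagrange identity alone does not pin down $\Gamma_1$ uniquely, and you have not shown that any choice making it hold also makes $(\Gamma_0,\Gamma_1)$ surjective onto $\C^2$. The standard fix is either to invoke the classical existence theorem for boundary triples directly (Kochubei, Gorbachuk--Gorbachuk, or Derkach--Malamud), or to use the von~Neumann decomposition $S^+=S\dotplus\tilde{\mathcal F}_{z_0}\dotplus\tilde{\mathcal F}_{\overline{z_0}}$ with normalized defect elements $e_\pm$ and set $\Gamma_0,\Gamma_1$ to be explicit linear combinations of the $c_\pm$-coefficients. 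Either route closes the gap; as written, your stage one is a pointer to a known construction rather than a construction.
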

We point out, that for fixed $x\in \mathcal{H}$ the function $\zeta \mapsto  [x, \gamma(\overline{\zeta})]$ is an element of $\mathcal L(q)$ and thus in particular of $\mathcal{N}(\C\setminus \R)$. This will be important later on. 
\color{black}

\section{Minimal realizations for functions of bounded type}
\label{Sec:Construction}
Now we turn to the proof of our main theorem: 
\begin{theorem}\label{Thrm:Main1}
    Let $f \in \mathcal{N}_{sym}$. Then there exists a modeling reproducing kernel space $\mathcal{L}(f)$ on $U_{ext}(f)$ for $f$. In other words, $f$ has a minimal  realization $(\mathcal{L}(f),A,v)$ with $\varrho(A)=U_{ext}(f)$. 
\end{theorem}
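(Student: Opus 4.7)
The plan is to execute the three-step strategy sketched in the introduction. \emph{First}, for every $f \in \mathcal{N}_{sym}$ I would produce a Helson-type representation
\begin{equation*}
f \;=\; i \cdot \frac{h_2 - h_1}{h_2 + h_1} \qquad \text{with} \qquad h_1, h_2 \in \mathcal{S}_0, \quad \mathcal{B}(h_1) \cap \mathcal{B}(h_2) = \{0\}.
\end{equation*}
Writing $f_{|\C^+} = f_1/f_2$ with $f_1, f_2 \in \mathcal{H}^\infty(\C^+)$ of norm at most $1$, the natural building blocks are $g_\pm := f_2 \pm i f_1 \in \mathcal{H}^\infty(\C^+)$. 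After dividing out the greatest common inner divisor and the common part of the outer factors of $g_+$ and $g_-$, the normalized quotients extend canonically to two functions in $\mathcal{S}_0$ that are relatively prime in the sense of Section~2.5 and whose ratio reproduces $f$. The trivial intersection $\mathcal{B}(h_1) \cap \mathcal{B}(h_2) = \{0\}$ then follows from the general fact, announced in the introduction, that two $\mathcal{B}(h)$-spaces intersect trivially precisely when their symbols are relatively prime.

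\emph{Second}, granted this representation, the summation property of reproducing kernel Krein spaces applied to $\mathcal{B}(h_1) \oplus (-\mathcal{B}(h_2))$ produces kernel $s_{h_1} - s_{h_2}$, and Proposition~\ref{Prop:Conj} then gives that
\begin{equation*}
\mathcal{L}(f) \;:=\; \frac{1}{h_1 + h_2} \cdot \bigl(\mathcal{B}(h_1) \oplus (-\mathcal{B}(h_2))\bigr)
\end{equation*}
is a reproducing kernel Krein space on $U_{ext}(f)$. A direct algebraic manipulation, substituting the Helson representation of $f$ into the definition of $N_f$ and expanding, shows that the resulting kernel $\frac{s_{h_1}(\zeta,w) - s_{h_2}(\zeta,w)}{(h_1(\zeta)+h_2(\zeta))\,\overline{(h_1(w)+h_2(w))}}$ agrees with $N_f(\zeta,w)$. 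The identification of the domain uses that the zeros of $h_1 + h_2$ in $\C \setminus \R$ correspond exactly to the poles of $f$, so that $\mathcal{L}(f)$ is defined on all of $U_{ext}(f)$.

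\emph{Third}, I verify that $D_w$ is bounded on $\mathcal{L}(f)$ for every $w \in U_{ext}(f)$. By Leibniz's identity from Definition~\ref{Def:diffquot}, for $g \in \mathcal{B}(h_j)$,
\begin{equation*}
D_w\!\left(\tfrac{g}{h_1+h_2}\right) \;=\; \tfrac{1}{h_1+h_2}\, D_w(g) \;+\; g(w)\, D_w\!\left(\tfrac{1}{h_1+h_2}\right),
\end{equation*}
and Theorem~\ref{Thrm:Basic1} gives $D_w(g) \in \mathcal{B}(h_j)$, so the first summand already lies in $\mathcal{L}(f)$. The question therefore reduces to $D_w\!\left(\tfrac{1}{h_1+h_2}\right) \in \mathcal{L}(f)$, which follows from the direct computation
\begin{equation*}
D_w\!\left(\tfrac{1}{h_1+h_2}\right)(\zeta) \;=\; \frac{-1}{(h_1(\zeta)+h_2(\zeta))(h_1(w)+h_2(w))}\bigl( D_w(h_1)(\zeta) + D_w(h_2)(\zeta) \bigr),
\end{equation*}
combined with the identity $D_w(h_j) \in \mathcal{B}(h_j)$ from equation~\eqref{For:WichtigesElement}. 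Well-definedness on $\mathcal{L}(f)$ together with boundedness of point evaluations on $\mathcal{B}(h_j)$ yield boundedness of $D_w$ by the closed graph theorem, and Proposition~\ref{Prop:Canreal} then packages the construction into the minimal realization $(\mathcal{L}(f), A, N_f(\cdot, \overline{\zeta_0}))$ with $\varrho(A) = U_{ext}(f)$. The principal obstacle is the first step — establishing the Helson-type representation and, with it, the relatively-prime characterization of trivial intersection for $\mathcal{B}(h)$-spaces in the full generality of $\mathcal{S}_0$ (not merely for inner functions, as in the classical setting).
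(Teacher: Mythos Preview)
Your overall strategy matches the paper's, and you correctly identify the Helson-type representation and the relatively-prime characterization of trivial intersection as the principal difficulty. However, there is a genuine gap in your treatment of the \emph{domain}, and it is precisely this gap that consumes most of the technical effort in the paper.

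You assert that $\mathcal{L}(f)$ is a reproducing kernel Krein space on all of $U_{ext}(f)$, and that $D_w$ is bounded for every $w \in U_{ext}(f)$. But the construction $\frac{1}{h_1+h_2}\cdot(\mathcal{B}(h_1)\oplus(-\mathcal{B}(h_2)))$ naturally lives on a strictly smaller set: one must exclude not only the zeros of $h_1+h_2$, but also the poles of $h_1,h_2$ in $\C^-$ (where the $\mathcal{B}(h_j)$-functions are a priori undefined) and the zeros of $h_1,h_2$ (where Theorem~\ref{Thrm:Basic1} does \emph{not} guarantee that $D_w$ is bounded on $\mathcal{B}(h_j)$ --- your Leibniz step invokes exactly this). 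The paper calls this smaller set $\Omega(f)$ and works there first. Two further ingredients are then needed to reach $U_{ext}(f)$: (i) for the real line, one must show that if $f$ continues analytically through an interval $(a,b)\subset\R$ then \emph{both} $h_1$ and $h_2$ do as well --- this is Lemma~\ref{BoundaryBehaviour} and uses the relative primality in an essential way; (ii) the remaining exceptional points in $\C\setminus\R$ (zeros and poles of the $h_j$) are then isolated, and one needs an abstract removable-singularity argument for resolvents (Lemma~\ref{Lem:IsoPoints}) to conclude $\varrho(A)=U_{ext}(f)$. Your sentence ``the zeros of $h_1+h_2$ in $\C\setminus\R$ correspond exactly to the poles of $f$'' is not enough: $f$ is perfectly analytic (equal to $\pm i$) at poles of $h_j$, yet those points are not in the natural domain of the construction, and $f$ may extend across pieces of $\R$ where it is not obvious that $h_1,h_2$ do.
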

As outlined in the introduction, we base our construction on a suitable pair of $\mathcal{B}(h)$ spaces. In order to do so, we  establish a connection between a function $f \in \mathcal{N}_{sym}$ and two functions  $h_1, h_2 \in \mathcal{S}_0$ via the Cayley transform:
\begin{proposition}\label{Prop:MainTech}
    Let $f \in \mathcal{N}_{sym}$ and assume that $f$ is non-constant on $\C^+$. Then there exist functions $h_1, h_2 \in \mathcal{S}_0$ which are relatively prime  such that 
    \begin{align*}
        f = \mathrm{i}  \frac{h_2-h_1}{h_2+h_1}.
    \end{align*}
\end{proposition}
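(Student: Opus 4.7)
The plan is to transform the problem via the Cayley map of Proposition \ref{Prop:Cayley} into a factorization problem for $C(f) \in \mathcal{N}(\C \setminus \R)$ as a quotient $h_1/h_2$ of two functions in $\mathcal{S}_0$ that are relatively prime, and then invert $C$ to recover the desired representation of $f$.

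Since $f_{|\C^+} \in \mathcal{N}(\C^+)$, one can write $f_{|\C^+} = g_1/g_2$ with $g_1,g_2 \in \mathcal{H}^\infty(\C^+)$; after a common rescaling we may assume $\|g_j\|_{\mathcal{H}^\infty(\C^+)} \leq 1/2$. Set $h_1^+ := g_2 + i g_1$ and $h_2^+ := g_2 - i g_1$. Then $h_j^+ \in \mathcal{H}^\infty(\C^+)$ with norms at most $1$, the identity $h_1^+/h_2^+ = (1+if)/(1-if) = C(f)_{|\C^+}$ holds on $\C^+$, and both $h_j^+$ are non-zero since the non-constant $f$ is not identically equal to $\pm i$.

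The central step is to extract a common factor of $h_1^+, h_2^+$ in $\mathcal{H}^\infty(\C^+)$ so that the quotients satisfy both bullet points of the relative-primeness characterisation for $\mathcal{S}_0$ recalled in Section \ref{Sec:Bh spaces}. Writing the inner--outer decompositions $h_j^+ = V_j \cdot O_j$, I would let $V$ be the greatest common inner divisor of $V_1, V_2$ (Blaschke part carried by the shared zeros, singular inner part with measure $\mu_1 \wedge \mu_2$, and exponential factor $\exp(i\min(\alpha_1,\alpha_2)\zeta)$), and let $O$ be the outer function whose boundary modulus equals $\max(|O_1|,|O_2|)$ $\lambda$-a.e.\ on $\R$ (the integrability $\int \log\max(|O_1|,|O_2|)/(1+t^2)\,dt > -\infty$ is inherited from $O_1, O_2$). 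The product $H^+ := V \cdot O$ divides both $h_j^+$: indeed $V_j/V$ is inner and $O_j/O$ is outer with $|O_j/O| \leq 1$ on $\R$, so $\tilde h_j^+ := h_j^+/H^+$ lies in $\mathcal{H}^\infty(\C^+)$ with norm $\leq 1$ and unchanged quotient $\tilde h_1^+/\tilde h_2^+ = C(f)_{|\C^+}$. By construction $V_1/V$ and $V_2/V$ are relatively prime as inner functions, and $\log|O_j/O|\cdot d\lambda$ vanishes wherever $|O_j| = \max(|O_1|,|O_2|)$, so the two leftover log-measures are carried by disjoint subsets of $\R$ and are mutually singular.

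Finally, I extend each $\tilde h_j^+$ to $h_j \in \mathcal{S}_0$ by the reflection rule $h_j(\zeta) = 1/\overline{\tilde h_j^+(\overline\zeta)}$ on $\C^-$. Relative primeness of $h_1, h_2$ in $\mathcal{S}_0$ then follows directly from the characterisation in Section \ref{Sec:Bh spaces}, since those conditions concern only the $\C^+$-restrictions. On $\C^+$ the identity $h_1/h_2 = C(f)$ holds by construction; on $\C^-$ a short computation gives $(h_1/h_2)(\zeta) = \overline{(\tilde h_2^+/\tilde h_1^+)(\overline\zeta)} = 1/SR(C(f)_{|\C^+})(\zeta)$, which equals $C(f)_{|\C^-}(\zeta)$ by Proposition \ref{Prop:Cayley}. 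Inverting the Cayley map yields $f = i(h_2-h_1)/(h_2+h_1)$, as desired. The main obstacle is the factor-extraction step: showing that $H^+$, built from the inner GCD and from $\max(|O_1|,|O_2|)$ on the outer side, really is an admissible common divisor in $\mathcal{H}^\infty$ (so that both quotients remain in the unit ball) while simultaneously producing the inner-prime / outer-mutually-singular pair demanded by the $\mathcal{S}_0$ characterisation.
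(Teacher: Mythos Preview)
Your proof is correct, and the overall strategy (Cayley-transform, factor $C(f)_{|\C^+}$ as a quotient of two contractive $\mathcal{H}^\infty$ functions that are relatively prime in the $\mathcal{S}_0$ sense, then extend by reflection) matches the paper. The difference lies in how the relatively prime factorization is produced.

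The paper invokes directly the canonical factorization of a Nevanlinna-class function: $C(f)_{|\C^+}=\frac{V_1}{V_2}\cdot F$ with $V_1,V_2$ relatively prime inner and $F$ outer, and then performs the Hahn decomposition of the single density $\log|F|=m_+-m_-$; the outer pieces $O_1,O_2$ are built from $m_-$ and $-m_+$, so mutual singularity is automatic and nothing has to be divided out. You instead start from an \emph{arbitrary} quotient $h_1^+/h_2^+=C(f)_{|\C^+}$ (obtained as $(g_2+ig_1)/(g_2-ig_1)$) and then remove the greatest common $\mathcal{H}^\infty$-divisor, taking the inner GCD on the inner side and the outer function with boundary modulus $\max(|O_1|,|O_2|)$ on the outer side. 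The paper's route is shorter because the canonical factorization already delivers relatively prime inner parts and a single outer function; your route is a bit more hands-on but avoids quoting the $\mathcal{N}(\C^+)$ canonical factorization as a black box.

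The ``main obstacle'' you flag at the end is not a genuine obstacle. Since $|O_j/O|=|O_j|/\max(|O_1|,|O_2|)\leq 1$ $\lambda$-a.e., each $\tilde h_j^+$ is contractive in $\mathcal{H}^\infty(\C^+)$; and at $\lambda$-a.e.\ boundary point one of $\log|O_1/O|,\log|O_2/O|$ vanishes, so the two log-measures are supported on disjoint sets and hence mutually singular. Thus the factor-extraction step is fully justified.
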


\begin{proof}
    Let $f\in \mathcal{N}_{sym}$ be such that $f_{|\C^+}$ is non-constant, and consider its Cayley transform  $C(f) \in \mathcal{N}(\C \setminus \R)$.
    We can express $C(f)_{|\C_+}\in \mathcal{N}(\C^+)$ as 
\begin{align*}
    C(f)_{|\C_+}= \frac{V_1}{V_2} \cdot F = \frac{V_1}{V_2} \cdot \exp\left( \frac{-\mathrm{i}}{\pi} \int_{\R}  \left(\frac{1}{t-\zeta}-\frac{t}{1+t^2}\right)   m(t)  \mathrm{d} \lambda(t)  \right)
\end{align*}
where $V_1$ and $V_2$ are inner functions which are relatively prime and $F$ is the outer factor. 
The absolutely continuous measure $m(x)  \mathrm{d} \lambda$ can be decomposed into its positive and negative part, i.e. into $m_+(x)  \mathrm{d} \lambda $ and $m_-(x)  \mathrm{d} \lambda $, which are mutually singular. We  define outer functions $O_1$ and $O_2$ as
\begin{align*}
    O_1(\zeta)&= \exp\left( \frac{-\mathrm{i}}{\pi} \int_{\R}  \left( \frac{1}{t-\zeta}-\frac{t}{1+t^2} \right)  m_-(t)  \mathrm{d} \lambda(t)  \right) \\  
    O_2(\zeta)&= \exp\left( \frac{-\mathrm{i}}{\pi} \int_{\R}  \left( \frac{1}{t-\zeta}-\frac{t}{1+t^2} \right)    (-m_+(t))  \mathrm{d} \lambda(t) \right).
\end{align*}
Note that $\|V_i \cdot O_i\|_{\mathcal{H}^\infty(\C^+)} \leq 1$, since $\log|O_1(x)|=m_-(x)\leq 0$ and $\log|O_2(x)|=-m_+(x)\leq 0$.
Moreover, by construction it holds that
\begin{align*}
    C(f)_{|\C_+}= \frac{V_1    \cdot O_1}{V_2 \cdot O_2}.
\end{align*}
We define $h_1$ and $h_2$  as the elements in $\mathcal{S}_0$ generated by $V_i \cdot O_i$.  The functions $h_1$ and $h_2$   are relatively prime by construction. And since $C(f)_{|\C^-}=\frac{1}{SR(C(f)_{|\C^+})}$ (Proposition \ref{Prop:Cayley}), it follows that 
\begin{align*}
    C(f)= \frac{h_1}{h_2} \quad \text{ or equivalently } \quad
    f=C^{-1} (C(f)) = C^{-1}\left( \frac{h_1}{h_2} \right) =  \mathrm{i}  \frac{h_2-h_1}{h_2+h_1},
\end{align*}
which concludes the proof.
\end{proof}
In summary, every function $f\in \mathcal{N}_{sym}$ can be represented in terms of two functions in $\mathcal{S}_0$ which are relatively prime. This allows us to generalize Helson's representation theorem for  functions with real boundary values \cite[Theorem 3.1.]{articleRealpositive}:
\begin{theorem}\label{basic1.5}
Let $h_1$ and $h_2$ be elements of  $\mathcal{S}_0$. Then $h_1$ and $h_2$ are relatively prime if and only if 
\begin{align*}
    \mathcal{B}(h_1)\cap\mathcal{B}(h_2)=\{0\}.
\end{align*}
In particular, every function $f\in \mathcal{N}_{sym}$ which is non-constant on $\C^+$ has a representation of the form 
\begin{align*}
       f = \mathrm{i}  \frac{h_2-h_1}{h_2+h_1} \quad \text{ and } \quad
    \mathcal{B}(h_1)  \cap \mathcal{B}(h_2)= \{ 0\}.
\end{align*}
\end{theorem}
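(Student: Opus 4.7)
The strategy splits naturally along the two directions of the equivalence, with one being a pure reproducing-kernel manipulation and the other requiring a boundary-value analysis.

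\textbf{Non-coprime implies non-trivial intersection.} Let $h_3 \in \mathcal{S}_0$ be a non-trivial common divisor, so $h_i = h_3 \cdot g_i$ with $g_i \in \mathcal{S}_0$. A direct computation yields the kernel identity
\[
    s_{h_i}(z,w) \;=\; s_{h_3}(z,w) \;+\; h_3(z)\,\overline{h_3(w)}\, s_{g_i}(z,w),
\]
which expresses $s_{h_i}$ as a sum of two positive definite kernels. Aronszajn's inclusion theorem for reproducing kernels then provides a contractive embedding $\mathcal{B}(h_3) \hookrightarrow \mathcal{B}(h_i)$ for $i=1,2$, so $\mathcal{B}(h_3) \subset \mathcal{B}(h_1) \cap \mathcal{B}(h_2)$. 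Non-triviality of $h_3$ means $|h_3(w_0)| < 1$ for some $w_0 \in \C^+$, hence $s_{h_3}(w_0, w_0) > 0$ and $\mathcal{B}(h_3) \neq \{0\}$.

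\textbf{Coprime implies trivial intersection.} This is the heart of the proof. Decompose $h_i = V_i O_i$ into inner and outer parts. Relative primality in $\mathcal{S}_0$ gives that $V_1, V_2$ are coprime inner functions and the non-positive densities $\log|O_1|, \log|O_2|$ are mutually singular. Since $|V_i(x)| = 1$ for $\lambda$-a.e.\ $x \in \R$, the set $P_{h_i}$ appearing in Proposition \ref{Prop:BoundaryBehaviour} equals $\{x : \log|O_i(x)| = 0\}$ modulo a null set, and mutual singularity forces $\lambda\bigl(\R \setminus (P_{h_1} \cup P_{h_2})\bigr) = 0$. Thus for any $f \in \mathcal{B}(h_1) \cap \mathcal{B}(h_2)$, Proposition \ref{Prop:BoundaryBehaviour} applied to each factor shows that the upper and lower boundary limits of $f$ agree $\lambda$-a.e.\ on $\R$, making $f|_{\C^+}$ and $f|_{\C^-}$ pseudocontinuations of each other.

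To conclude that $f \equiv 0$, I exploit the fact that restricting $s_{h_i}$ to $\C^+ \times \C^+$ recovers the de Branges--Rovnak kernel $k_{h_i|_{\C^+}}$: Aronszajn's restriction principle then provides a contractive map $\mathcal{B}(h_i) \to \mathcal{H}(h_i|_{\C^+}) \subset \mathcal{H}^2(\C^+)$, and the analogous argument for the lower half plane gives $f|_{\C^-} \in \mathcal{H}^2(\C^-) \subset \mathcal{N}^+(\C^-)$. Proposition \ref{Prop:RegRes} then forces $f \equiv 0$ on $\C \setminus \R$, and hence on the connected extended domain by analyticity. The ``in particular'' statement follows immediately by combining the equivalence with Proposition \ref{Prop:MainTech}, which produces relatively prime $h_1, h_2 \in \mathcal{S}_0$ representing any non-constant $f \in \mathcal{N}_{sym}$.

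The main obstacle I foresee is the rigorous justification of the restriction inclusion $\mathcal{B}(h)|_{\C^\pm} \subset \mathcal{H}^2(\C^\pm)$, especially in the non-extreme case where Proposition \ref{Prop:Rovnak} does not apply directly; Aronszajn's restriction theorem handles the general case, but one must also verify that kernels $s_h(\cdot, w)$ with $w$ in the opposite half-plane land in $\mathcal{H}(h|_{\C^\pm})$ upon restriction, which can be checked through the identity $s_h(z,w) = \tfrac{1}{i\,h(\bar w)}\, D_{\bar w}(h)(z)$ for $z \in \C^+, w \in \C^-$ together with $D_{\bar w}(h) \in \mathcal{B}(h)$ from Theorem \ref{Thrm:BhLq}.
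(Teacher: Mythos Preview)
Your argument for the first direction matches the paper and is correct.

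The second direction contains a genuine gap that cannot be repaired within your framework. The claim that ``the analogous argument for the lower half plane gives $f|_{\C^-} \in \mathcal{H}^2(\C^-)$'' is false in general. For $z,w \in \C^-$ one has $|h| \geq 1$, and a direct computation gives
\[
  s_h(z,w) \;=\; h(z)\,\overline{h(w)}\cdot\frac{1 - \psi(z)\overline{\psi(w)}}{i(z-\bar w)}, \qquad \psi := \frac{1}{h|_{\C^-}} = SR(h|_{\C^+}),
\]
so the restricted kernel is a \emph{conjugation} of the de Branges--Rovnyak kernel $k_\psi$ on $\C^-$, not $k_\psi$ itself. Aronszajn's restriction therefore identifies $\mathcal{B}(h)|_{\C^-}$ with $h|_{\C^-} \cdot \mathcal{H}(\psi)$, and since $h|_{\C^-} = 1/SR(V\cdot O)$, a nontrivial inner factor $V$ forces this space out of $\mathcal{N}^+(\C^-)$ --- Proposition~\ref{Schur:Basic2}(iii) states precisely this in the pure model-space case. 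Thus Proposition~\ref{Prop:RegRes} is not applicable. The obstacle you flag is not the extreme/non-extreme dichotomy (Aronszajn does handle the $\C^+$-side uniformly) but rather the fundamental asymmetry between the two half-planes created by the inner part of $h$.

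This is exactly why the paper first isolates the purely outer case (Lemma~\ref{Lem:IntOuter}), where $h|_{\C^-}$ is outer and your argument essentially coincides with the one given there, and then reduces the general case to it via the decomposition $\mathcal{B}(h_i) = \mathcal{B}(o_i) + o_i \cdot \mathcal{B}(v_i)$. Writing $g = f_i + o_i k_i$ for each $i$, one argues by cases on whether $g|_{\C^-} \in \mathcal{N}^+(\C^-)$: if so, both $k_i$ must vanish and the outer lemma finishes; if not, both $SR(V_1)$ and $SR(V_2)$ must cancel the obstruction in $g|_{\C^-}$, which by Lemma~\ref{Lem:Div} forces a common inner divisor, contradicting coprimality. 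Your pseudocontinuation step via Proposition~\ref{Prop:BoundaryBehaviour} is correct and is Step~1 of Lemma~\ref{Lem:IntOuter}, but the presence of inner factors requires this further case analysis that your proposal omits.
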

We will give a proof in Section 4. For the moment, we just mention that de Branges studied this  intersection problem in the 1960s and obtained some partial results in the ``only if'' direction \cite[Theorem 5 and 6]{edsjsr.10.2307.199375419630301}.  In Section 4, we will also introduce the following technical lemma crucial for addressing complexities associated with the real line:
\begin{lemma}\label{BoundaryBehaviour}
Let $h_1$ and $h_2$ be functions in $\mathcal{S}_0$ which are relatively prime. Then the function
\begin{align*}
     \frac{h_1}{h_2}
\end{align*}
can be analytically continued through an open interval $(a,b)\subset \R$ if and only if both $h_1$ and $h_2$ can be analytically continued through $(a,b)$.
\end{lemma}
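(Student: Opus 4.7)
The ``if'' implication is immediate: if both $h_1$ and $h_2$ extend meromorphically across $(a,b)$, so does their quotient. For the ``only if'' direction, suppose $h_1/h_2$ admits a meromorphic extension $F$ on a complex neighborhood $W$ of $(a,b)$. Writing $h_i^+(x) = \lim_{\epsilon \downarrow 0} h_i(x + i\epsilon)$, the $\mathcal{S}_0$-symmetry $(h_i)_{|\C^-} = 1/SR((h_i)_{|\C^+})$ gives the boundary values of $F$ from above and below as $h_1^+/h_2^+$ and $\overline{h_2^+}/\overline{h_1^+}$ respectively. Since $F$ is meromorphic on $W$, these must agree $\lambda$-a.e.\ on $(a,b)$, forcing $|h_1^+| = |h_2^+|$ $\lambda$-a.e.\ on $(a,b)$. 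My plan is to exploit this identity to successively peel off the outer, Blaschke, and singular inner factors of the inner--outer decomposition $(h_i)_{|\C^+} = V_i O_i$, with $V_i = B_i S_i e^{i\alpha_i \zeta}$.

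Since $|V_i^+| = 1$ a.e., the boundary identity becomes $|O_1^+| = |O_2^+|$ a.e.\ on $(a,b)$. By relative primeness in $\mathcal{S}_0$, the non-positive measures $\log|O_i^+|\,d\lambda$ are mutually singular, and their agreement on $(a,b)$ forces both to vanish: $\log|O_i^+| = 0$ $\lambda$-a.e.\ on $(a,b)$. Substituting into the integral representation of $O_i$, the defining integral reduces near any $x_0 \in (a,b)$ to one over $\R \setminus (a,b)$, whose integrand is uniformly integrable and analytic in $\zeta$ near $x_0$; hence $O_i$ extends analytically through $(a,b)$. Consequently $V_1/V_2 = F \cdot O_2/O_1$ extends meromorphically through $(a,b)$. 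Since $B_1$ and $B_2$ have disjoint zero sets (relative primeness), zeros of $V_1$ and $V_2$ appear respectively as zeros and poles of $V_1/V_2$; any accumulation in $(a,b)$ would violate meromorphy of the extension, so $B_1$ and $B_2$ extend analytically. Relative primeness also forces one of $\alpha_1, \alpha_2$ to vanish, and $e^{i\alpha \zeta}$ is entire in any case; dividing these factors out reduces the problem to showing that meromorphic extension of $S_1/S_2$ through $(a,b)$ implies that of each $S_i$.

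This last step I expect to be the main obstacle. Set $\mu = \nu_1 - \nu_2$, where $\nu_i$ is the singular measure representing $S_i$; by relative primeness $\nu_1 \perp \nu_2$, so $|\mu| = \nu_1 + \nu_2$. For $\zeta = x + iy \in \C^+$ we have $-\log|S_1/S_2|(\zeta) = P_\mu(\zeta)$, the Poisson integral of $\mu$. If $S_1/S_2$ extends meromorphically to $W$, then $-\log|S_1/S_2|$ is harmonic on $W$ away from the isolated zeros and poles of the extension, so $P_\mu$ extends harmonically across some open subinterval of $(a,b)$. On the other hand, the classical fact that $P_{\nu_j}(x+iy) \to +\infty$ as $y \downarrow 0$ at $\nu_j$-a.e.\ $x \in \supp(\nu_j)$, combined with mutual singularity (so $P_{\nu_{3-j}}$ has a finite non-tangential limit at $\nu_j$-a.e.\ such point), shows that $P_\mu$ is unbounded in every neighborhood of any point of $(\supp(\nu_1) \cup \supp(\nu_2)) \cap (a,b)$. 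This contradicts the harmonic extension, so $\supp(\nu_i) \cap (a,b) = \emptyset$ for $i = 1, 2$, and Proposition \ref{Prop:AnaExt} yields the analytic continuation of $S_i$, and therefore of $V_i$ and $h_i = V_i O_i$, through $(a,b)$.
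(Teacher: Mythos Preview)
Your overall strategy---stripping off the outer factor, the Blaschke products, and the exponential factor, then dealing with the remaining piece---parallels the paper's, and your handling of the outer and Blaschke parts is fine. The gap is in the singular--inner step. The claim that mutual singularity of $\nu_1$ and $\nu_2$ forces $P_{\nu_{3-j}}$ to have a \emph{finite} non-tangential limit at $\nu_j$-a.e.\ points is false. Finite boundary behaviour of $P_{\nu_2}$ is governed by Lebesgue measure, not by $\nu_1$; since $\nu_1\perp\lambda$, a $\nu_1$-a.e.\ statement lives on a Lebesgue-null set where Fatou's theorem tells you nothing. A concrete counterexample: take $\nu_1=\delta_0$ and let $\nu_2$ be any singular continuous measure with $\nu_2([-r,r])\ge c\,r^{\beta}$ for some $0<\beta<1$ (e.g.\ a rescaled Cantor measure). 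Then $\nu_1\perp\nu_2$, but $P_{\nu_2}(iy)\gtrsim y^{\beta-1}\to\infty$, so at the unique $\nu_1$-a.e.\ point both Poisson integrals blow up. Your conclusion that $P_\mu$ is unbounded near support points of $|\mu|$ may still be true, but it does not follow from the reason you give.

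The clean fix---and this is exactly what the paper does, though the paper treats outer and singular parts together in one measure---is to avoid pointwise blow-up entirely and use weak-$*$ convergence. If $S_1/S_2$ extends analytically across a subinterval $I\subset(a,b)$, then $P_\mu(\cdot+iy)$ converges locally uniformly on $I$ to the (continuous) boundary values of that extension; hence the weak-$*$ limit $\mu|_I$ is absolutely continuous. But $\mu=\nu_1-\nu_2$ is a difference of singular measures and so has no absolutely continuous part, forcing $\mu|_I=0$. Then $\nu_1|_I=\nu_2|_I$ together with $\nu_1\perp\nu_2$ gives $\nu_1|_I=\nu_2|_I=0$, and Proposition~\ref{Prop:AnaExt} finishes the argument.
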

For the moment, let's assume the preceding Theorem and Lemma to be true and proceed to construct a modeling reproducing kernel space based on these results. We are now almost ready to do so. However, there is a minor issue with the representation of a function $f \in \mathcal{N}_{sym}$ as 
\begin{align*}
    f = \mathrm{i}  \frac{h_2-h_1}{h_2+h_1}.
\end{align*}
Indeed, the function $f$ remains analytic even if $h_1$ or $h_2$ has a pole, in which case $f$ takes the values $\mathrm{i}$ or $-\mathrm{i}$. The poles (and zeros as well) of $h_1$ and $h_2$ will require additional consideration. To address these exceptional points, we will use the following Lemma:
\begin{lemma}\label{Lem:IsoPoints}
Let $f \in \mathcal{N}_{sym}$ and $(\mathcal{K},A,v)$  be a minimal realization of $f$. Moreover, fix  $\zeta_0 \in \C $. If  there exists an $\epsilon >0$ such that $B_\epsilon (\zeta_0) \subset  U_{ext}(f)$  and 
$B_\epsilon (\zeta_0) \setminus \{\zeta_0 \} \subset \varrho(A)$,  then $\zeta_0 \in \varrho(A)$. 
\end{lemma}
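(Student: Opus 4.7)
I would argue by contradiction: assume $\zeta_0 \notin \varrho(A)$, so that $\zeta_0$ is isolated in $\sigma(A)$; by the symmetry of $\varrho(A)$ under complex conjugation, $\overline{\zeta_0}$ is also not in $\varrho(A)$. The resolvent $R(\zeta) := (A-\zeta)^{-1}$ is analytic on $\varrho(A)$ in the operator-norm topology, hence the defect function $\phi$ is $\mathcal{K}$-valued analytic on $\varrho(A)$, and on the punctured disk $B_\epsilon(\zeta_0) \setminus \{\zeta_0\}$ it admits a Laurent expansion $\phi(\zeta) = \sum_{n \in \mathbb{Z}} \phi_n (\zeta - \zeta_0)^n$ with $\phi_n \in \mathcal{K}$. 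The heart of the proof is to show that $\phi_n = 0$ for every $n<0$, i.e. that $\phi$ extends analytically to $\zeta_0$.

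For each $w \in \varrho(A)$, identity \eqref{For:DefectFunction} gives $[\phi(\zeta), \phi(w)]_{\mathcal{K}} = (f(\zeta) - \overline{f(w)})/(\zeta - \overline{w})$. Since $\zeta_0 \in U_{ext}(f)$ ensures analyticity of $f$ at $\zeta_0$, and $\overline{w} \neq \zeta_0$ (because $\overline{\zeta_0} \notin \varrho(A)$), this expression extends analytically in $\zeta$ through $\zeta_0$. Matching Laurent coefficients yields $[\phi_n, \phi(w)]_{\mathcal{K}} = 0$ for all $n<0$ and every $w \in \varrho(A)$. By the minimality condition \eqref{For:Intr2}, $\mathrm{span}\{\phi(w) : w \in \varrho(A)\}$ is dense in $\mathcal{K}$; combining this with continuity of the antilinear functional $[\phi_n, \cdot]_{\mathcal{K}}$ in the Hilbert topology of $\mathcal{K}$ promotes the identity to $[\phi_n, y]_{\mathcal{K}} = 0$ for every $y \in \mathcal{K}$, and non-degeneracy of the Krein form then forces $\phi_n = 0$.

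To close the argument, write the operator-valued Laurent expansion $R(\zeta) = \sum_n R_n (\zeta - \zeta_0)^n$; each $R_n \in \mathscr{L}(\mathcal{K})$, being a contour integral of $R$ on a small circle around $\zeta_0$, is bounded. For $w \in \varrho(A)$, the first identity in \eqref{For:DefectFunction} gives $R(\zeta) \phi(w) = (\phi(\zeta) - \phi(w))/(\zeta - w)$, which is now analytic at $\zeta_0$ by the previous step. Hence $R_n \phi(w) = 0$ for $n<0$, and density of $\{\phi(w)\}$ together with boundedness of $R_n$ forces $R_n = 0$ for all $n<0$. Thus $R$ extends to an analytic $\mathscr{L}(\mathcal{K})$-valued function at $\zeta_0$; passing to the limit $\zeta \to \zeta_0$ in the relation identity characterising $R(\zeta) = (A - \zeta)^{-1}$ and using closedness of $A$ shows $R_0 = (A - \zeta_0)^{-1}$ as a bounded everywhere-defined operator, so $\zeta_0 \in \varrho(A)$, contradicting the assumption.

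The main obstacle is the step showing $\phi_n = 0$ for $n<0$: the Krein inner product is indefinite, so pointwise boundedness of the scalars $[\phi(\zeta), y]_{\mathcal{K}}$ on a dense set does not immediately imply uniform boundedness of $\|\phi(\zeta)\|$ via Banach--Steinhaus, which would be the direct route in a Hilbert space. The proof bypasses this by exploiting the explicit algebraic form of $[\phi(\zeta), \phi(w)]_{\mathcal{K}}$ together with density and non-degeneracy, eliminating the singular Laurent coefficients of $\phi$ directly rather than via a norm estimate.
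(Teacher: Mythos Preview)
Your argument is correct. The paper does not supply its own proof of this lemma: it simply observes that the result is well known in the Pontryagin setting and cites \cite{edselc.2-52.0-8498542087719930101} and \cite[Section 6]{me2}, remarking that the extra hypothesis used in the latter reference plays no role. Your approach---eliminating the negative Laurent coefficients of the defect function $\phi$ via the explicit kernel identity $[\phi(\zeta),\phi(w)]_{\mathcal K}=N_f(\zeta,w)$, minimality, and non-degeneracy of the Krein form, and then bootstrapping to the resolvent---is exactly the mechanism those references rely on, so there is nothing materially different to compare. Your closing step is slightly terse but sound: once $R(\zeta)$ extends analytically to $\zeta_0$, closedness of $A$ gives $\mathrm{graph}(R_0)\subset (A-\zeta_0)^{-1}$, and any multivalued part $(0,y)\in (A-\zeta_0)^{-1}$ forces $(\zeta_0-w)R(w)y=y$ for $w$ near $\zeta_0$, hence $y=0$.
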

A proof can be found in  \cite[Section 6]{me2}. For the sake of completeness, we mention that the proof in \cite[Section 6]{me2} was formulated with the additional assumption that  $f$ has real boundary values. However, this assumption on $f$  was not used in the proof.
\begin{proof}[Proof of Theorem \ref{Thrm:Main1}]
Let $f\in \mathcal{N}_{sym}$. If $f$ is constant on $\C^+$, then either $f$ or $-f$ is a Herglotz-Nevanlinna function and Theorem \ref{Thrm:Main1} is clearly true. Consequently, we can assume without loss of generality that this is not the case, which allows us to find  $h_1$ and $h_2$ as described in Proposition \ref{Prop:MainTech}.
Consider the reproducing kernel Hilbert spaces 
\begin{align*}
    \mathcal{K}_+ \vcentcolon =\frac{\sqrt{2}}{h_1+h_2} \cdot \mathcal{B}(h_1) \subset \mathcal{N}(\C\setminus \R), \quad 
    \mathcal{K}_- \vcentcolon = \frac{\sqrt{2}}{h_1+h_2} \cdot \mathcal{B}(h_2) \subset \mathcal{N}(\C\setminus \R),
\end{align*}
We want to take the sum of these spaces, for which we first specify a suitable common domain. It is clear that both $\mathcal{K}_+$ and $\mathcal{K}_-$ consists of functions which are at least analytic on the set
\begin{align*}
    \Omega(f)\vcentcolon 
    = \{ \zeta \in \C \vcentcolon  h_1 \text{ and } h_2 \text{ are analytic at $\zeta$, } 
    h_1(\zeta)+h_2(\zeta)\neq 0 \text{ and }
    h_1(\zeta)  h_2(\zeta) \neq 0 \}.
\end{align*}
The zeros of $h_1$ and $h_2$ were excluded as well in order to ensure that $D_w$ acts boundedly on $\mathcal{B}(h_i)$ for every $w \in \Omega(f)$ (see Theorem \ref{Thrm:Basic1}). Note that $f$ is given by  
\begin{align*}
    f = \mathrm{i}  \frac{h_2-h_1}{h_2+h_1},
\end{align*}
which means that $\Omega(f)\subset U_{ext}(f)$.
Henceforth, we consider $\mathcal{K}_+$ and $\mathcal{K}_-$ as reproducing kernel spaces consisting of functions defined on $\Omega(f)$ by restricting the domains accordingly. We have seen in Proposition \ref{Prop:MainTech}, that  the intersection of $\mathcal{K}_+$ and $\mathcal{K}_-$  is trivial. Therefore, their   direct sum $\mathcal{L}(f)\vcentcolon = \mathcal{K}+ \oplus\mathcal{K}-$ together with the inner product 
\begin{align*}
   [f_++f_-,g_++g_-]_{\mathcal{L}(f)}\vcentcolon =
   [f_+,g_+]_{\mathcal{K}_+}-[f_-,g_-]_{\mathcal{K}_-} \quad \forall f_+,g_+ \in \mathcal{K}_+, \quad  f_-,g_- \in \mathcal{K}_-.
\end{align*}
is a reproducing kernel Krein space. Its kernel is $N_f(\zeta,w)$, because
\begin{align*}
\frac{\sqrt{2}}{h_1(\zeta)+h_2(\zeta)}& (s_{h_1}(\zeta,w)-s_{h_2}(\zeta,w) )  \frac{\sqrt{2}}{\overline{h_1(w)}+\overline{h_2(w)}} \\
&=\frac{\sqrt{2}}{h_1(\zeta)+h_2(\zeta)} \left(\frac{1-h_1(\zeta) \overline{h_1(w)}}{-\mathrm{i}  (\zeta-\overline w)}-\frac{1-h_2(\zeta) \overline{h_2(w)}}{-\mathrm{i} (\zeta-\overline w)} \right)  \frac{\sqrt{2}}{\overline{h_1(w)}+\overline{h_2(w)}}\\
&=\frac{\mathrm{i}}{\zeta-\overline w}  \left( \frac{2  (h_2(\zeta) \overline{h_2(w)}-h_1(\zeta) \overline{h_1(w)} ) }{(h_1(\zeta)+h_2(\zeta)) (\overline{h_1(w)}+\overline{h_2(w)})} \right) \\
&
=\frac{\mathrm{i}}{\zeta-\overline w}  \left(
\frac{h_2(\zeta)-h_1(\zeta)}{h_1(\zeta)+h_2(\zeta)}
+
\frac{\overline{h_2(w)}-\overline{h_1(w)}}{\overline{h_1(w)}+\overline{h_2(w)}} \right)
=\frac{f(\zeta)- \overline{f(w)}}{\zeta-\overline w}
=N_f(\zeta,w).           
\end{align*}
Next, we show that the difference quotient operator is well-defined and  bounded on this space. To this end, fix  $w \in \Omega(f)$ and pick some arbitrary element $g\in \mathcal{B}(h_1)$. This corresponds to an element 
\begin{align*}
    \frac{1}{h_1+h_2} \cdot g \in \mathcal{K}_+.
\end{align*}
We calculate using the multiplication formula for the difference quotient
\begin{align*}
    D_w\left(\frac{1}{h_1+h_2} \cdot g\right)
    =\frac{1}{h_1+h_2} \cdot  D_w(g)+g(w) \cdot D_w\left(\frac{1}{h_1+h_2}\right).
\end{align*}
Since $D_w$ is a bounded operator on $\mathcal{B}(h_1)$ we conclude that $\frac{1}{h_1+h_2} \cdot D_w(g) \in \mathcal{L}(f)$. Moreover, it  is easy to see that 
\begin{align}\label{For:Decomposition}
    D_w\left(\frac{1}{h_1+h_2}\right)=-\frac{1}{h_1+h_2} \cdot (D_w(h_1)+D_w(h_2))\cdot \frac{1}{h_1(w)+h_2(w)}
\end{align}
and we conclude that $D_w\left(\frac{1}{h_1+h_2}\right) \in \mathcal{L}(f)$ by   Theorem \ref{Thrm:BhLq}. 
The same argument shows that also $D_w(g) \in \mathcal{L}(f)$ for $g \in \mathcal{K}_-$. Consequently, the operator $D_w$ is everywhere defined on $\mathcal{L}(f)$ and is closed since point evaluations are continuous. This means that $D_w$ is even continuous by the closed graph theorem. 
\par\smallskip
In summary, we have constructed a modeling reproducing kernel space on $\Omega(f)$,
or equivalently a  minimal realization $(\mathcal{L}(f),A,N_f(\cdot, \overline{\zeta_0}))$ with $\Omega(f) \subset \varrho(A)$ and $\zeta_0 \in \Omega(f)$.  
Now let $\zeta \in U_{ext}(f)$. Then the following holds: 
\begin{itemize}
    \item If $\zeta \in \C \setminus \R$ such that $\zeta \notin \Omega(f)$, then either $h_1$ or $h_2$ has a zero or pole at $\zeta$.
    \item If $\zeta \in \R$, then $\zeta \in \Omega(f)$. Indeed, consider an interval $\zeta \in (a,b)$ on which $f$ is analytic. Then $f$ is real valued there. Consequently, it follows that $C(f)$ given by
    \begin{align*}
         C(f)=\frac{1+\mathrm{i}  f}{1-\mathrm{i} f} =\frac{h_1}{h_2}
    \end{align*}
    is analytic on $(a,b)$ and does not take the values $-1$ nor $0$ there. This implies that $h_1$ and $h_2$ are analytic on $(a,b)$ (Theorem \ref{basic1.5})  and that  $h_i \neq 0$ and $h_1+h_2 \neq 0$ on $(a,b)$.
\end{itemize}
Thus, it follows that $\Omega(f)$ falls short of $U_{ext}(f)$ only by a set of isolated points, namely the poles and zeros of $h_1$ and $h_2$. We apply Lemma $\ref{Lem:IsoPoints}$ to each of these points to conclude that $U_{ext}(f)=\varrho(A)$. This means that every function in $\mathcal{L}(f)$ can be analytically extended to $U_{ext}(f)$ and that $D_w$ is well defined and bounded on $U_{ext}(f)$. This concludes the proof. 
\end{proof}
The existence of the analytic continuation onto the exceptional points might seem mysterious  at first glance since it followed from an abstract argument. However, this can also be proven directly in the following way:
\begin{remark}
    Let $q_1$ be the Cayley transform of $h_1$, which is a Herglotz-Nevanlinna function. Then it follows that
    \begin{align*}
        \mathcal{K}_+= \frac{1}{h_1+h_2} \cdot \mathcal{B}(h_1) = \frac{h_1+1}{h_1+h_2} \cdot \mathcal{L}(q_1).
    \end{align*}
    From this equality it  can be read off that  any function $f \in \mathcal{K}_+$ has an analytic continuation at $\zeta$ even if $\zeta$ is a pole of $h_1$. Indeed the  function 
    \begin{align*}
        \frac{h_1+1}{h_1+h_2}
    \end{align*}
    is analytically continuable at such a pole  and $\mathcal{L}(q_1)$ consists of functions which are analytic on  $\C \setminus \R$. 
\end{remark}
\subsection{The key argument revisited }\label{GenBhspaces}
The argument made in the proof of Theorem \ref{Thrm:Main1} is a special case of the following general principle: 
\begin{theorem}\label{Thrm:GenPrin}
Let $\mathcal{K}$ be a reproducing kernel Krein space on a domain $\Omega$, and assume that $D_w$ is well defined and bounded for every $w\in \Omega$. Furthermore, let $f\in\mathcal{O}(\Omega)$ such that
\begin{itemize}
\item[(i)] the function $D_w(f)$ is an element of  $\mathcal{K}$ 
\item[(ii)] the function  $\frac{1}{f}$ is  meromorphic on $\Omega$.
\end{itemize}
In this case,  the difference-quotient operator $D_w$ is well-defined and bounded on the reproducing kernel Krein space $\frac{1}{f} \cdot \mathcal{K}$ for every $w\in \Omega \setminus \{ w \in \Omega \vcentcolon f(w)=0\}$. Note that we do not assume that $f \in \mathcal{K}$. 
\end{theorem}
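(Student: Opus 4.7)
The plan is to mirror the key computation from the proof of Theorem \ref{Thrm:Main1}, where the crucial step was exactly of this form with $f = h_1+h_2$. Three ingredients are needed: the conjugation principle for reproducing kernel Krein spaces (Proposition \ref{Prop:Conj}), Leibniz's rule for the difference-quotient operator (Definition \ref{Def:diffquot}), and the closed graph theorem.

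First I would set $Z \vcentcolon = \{\zeta \in \Omega \vcentcolon f(\zeta)=0\}$ and $\Omega' \vcentcolon = \Omega \setminus Z$. Since $1/f$ is meromorphic on $\Omega$ with poles precisely in $Z$, it is analytic and nowhere vanishing on $\Omega'$, so Proposition \ref{Prop:Conj} identifies $\tilde{\mathcal{K}} \vcentcolon = \frac{1}{f}\cdot \mathcal{K}$, viewed as a space of analytic functions on $\Omega'$, as a reproducing kernel Krein space. Then I would fix $w \in \Omega'$ and, for arbitrary $g \in \mathcal{K}$, apply Leibniz's identity to obtain
\[
D_w\!\left(\frac{1}{f}\cdot g\right) \;=\; \frac{1}{f}\cdot D_w(g) \;+\; g(w)\cdot D_w\!\left(\frac{1}{f}\right).
\]
The first summand lies in $\tilde{\mathcal{K}}$ because $D_w$ is assumed bounded on $\mathcal{K}$, so $D_w(g) \in \mathcal{K}$. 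For the second summand, a direct calculation gives
\[
D_w\!\left(\frac{1}{f}\right) \;=\; -\frac{1}{f(w)}\cdot \frac{1}{f}\cdot D_w(f),
\]
and this belongs to $\tilde{\mathcal{K}}$ because $D_w(f)\in \mathcal{K}$ by hypothesis (i) and $f(w)\neq 0$. Thus $D_w$ is everywhere defined on $\tilde{\mathcal{K}}$ and maps it into itself.

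To finish, I would invoke the closed graph theorem. Since point evaluations at each $\zeta \in \Omega'$ are bounded linear functionals on the reproducing kernel Krein space $\tilde{\mathcal{K}}$, and since $D_w(h)(\zeta)$ depends continuously on $h(\zeta)$ and $h(w)$, the operator $D_w$ is closed; boundedness is then immediate.

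I do not expect a serious obstacle, because the whole argument hinges on the identity $D_w(1/f) = -D_w(f)/(f\cdot f(w))$, which converts hypothesis (i) directly into the required membership $D_w(1/f) \in \tilde{\mathcal{K}}$ --- and crucially this works even when $f \notin \mathcal{K}$, as explicitly allowed in the statement. The only subtlety is domain bookkeeping: one must restrict to $\Omega'$ both so that $1/f$ is a genuine analytic conjugator and so that $D_w$ is meaningful at $w$, but this is precisely the restriction built into the theorem.
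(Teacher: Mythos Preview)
Your proposal is correct and follows essentially the same route as the paper's proof: apply the multiplication identity to $D_w\!\left(\frac{1}{f}\cdot g\right)$, observe that $\frac{1}{f}\cdot D_w(g)\in\frac{1}{f}\cdot\mathcal{K}$ by the boundedness of $D_w$ on $\mathcal{K}$, and reduce the problem to checking $D_w\!\left(\frac{1}{f}\right)\in\frac{1}{f}\cdot\mathcal{K}$, which follows from the identity $f\cdot D_w\!\left(\frac{1}{f}\right)=-\frac{1}{f(w)}D_w(f)$ together with hypothesis~(i). The only cosmetic difference is that the paper reads off boundedness directly from the decomposition (both summands depend continuously on $g$ via the boundedness of $D_w$ on $\mathcal{K}$ and the continuity of the point evaluation $g\mapsto g(w)$), so your appeal to the closed graph theorem, while valid, is not actually needed.
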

\begin{proof}
    Let $\frac{1}{f} \cdot g \in \frac{1}{f} \cdot \mathcal{K}$ and calculate
    \begin{align*}
        D_w\left( \frac{1}{f} \cdot g \right)= \frac{1}{f} \cdot D_w(g)+g(w) \cdot  D_w\left( \frac{1}{f}\right).
    \end{align*}
    Since point evaluations are continuous and $\frac{1}{f} \cdot D_w(g)\in \frac{1}{f} \cdot \mathcal{K}$ by assumption, we conclude that $D_w$ is well defined and bounded on $\frac{1}{f} \cdot \mathcal{K}$ if and only if 
    \begin{align*}
        D_w\left( \frac{1}{f}\right)\in \frac{1}{f} \cdot \mathcal{K}, \text{ which is equivalent to } f \cdot D_w\left( \frac{1}{f}\right) = \frac{1}{f(w)} D_w(f)\in \mathcal{K}. 
    \end{align*}
     The latter statement is true by assumption. 
\end{proof}
This principle allows us to define realizations of Möbius transformations. More precisely, let $q$ be of bounded type and $\mathcal{L}(f)$ be a modeling reproducing kernel space. Then $\frac{1}{f} \cdot \mathcal{L}(f)$ is a  reproducing kernel Krein space with kernel
\begin{align*}
   \frac{1}{f(\zeta)} \cdot N_f(\zeta,w) \cdot \frac{1}{\overline{f(w)}}= N_{\frac{1}{-f}}(\zeta,w).
\end{align*}
Moreover, $D_w$ is bounded on $\frac{1}{f} \cdot \mathcal{L}(f)$ since $D_w(f)=N_f(\cdot,\overline{w}) \in \mathcal{L}(f)$, which means that this is a modeling reproducing kernel space for $\frac{1}{-f}$  This was known before \cite{me}, however this reasoning significantly simplifies the proof given there.
\par\smallskip
In the following, we describe yet another application of this principle. So far we were interested in reproducing kernel spaces associated to the Nevanlinna kernel generated by functions in $  \mathcal{N}_{sym}$. %We emphasize that the symmetry condition $f(\overline{\zeta})=\overline{f(\zeta)}$ is intrinsically linked to the kernel $N_{f}$. 
In the context of inverse scattering theory  \cite{Alpay1},  one often considers reproducing kernel  spaces corresponding to the kernel 
\begin{align*}
    s_v(\zeta, w)=\frac{1-v(\zeta)\overline{v(w)}}{- \mathrm{i}  (\zeta - \overline{w})}.
\end{align*}
In this  case, the natural symmetry condition for the function $v \in \mathcal{N}(\C \setminus \R)$ is 
\begin{align*}
        v_{|\C^-}=\frac{1}{SR(v_{|\C^+})}.
\end{align*}
Of course, if $v_{|\C^+}$ is analytic and bounded by one, then $v \in \mathcal{S}_0$ and we end up with a $\mathcal{B}(v)$ space. However, the methods developed in this article allow us to define such spaces for arbitrary symbols of bounded type,  and furthermore  show that they are invariant under the difference-quotient operator:
\begin{theorem}
    Let $v\in \mathcal{N}(\C \setminus \R)$ and  assume that $v$ satisfies the symmetry condition
    \begin{align*}
        v_{|\C^-}=\frac{1}{SR(v_{|\C^+})}.
    \end{align*}
    Then there are functions $h_1$, $h_2$ in $\mathcal{S}_0$ such that $v=\frac{h_1}{h_2}$ and $\mathcal{B}(h_1)\cap \mathcal{B}(h_2)=\{0\}$. Moreover, the space 
    \begin{align*}
        \mathcal{B}(v)\vcentcolon= \frac{1}{h_2} \cdot 
    ( \mathcal{B}(h_1) \oplus  -\mathcal{B}(h_2))
    \end{align*}
    is a reproducing kernel Krein space with  
    \begin{align*}
    \text{domain} \quad 
    &U_0(v) \vcentcolon = \{\zeta \in \C \vcentcolon  h_1 \text{ and } h_2 \text{ are analytic at } \zeta \text{ and } h_1(\zeta)h_2(\zeta)\neq 0 \} \\
    \text{and kernel} \quad 
     &s_v(\zeta, w) \vcentcolon U_0(v) \times U_0(v)  \rightarrow \C, \quad   s_v(\zeta, w)=\frac{1-v(\zeta)\overline{v(w)}}{- \mathrm{i}  (\zeta - \overline{w})}.
    \end{align*}
    Finally, $D_w$ is well defined and bounded for every $w\in U_0(v)$.
\end{theorem}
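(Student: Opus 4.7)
The plan is to mirror the construction underlying Theorem \ref{Thrm:Main1}, replacing the Cayley transform by the direct inner–outer decomposition of $v_{|\C^+}$. Since $v_{|\C^+}\in\mathcal{N}(\C^+)$, I write $v_{|\C^+}=\tfrac{V_1}{V_2}\cdot F$ with $V_1,V_2$ relatively prime inner and $F$ outer, as in the proof of Proposition \ref{Prop:MainTech}. Decomposing $\log|F|$ into its positive part $m_+$ and negative part $m_-$, which live on mutually singular sets, I let $O_1,O_2$ be the outer functions with $\log|O_1|=m_-\leq 0$ and $\log|O_2|=-m_+\leq 0$. Then the products $V_1O_1$ and $V_2O_2$ lie in the closed unit ball of $\mathcal{H}^\infty(\C^+)$ and satisfy $v_{|\C^+}=(V_1O_1)/(V_2O_2)$. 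Letting $h_i\in\mathcal{S}_0$ be the function generated by $V_iO_i$, the prescribed symmetry $v_{|\C^-}=1/SR(v_{|\C^+})$ is precisely the symmetry produced by taking a quotient of two elements of $\mathcal{S}_0$, so $v=h_1/h_2$ holds on all of $\C\setminus\R$. By construction the inner factors are relatively prime and the measures $m_-\,\mathrm d\lambda,-m_+\,\mathrm d\lambda$ are mutually singular, hence $h_1$ and $h_2$ are relatively prime, and Theorem \ref{basic1.5} yields $\mathcal{B}(h_1)\cap\mathcal{B}(h_2)=\{0\}$.

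Next, restricting all functions to $U_0(v)$, on which both $h_1$ and $h_2$ are analytic and non-vanishing, the summation proposition of Section 2.3 makes $\mathcal{B}(h_1)\oplus -\mathcal{B}(h_2)$ a reproducing kernel Krein space with kernel $s_{h_1}(\zeta,w)-s_{h_2}(\zeta,w)$. Since $1/h_2$ is analytic and non-vanishing on $U_0(v)$, Proposition \ref{Prop:Conj} applies and endows $\mathcal{B}(v)=\tfrac{1}{h_2}(\mathcal{B}(h_1)\oplus -\mathcal{B}(h_2))$ with the structure of a reproducing kernel Krein space whose kernel is the conjugated kernel
\[
\frac{1}{h_2(\zeta)}\cdot\frac{h_2(\zeta)\overline{h_2(w)}-h_1(\zeta)\overline{h_1(w)}}{-i(\zeta-\overline{w})}\cdot\frac{1}{\overline{h_2(w)}}=\frac{1-v(\zeta)\overline{v(w)}}{-i(\zeta-\overline{w})}=s_v(\zeta,w),
\]
as required.

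Finally, boundedness of $D_w$ follows by an immediate application of the general principle in Theorem \ref{Thrm:GenPrin}, with $\mathcal{K}=\mathcal{B}(h_1)\oplus -\mathcal{B}(h_2)$, $\Omega=U_0(v)$ and $f=h_2$. Indeed, on $U_0(v)$ both $h_i$ are analytic with $h_i(w)\neq 0$, so Theorem \ref{Thrm:Basic1} gives boundedness of $D_w$ on each summand and hence on $\mathcal{K}$; Theorem \ref{Thrm:BhLq} provides $D_w(h_2)\in\mathcal{B}(h_2)\subset\mathcal{K}$; and $1/h_2$ is analytic on $\Omega$. All hypotheses of Theorem \ref{Thrm:GenPrin} being satisfied, $D_w$ is well defined and bounded on $\mathcal{B}(v)$ for every $w\in U_0(v)$.

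The main technical obstacle lies in the first step, namely verifying that the inner–outer decomposition of $v_{|\C^+}$ can be lifted consistently, under the symmetry $v_{|\C^-}=1/SR(v_{|\C^+})$, to a global factorization $v=h_1/h_2$ with $h_1,h_2\in\mathcal{S}_0$ relatively prime; once this bookkeeping is in place, the remaining two steps reduce to mechanical checks with Proposition \ref{Prop:Conj} and Theorem \ref{Thrm:GenPrin}.
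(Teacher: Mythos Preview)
Your proposal is correct and follows essentially the same approach as the paper. The paper's proof is terser: it dismisses the existence of $h_1,h_2$ with ``it is clear that we can find $h_1$ and $h_2$ as described above'' (because this is literally the construction from Proposition~\ref{Prop:MainTech} with $C(f)$ replaced by $v$, which satisfies the same symmetry), then does the same kernel computation and invokes Theorem~\ref{Thrm:GenPrin} with $D_w(h_2)\in\mathcal{B}(h_2)$ from equation~\eqref{For:WichtigesElement}. Your closing remark that the first step is the ``main technical obstacle'' is therefore slightly overstated: this step is a verbatim repetition of the argument already carried out in Proposition~\ref{Prop:MainTech}, and the paper treats it accordingly.
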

\begin{proof}
It is clear that we can find $h_1$ and $h_2$ as described above. Then the space $\mathcal{B}(h_1) \oplus  -\mathcal{B}(h_2)$ is a reproducing kernel Krein space consisting of functions which are analytic at least on $U_0$. The kernel of $\mathcal{B}(v)$ is 
$s_v(\cdot, w)$, because
\begin{align*}
    s_v(\cdot, w)=\frac{1}{h_2(\cdot)}
     ( s_{h_1}(\cdot,w) -s_{h_2}(\cdot,w))  \frac{1}{\overline{h_2(w)}}.
\end{align*}
Finally, the $D_w$ is well defined and bounded on this space, because $D_w$ is well defined and bounded on $\mathcal{B}(h_1) \oplus  -\mathcal{B}(h_2)$  and  $D_w(h_2) \in \mathcal{B}(h_1) \oplus  \mathcal{B}(h_2)$, see equation \eqref{For:WichtigesElement}. Note that we have used  Theorem \ref{Thrm:GenPrin} here. 
\end{proof}
If $v_{|\C^+}$ has unimodular boundary values, then 
the space $\mathcal{B}(v)$ was considered  in \cite[Section 6]{Alpay1} for the first time. However,  they did not  show that the difference quotient operator is bounded.

\section{The proof of Theorem \ref{basic1.5} and Lemma \ref{BoundaryBehaviour}} \label{Sec:Intersection}
\subsection{Intersections of $\mathcal{B}(h)$-spaces}
In this section, we give the missing proofs for Theorem \ref{basic1.5} and Lemma \ref{BoundaryBehaviour}. We begin with the intersection result, which is restated for convenience here:
\begin{proposition}
Let $h_1$ and $h_2$ be elements of $\mathcal{S}_0$. Then $h_1$ and $h_2$ are relatively prime if and only if
\begin{align*}
    \mathcal{B}(h_1)\cap\mathcal{B}(h_2)=\{0\}.
\end{align*}
\end{proposition}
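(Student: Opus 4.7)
For the easier direction $\mathcal{B}(h_1)\cap\mathcal{B}(h_2)=\{0\}\Rightarrow h_1,h_2$ relatively prime, I would argue by contrapositive: given a non-trivial common divisor $h_3\in\mathcal{S}_0$ with $h_i=h_3h_i'$, the straightforward computation
\begin{align*}
    s_{h_i}(z,w) \;=\; s_{h_3}(z,w)\;+\; h_3(z)\overline{h_3(w)}\, s_{h_i'}(z,w)
\end{align*}
combined with Aronszajn's sum theorem for reproducing kernels (applied on the common domain $U_{ext}(h_i)=U_{ext}(h_3)\cap U_{ext}(h_i')$) gives contractive inclusions $\mathcal{B}(h_3)\subset\mathcal{B}(h_i)$, hence $\mathcal{B}(h_3)\subset\mathcal{B}(h_1)\cap\mathcal{B}(h_2)$. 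This is a non-trivial subspace since $h_3|_{\C^+}$ is not a unimodular constant, and essentially appears as De Branges' Theorem~6 in \cite{DeBranges}.

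For the other direction $h_1,h_2\text{ relatively prime}\Rightarrow\mathcal{B}(h_1)\cap\mathcal{B}(h_2)=\{0\}$, fix $f\in\mathcal{B}(h_1)\cap\mathcal{B}(h_2)$. Using Theorem~\ref{Thrm:BhLq} I would write $f=\frac{1+h_i}{\sqrt 2}\,g_i$ with $g_i\in\mathcal{L}(q_i)\subset\mathcal{N}^+(\C\setminus\R)$, where $q_i=C^{-1}(h_i)$ is Herglotz-Nevanlinna. Comparing kernels on $\C^+\times\C^+$ identifies $\mathcal{B}(h_i)|_{\C^+}$ with the de Branges-Rovnak space $\mathcal{H}(h_i|_{\C^+})\subset\mathcal{H}^2(\C^+)$, so $f|_{\C^+}\in\mathcal{H}^2(\C^+)$. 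The overall plan is to show in addition that $f|_{\C^+}$ and $f|_{\C^-}$ are pseudocontinuations of each other and that $f|_{\C^-}\in\mathcal{N}^+(\C^-)$; Proposition~\ref{Prop:RegRes} then forces $f\equiv 0$.

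The pseudocontinuation is handled by Proposition~\ref{Prop:BoundaryBehaviour}: $f\in\mathcal{B}(h_i)$ has matching non-tangential boundary limits a.e.\ on $P_{h_i}$, and with the inner-outer factorization $h_i|_{\C^+}=V_i O_i$ one has $|h_i|=|O_i|$ on $\R$, so $\R\setminus P_{h_i}$ coincides (mod null sets) with the support of $\log|O_i|$. The mutual singularity of $\log|O_1|$ and $\log|O_2|$, which is part of relative primality, makes these supports disjoint and gives $P_{h_1}\cup P_{h_2}=\R$ almost everywhere; hence the boundary values of $f$ match a.e. For the $\mathcal{N}^+(\C^-)$ property, set $\tilde h_i\vcentcolon=SR(h_i|_{\C^+})\in\mathcal{H}^\infty(\C^-)$. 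The identity $f=\frac{1+h_i}{\sqrt 2}g_i$ rewrites on $\C^-$ as $f\cdot\tilde h_i=\frac{(1+\tilde h_i)g_i}{\sqrt 2}\in\mathcal{N}^+(\C^-)$ for $i=1,2$. Writing $\tilde h_i=\tilde V_i\tilde O_i$ as the inner-outer factorization on $\C^-$ and $f\tilde h_i=\alpha_i/\beta_i$ with $\alpha_i\in\mathcal{H}^\infty(\C^-)$ and $\beta_i$ outer, the cross-identity $\alpha_1\tilde h_2\beta_2=\alpha_2\tilde h_1\beta_1$ together with uniqueness of inner-outer factorization and relative primality of the inner parts $\tilde V_1,\tilde V_2$ forces $\tilde V_i\mid\alpha_i$ in $\mathcal{H}^\infty(\C^-)$. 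Then $f|_{\C^-}=(\alpha_i/\tilde V_i)/(\tilde O_i\beta_i)$ exhibits $f|_{\C^-}$ as an $\mathcal{H}^\infty$ function over an outer denominator, so $f|_{\C^-}\in\mathcal{N}^+(\C^-)$, and Proposition~\ref{Prop:RegRes} closes the argument. I expect the main obstacle to be precisely this last step: both halves of relative primality (inner parts for the $\mathcal{N}^+$ argument, outer parts for the pseudocontinuation) have to be used, and the inner-outer bookkeeping on $\C^-$ through the Schwartz reflection requires care.
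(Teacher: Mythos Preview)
Your argument is correct and shares the paper's global strategy—show $f|_{\C^+}\in\mathcal{H}^2(\C^+)$, establish pseudocontinuation, show $f|_{\C^-}\in\mathcal{N}^+(\C^-)$, then invoke Proposition~\ref{Prop:RegRes}—but the execution is organised differently. The paper first isolates the purely inner and purely outer cases (Proposition~\ref{Schur:Basic2} and Lemma~\ref{Lem:IntOuter}) and then, for general $h_i$, uses the kernel identity $s_{h_i}=s_{o_i}+o_i\,s_{v_i}\,\overline{o_i}$ to write $\mathcal{B}(h_i)=\mathcal{B}(o_i)+o_i\cdot\mathcal{B}(v_i)$ and perform a case split on whether $g|_{\C^-}\in\mathcal{N}^+(\C^-)$; in the bad case Lemma~\ref{Lem:Div} produces a common inner divisor. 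You avoid this decomposition entirely. Your key shortcut is the observation that the restriction of $\mathcal{B}(h_i)$ to $\C^+$ is, as a set, the de~Branges--Rovnyak space $\mathcal{H}(h_i|_{\C^+})\subset\mathcal{H}^2(\C^+)$ by the general RKHS restriction theorem; this gives $f|_{\C^+}\in\mathcal{H}^2$ directly, without the extremality argument the paper uses in the outer case (Proposition~\ref{Prop:Rovnak} is about \emph{isometric} isomorphism, which does need extremality, but set-level inclusion does not). Your inner--outer bookkeeping on $\C^-$ to force $\tilde V_i\mid\alpha_i$ is essentially Lemma~\ref{Lem:Div} unwound. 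The upshot: your route is more uniform and bypasses the inner/outer case analysis, while the paper's route has the pedagogical benefit of making explicit which classical model-space facts drive each subcase.
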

\par\smallskip
First, we treat the case in which both $h_1$ and $h_2$ are generated by an inner function. If that is the case, then we
have seen in Proposition \ref{Prop:Rovnak} that $\mathcal{B}(h)$ is isomorphic to the model space associated to $h_{|\C^+}$. More precisely, the following holds:
\begin{proposition}\label{Schur:Basic2}
Let $h,h_2\in \mathcal{S}_0$ be generated by  inner functions $V$ and $V_2$ respectively.
Then the following holds:
\begin{itemize}
    \item[(i)] The restriction operator $\mathrm{R} \vcentcolon  \mathcal{B}(h) \rightarrow \mathcal{H}(V) \subset \mathcal{H}^2(\C^+)$ is an isometric isomorphism. Moreover, for every
    $g \in \mathcal{B}(h)$  the functions $g_{|\C^+}$ and  $g_{|\C^-} $ are pseudocontinuations of each other.
    \item[(ii)] Assume that $V$ and $V_2$ are relatively prime. Then it holds that $ \mathcal{H}(V) \cap  \mathcal{H}(V_2)=\{0\} $ and consequently also 
    \begin{align*}
        \mathcal{B}(h
)\cap \mathcal{B}(h_2)=\{0\}.
    \end{align*}
    \item[(iii)] Let $g \in \mathcal{B}(h) \setminus \{0\}$. Then there exists a function  $F \in  \mathcal{H}^2(\C^-)$ such that
    \begin{align*}
    g_{|\C^-} = \frac{F}{SR((V)_{|\C^+})}, \quad F \in  \mathcal{H}^2(\C^-).
    \end{align*}
    Moreover,  $g_{|\C^-}$ is not an element of $\mathcal{N}^+(\C^-)$.
\end{itemize}
\end{proposition}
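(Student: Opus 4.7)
The plan is to dispatch the three items sequentially, with parts (ii) and (iii) building directly on the representation-theoretic content of part (i).

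For (i), I would first check that the extremality hypothesis of Proposition \ref{Prop:Rovnak} is automatic: since $V$ is inner, $|V(x)|=1$ holds $\lambda$-a.e. on $\R$, so $\log(1-|h(x)|)=-\infty$ on a set of positive measure and the defining integral diverges to $-\infty$. This immediately yields the isometric isomorphism $\mathrm{R}\vcentcolon \mathcal{B}(h)\to\mathcal{H}(V)$. For the pseudocontinuation claim, I would invoke Proposition \ref{Prop:BoundaryBehaviour}. Because $|V(x)|=1$ a.e., the relation $h_{|\C^-}=1/SR(h_{|\C^+})$ produces boundary values $1/\overline{V(x)}=V(x)$ from below that coincide with the boundary values $V(x)$ from above, so $P_h=\R$ up to a null set. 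Proposition \ref{Prop:BoundaryBehaviour} then forces every $g\in\mathcal{B}(h)$ to have matching one-sided limits $\lambda$-a.e., which is exactly the assertion that $g_{|\C^+}$ and $g_{|\C^-}$ are pseudocontinuations of each other.

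For (ii), I would quote the classical model-space fact that $\mathcal{H}(V)\cap\mathcal{H}(V_2)=\{0\}$ whenever the inner functions $V$ and $V_2$ are relatively prime; this follows from the Beurling-type identification $\mathcal{H}(V)=\mathcal{H}^2(\C^+)\ominus V\mathcal{H}^2(\C^+)$ together with the lattice structure of inner divisibility (any $f$ in the intersection is orthogonal to $V\mathcal{H}^2$ and to $V_2\mathcal{H}^2$, hence to the greatest common multiple, which is all of $\mathcal{H}^2$ when the GCD is trivial). The conclusion $\mathcal{B}(h)\cap\mathcal{B}(h_2)=\{0\}$ then follows from (i): the restriction of any element of the intersection lies in $\mathcal{H}(V)\cap\mathcal{H}(V_2)=\{0\}$, and the restriction map from (i) is injective.

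For (iii), I would use the pseudocontinuation characterization of model spaces: $f\in\mathcal{H}(V)$ if and only if there exists $F\in\mathcal{H}^2(\C^-)$ with $f(x)=V(x)\cdot F(x)$ $\lambda$-a.e.\ on $\R$. Translating this via (i) and using that the pseudocontinuation of $V$ is $1/SR(V_{|\C^+})$, I obtain precisely $g_{|\C^-}=F/SR(V_{|\C^+})$ with $F\in\mathcal{H}^2(\C^-)$. For the last assertion I argue by contradiction: if $g_{|\C^-}\in\mathcal{N}^+(\C^-)$, then since $g_{|\C^-}$ and $g_{|\C^+}\in\mathcal{H}^2(\C^+)$ share the same boundary function $\lambda$-a.e.\ by (i), Proposition \ref{Prop:RegRes} applies to the pseudocontinuation pair $(g_{|\C^+},g_{|\C^-})$ and forces $g\equiv 0$, contradicting $g\ne 0$.

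The main obstacle is really conceptual rather than technical: it lies in reconciling the two-sided space $\mathcal{B}(h)$ with the one-sided model space $\mathcal{H}(V)$. Everything hinges on the interaction of the Schwartz-reflection relation $h_{|\C^-}=1/SR(h_{|\C^+})$ with the unimodularity $|V(x)|=1$ a.e., which makes the boundary values match from both sides and activates Proposition \ref{Prop:BoundaryBehaviour}. Once this is in place, the remaining steps reduce to classical $\mathcal{H}^2$-theory applied via the isomorphism of (i).
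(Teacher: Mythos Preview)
Your proposal is correct, and it is more explicit than the paper's own proof, which simply cites external references (Garcia--Mashreghi--Ross and Garcia--Ross) for items (i)--(iii) and only invokes Proposition~\ref{Prop:RegRes} for the final clause of (iii). Your route instead derives (i) from the paper's internal Propositions~\ref{Prop:Rovnak} and~\ref{Prop:BoundaryBehaviour}, sketches the Beurling lattice argument for (ii), and reconstructs (iii) from the Douglas--Shapiro--Shields pseudocontinuation characterization of model spaces. This buys self-containment and makes the two-sided/one-sided bridge transparent, at the cost of length; the paper's version is terse but relies entirely on the reader knowing the cited sources. Two small remarks: in (ii) your phrase ``greatest common multiple'' should be that the closed span $V\mathcal{H}^2 \vee V_2\mathcal{H}^2$ equals $\gcd(V,V_2)\,\mathcal{H}^2$, which is all of $\mathcal{H}^2$ when the gcd is a unimodular constant; and in (iii) the step from matching boundary values to the identity $g_{|\C^-}=F/SR(V_{|\C^+})$ implicitly uses the Luzin--Privalov uniqueness theorem for functions of bounded type, which you should name.
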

\begin{proof}
Statement (i) is just a reformulation of  \cite[Proposition 7.14]{garcia2016introduction} and statement (ii) can be found in \cite[Corollary 5.9]{garcia2016introduction}.
Finally, statement (iii) can be found in \cite[Page 43]{garcia2013model}. Note that $g_{|\C^-}$ cannot be an element of $\mathcal{N}^+(\C^-)$  because of  
Proposition \ref{Prop:RegRes}.
\end{proof}
Now we will turn to the general case, in which the outer factors no longer have to be trivial. This is exactly the point at which it is important to take the lower half plane $\C^-$ into account. Indeed, much of what will be said  below is only true for $\mathcal{B}(h)$ spaces and  not for the respective 
de Branges-Rovnak spaces. 
We first prove an intersection result for purely outer functions: 
\begin{lemma}\label{Lem:IntOuter}
Let $h_1, h_2\in \mathcal{S}_0$ be generated by outer functions $O_1$ and $O_2$. If  $h_1$ and $h_2$ are relatively prime, then 
\begin{align*}
\mathcal{B}(h_1)\cap \mathcal{B}(h_2)=\{0\}.
\end{align*}
\end{lemma}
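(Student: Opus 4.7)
The plan is to take an arbitrary $g\in\mathcal{B}(h_1)\cap\mathcal{B}(h_2)$ and force $g\equiv 0$ by exhibiting $g|_{\C^+}$ and $g|_{\C^-}$ as pseudocontinuations of each other that both lie in $\mathcal{H}^2$, so that Proposition~\ref{Prop:RegRes} collapses $g$ to zero.

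First I would place $g$ in the ambient Smirnov class. Because each $O_i$ is outer with $\|O_i\|_{\mathcal{H}^\infty(\C^+)}\le 1$, both $O_i$ and $1/SR(O_i)$ belong to $\mathcal{N}^+$ of the respective half plane (outer functions and their reciprocals are in the Smirnov class), so $h_i\in\mathcal{N}^+(\C\setminus\R)$. Combining the identification $\mathcal{B}(h_i)=\tfrac{1+h_i}{\sqrt 2}\cdot\mathcal{L}(q_i)$ from Theorem~\ref{Thrm:BhLq} with the inclusion $\mathcal{L}(q_i)\subset\mathcal{N}^+(\C\setminus\R)$ from Proposition~\ref{Prop:RealHnfunctions}, one concludes $g\in\mathcal{N}^+(\C\setminus\R)$.

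Next I would produce the pseudocontinuation via Proposition~\ref{Prop:BoundaryBehaviour}. A direct computation of the boundary limits of $h_i$ from above and below shows that $P_{h_i}=\{x\in\R\colon|O_i(x)|=1\}$ up to a $\lambda$-null set; equivalently, $\R\setminus P_{h_i}$ is the essential support of the absolutely continuous measure $\log|O_i|\,d\lambda$. By the characterization of relative primality for elements of $\mathcal{S}_0$ with trivial inner factors, the hypothesis on $h_1,h_2$ translates into mutual singularity of $\log|O_1|\,d\lambda$ and $\log|O_2|\,d\lambda$; hence $(\R\setminus P_{h_1})\cap(\R\setminus P_{h_2})$ is $\lambda$-null and $P_{h_1}\cup P_{h_2}$ has full Lebesgue measure. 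Applying Proposition~\ref{Prop:BoundaryBehaviour} to both $h_1$ and $h_2$ then shows that the upper and lower boundary values of $g$ agree $\lambda$-a.e. on $\R$, so $g|_{\C^+}$ and $g|_{\C^-}$ are pseudocontinuations of each other.

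The hard part will be upgrading this from Smirnov regularity to $\mathcal{H}^2$. I would exploit the explicit parametrization $g=\tfrac{1+h_i}{\sqrt 2}\tilde g_i$ with $\tilde g_i\in\mathcal{L}(q_i)$, together with the fact that $q_i=C^{-1}(h_i)$ is purely absolutely continuous (no singular part, no linear term) with density $\rho_i=(1-|O_i|^2)/(\pi|1+O_i|^2)$. In particular $\tilde g_i$ is the Cauchy transform of a function in $L^2(\rho_i\,d\lambda)$, and its Plemelj--Sokhotski jump is supported in $\{\rho_i>0\}=\R\setminus P_{h_i}$. Thus on $P_{h_i}$ the boundary values of $g$ are controlled by the $\mathcal{B}(h_i)$-side, while on its complement $P_{h_j}$ with $j\ne i$ the $\mathcal{B}(h_j)$-side takes over; the main task is to patch these two complementary controls together into a genuine $L^2(\R)$ bound for the common boundary function of $g$. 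Once this bound is established, Proposition~\ref{Prop:Regres} places $g|_{\C^\pm}$ in $\mathcal{H}^2(\C^\pm)$, and Proposition~\ref{Prop:RegRes} immediately forces $g\equiv 0$.
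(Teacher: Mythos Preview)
Your first two steps match the paper's proof exactly: the Smirnov-class membership of $g$ on both half-planes (your paragraph 2) is the paper's Step~3, and the pseudocontinuation argument via $P_{h_1}\cup P_{h_2}$ having full measure (your paragraph 3) is the paper's Step~1. Note in particular that once you know $g_{|\C^-}\in\mathcal{N}^+(\C^-)$, Proposition~\ref{Prop:RegRes} only requires $g_{|\C^+}\in\mathcal{H}^2(\C^+)$ to conclude $g\equiv 0$; you do not need $\mathcal{H}^2$ on \emph{both} sides.

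The gap is in your ``hard part''. The patching scheme you sketch is neither carried out nor clearly viable: you have not justified why $q_i$ should be purely absolutely continuous, and even granting control of the Plemelj jump of $\tilde g_i$ on $\R\setminus P_{h_i}$, it is not explained why this yields an $L^2(\R)$ bound for the boundary values of $g$ itself. The paper bypasses all of this with a one-line observation you are missing: mutual singularity of $\log|O_1|\,d\lambda$ and $\log|O_2|\,d\lambda$ together with non-triviality of $O_2$ forces $|O_1(x)|=1$ on a set of positive Lebesgue measure, so
\[
\int_\R \frac{\log(1-|O_1(x)|)}{1+x^2}\,dx=-\infty,
\]
i.e.\ $h_1$ is extreme. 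Proposition~\ref{Prop:Rovnak} then gives $g_{|\C^+}\in\mathcal{H}(O_1)\subset\mathcal{H}^2(\C^+)$ directly, and you are done.
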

\begin{proof}
 If either $h_1$ or $h_2$ is trivial, then either $\mathcal{B}(h_1)$ or $\mathcal{B}(h_2)$ is trivial, and in this case  there is nothing to show.  Therefore, we can assume without loss of generality that both $h_1$ and $h_2$ are not trivial, which means that both $O_1$ and $O_2$ are non-constant.
 \par\smallskip
 Let $g \in \mathcal{B}(h_1)\cap \mathcal{B}(h_2)$ and recall that the measures 
$\log(|O_1|))  \mathrm{d} \lambda$ and $\log(|O_2|)  \mathrm{d} \lambda$ on $\R$ are mutually singular. We prove that $g$ has to be zero in three steps:

 \par\medskip
    \emph{1. The functions $g_{|\C^+}$ and $g_{|\C^-}$ are pseudocontinuations of each other.}  
 \newline
First, we observe that for $\lambda$ almost all $x \in \R$ 
\begin{align} \label{For:45809}
    \text{ either }  \lim_{\epsilon \downarrow 0} O_1(x+\mathrm{i}\epsilon) \in \T \text{ or }  \lim_{\epsilon \downarrow 0} O_2(x+\mathrm{i}\epsilon) \in \T. 
\end{align}
Indeed, the boundary limits of $O_1$ and $O_2$ exist  $\lambda$-almost everywhere and are smaller or equal $1$ $\lambda$ a.e. by construction. Moreover, if there existed a set $E\subset \R$ with positive measure on which both $|O_1|$ and $|O_2|$ are strictly smaller than $1$ simultaneously, then the densities $\log(|O_1|)$ and $\log(|O_2|)$ would not be mutually singular. 
\par\smallskip
Now fix a point $x \in \R$ such that $\lim_{\epsilon \downarrow 0} O_j(x+\mathrm{i}\epsilon)\in \T$ for some $j \in \{1,2\}$. Then $x$ belongs to set $P_{h_j}$, which was introduced in Proposition \ref{Prop:BoundaryBehaviour} . To demonstrate this, recall that $P_{h_j}$ was defined as 
\begin{align*}
    P_{h_j}=\left\{  x \in \R \vcentcolon \lim_{\epsilon \downarrow 0} h_j(x+\mathrm{i}\epsilon)
    =\lim_{\epsilon \downarrow 0} h_j(x-\mathrm{i}\epsilon) \right\}.
\end{align*}
Moreover, note that  $O_j$ coincides with $(h_j)_{|\C^+}$. Then we calculate 
    \begin{align*}
    \lim_{\epsilon \downarrow 0} h_j(x-\mathrm{i}\epsilon)= \frac{1}{\lim_{\epsilon \downarrow 0} \overline{h_j(x+\mathrm{i}\epsilon)}} = \lim_{\epsilon \downarrow 0} h_j(x+\mathrm{i}\epsilon),
\end{align*} 
where we have used the relation $(h_j)_{|\C^-} =\frac{1}{SR((h_j)_{|\C^+})}$ in the first equality and that $\lim_{\epsilon \downarrow 0} O_j(x+\mathrm{i}\epsilon)\in \T$ in the second. Therefore, it follows from equation \eqref{For:45809} that the set 
$P_{h_1}\cup P_{h_2} \subset \R$ has full measure. 
\par\smallskip
Now, since $g \in \mathcal{B}(h_1)\cap \mathcal{B}(h_2)$ we conclude using Proposition \ref{Prop:BoundaryBehaviour} that 
\begin{align*}
    \lim_{\epsilon \downarrow 0} g(x+\mathrm{i}\epsilon)
    =\lim_{\epsilon \downarrow 0} g(x-\mathrm{i}\epsilon) 
    \text{ for $\lambda$ almost all $x \in P_{h_1}\cup P_{h_2}$ or for $\lambda$ almost all $x \in  \R$}.
\end{align*} 
This shows that functions $g_{|\C^+}$ and $g_{|\C^-}$ are pseudocontinuations of each other.
\color{black}
\par\medskip
    \emph{2. The function $g_{|\C^+}$ is an element of $\mathcal{H}^2(\C^+)$}
 \newline
  It is sufficient to prove that either 
   $O_1$ and $O_2$ is extreme (Proposition \ref{Prop:Rovnak}). Since $O_2$ is non-constant, there exists a set  $E\subset \R$ such that
   \begin{align*}
        \lambda(E)>0 \quad  \text{ and } \quad \log(|O_2(x)|)<0 \quad \text{ for $\lambda$ almost all $x\in E$}.
   \end{align*}
    Consequently, $\log(|O_1(x)|)=0$, or equivalently $|O_1(x)|=1$, holds $\lambda$ a.e. on $E$. Thus  $O_1$ is extreme. 
 \par\medskip
    \emph{3. The function $g_{|\C^-}$ is an element of $\mathcal{N}^+(\C^-)$}.
 \newline
 Let $q_i=C^{-1}(h_i)$ be the associated Herglotz-Nevanlinna function. Moreover, recall that $\mathcal{B}(h_i)$  is isomorphic to  $\mathcal{L}(q_i)$ via the multiplication
 \begin{align*}
    \mathcal{B}(h_i)=\frac{1+h_i}{\sqrt{2}} \cdot \mathcal{L}(q).
\end{align*}
We have seen in Proposition \ref{Prop:RealHnfunctions} that $\mathcal{L}(q_i)\subset \mathcal{N}^+(\C \setminus \R)$. Moreover, it also holds that $(h_i)_{|\C^-} \in  \mathcal{N}^+(\C^- )$, since 
\begin{align*}
    (h_i)_{|\C^-}=\frac{1}{SR(O_i)}
\end{align*}
is an outer function. This means that $g_{|\C^-}$ is an element of $\mathcal{N}^+(\C^-)$ because  $\mathcal{N}^+(\C^-)$ is closed under multiplications.
\par\smallskip
Combining the three statements above with Proposition \ref{Prop:RegRes} yields that $g=0$. 
\end{proof}
The final argument missing for the proof of Theorem \ref{basic1.5} is  the following
rather basic observation:
\begin{lemma} \label{Lem:Div}
    Let $g \in \mathcal{N}(\C^+) \setminus \mathcal{N}^+(\C^+)$. Furthermore, assume that there are two inner functions $W_1$ and $W_2$ such that $g \cdot W_1$ and $g \cdot W_2$ are elements of $\mathcal{N}^+(\C^+)$. Then there exists a non-trivial inner function which divides both $W_1$ and $W_2$.
\end{lemma}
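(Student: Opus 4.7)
The plan is to exhibit a suitable reduced representation $g = f_1/f_2$ with $f_1,f_2\in\mathcal{H}^\infty(\C^+)$ and then extract the desired common inner divisor from the inner factor of the denominator. More precisely, first I would factor $f_j = V_j O_j$ into inner and outer parts, and then cancel the greatest common inner divisor of $V_1$ and $V_2$ from both the numerator and the denominator; this uses the fact that the set of inner functions on $\C^+$ (modulo unimodular constants) forms a lattice under divisibility, so such a gcd exists as an inner function. After this reduction I may assume $V_1$ and $V_2$ are relatively prime.

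Next, I would argue that the hypothesis $g\notin\mathcal{N}^+(\C^+)$ forces $V_2$ to be non-trivial. Indeed, if $V_2$ were unimodular constant, then $f_2$ itself would be outer (up to scalar), and $g = f_1/f_2$ would exhibit $g$ as a Smirnov function, contradicting the assumption.

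The main computation is then as follows. For each $i\in\{1,2\}$, the hypothesis $g\cdot W_i\in\mathcal{N}^+(\C^+)$ lets me write $g\cdot W_i = \phi_i/\psi_i$ with $\phi_i,\psi_i\in\mathcal{H}^\infty(\C^+)$ and $\psi_i$ outer. Cross-multiplying with $g = f_1/f_2$ yields the identity
\begin{align*}
f_1\, W_i\, \psi_i \;=\; f_2\, \phi_i \qquad \text{in } \mathcal{H}^\infty(\C^+).
\end{align*}
By uniqueness (up to unimodular constant) of the inner-outer factorization and multiplicativity of inner parts, the inner factor of the left-hand side, namely $V_1 W_i$ (the factors $O_1$ and $\psi_i$ being outer), must coincide with the inner factor of the right-hand side, namely $V_2\cdot U_i$, where $U_i$ denotes the inner part of $\phi_i$. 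Hence $V_2$ divides $V_1 W_i$, and since $V_1$ and $V_2$ are relatively prime it follows that $V_2$ divides $W_i$. Applied to $i=1,2$, this produces a single non-trivial inner function, namely $V_2$, which divides both $W_1$ and $W_2$, proving the lemma.

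The only subtle point is the reduction to relatively prime $(V_1,V_2)$ at the start, which hinges on the existence of greatest common inner divisors (concretely, combine the common zeros of the Blaschke parts, the infimum of the singular measures, and the minimum of the exponential factors); this is a standard fact I would cite from the preliminaries. Everything else is a direct application of the multiplicativity and uniqueness of the inner-outer factorization in $\mathcal{H}^\infty(\C^+)$, which have already been recalled in Section~\ref{Sec:BoundedType}.
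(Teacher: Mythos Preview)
Your proof is correct and follows essentially the same approach as the paper: write $g$ in the canonical reduced form $g=\frac{V_1}{V_2}\cdot F$ with $V_1,V_2$ relatively prime inner and $F$ outer, observe that $V_2$ must be non-trivial since $g\notin\mathcal{N}^+(\C^+)$, and conclude that $V_2$ must divide each $W_i$. The paper's argument is simply a terser version of yours, asserting in one line that ``$W_i$ has to cancel out $V_2$'' where you spell out the cross-multiplication and comparison of inner parts.
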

\begin{proof}
We can write the function $g$ as
\begin{align*}
    g= \frac{V_1}{V_2} \cdot F
\end{align*}
where $V_1$ and $V_2$ are inner functions such that $V_2$ is non trivial and $V_1$ and $V_2$ are relatively prime, and $F$ is an outer function. In order for $g \cdot W_1$ or $g \cdot W_2$ to be an element of $\mathcal{N}^+(\C^+)$, the inner function $W_1$ or $W_2$ has to cancel out $V_2$. This means that $V_2$ divides both $W_1$ and $W_2$. 
\end{proof}
\begin{proof}[Proof of Theorem \ref{basic1.5}]
First assume that there exists a non-trivial  function $m \in \mathcal{S}_0$ that divides both $h_1$ and $h_2$. Let  $m_1, m_2 \in \mathcal{S}_0$ be such that $m \cdot m_i=h_i$.  It is straightforward to verify that
\begin{align*}
    s_{h_i}(\zeta,w) = s_{m}(\zeta,w)+m(\zeta) \cdot s_{m_i}(\zeta,w) \cdot\overline{ m(w)}.
\end{align*}
Consequently, we see that the kernel $s_{h_i}$ is the sum of two positive definite kernels. In this case, it is well known \cite[Part 1, Section 6]{7069433019500501}, that 
\begin{align*}
    \mathcal{B}(h_i)= \mathcal{B}(m)+ m \cdot \mathcal{B}(m_i),
\end{align*}
where the sum is not necessarily direct. Thus, $\mathcal{B}(m) \subset \mathcal{B}(h_1)\cap \mathcal{B}(h_2)$ holds, which concludes the proof of this direction.
\par\medskip
Conversely, let $h_1$ and $h_2$ be elements of $\mathcal{S}_0$ such that  $h_1$ and $h_2$ are relatively prime. We can decompose  $(h_i)_{|\C^+}$ as $(h_i)_{|\C^+}=V_i \cdot O_i$, where $V_i$ is an inner and $O_i$ is an outer function. 
The same reasoning as above yields 
\begin{align*}
    \mathcal{B}(h_i)=\mathcal{B}(o_i)+o_i  \mathcal{B}(v_i),
\end{align*}
where $o_i$ and $v_i$ denote the functions in $\mathcal{S}_0$ generated by  $O_i$ and $V_i$ respectively. 
\par\smallskip
Let
$g \in \mathcal{B}(h_1)\cap \mathcal{B}(h_2)$. Then we can write $g$ as a sum 
\begin{align*}
    g=f_1 + o_1 \cdot k_1=f_2 + o_2 \cdot k_2 , \quad \text{ with } f_i \in \mathcal{B}(o_i), \ k_i \in \mathcal{B}(v_i).
\end{align*}
We have seen in the proof of Lemma \ref{Lem:IntOuter} that $(f_i)_{|\C^-}\in \mathcal{N}^+(\C^-)$ and that $(o_i)_{|\C^-} \in \mathcal{N}^+(\C^-)$. From Proposition \ref{Schur:Basic2} we know that $(k_i)_{|\C^-}$ is either trivial or $(k_i)_{|\C^-} \notin \mathcal{N}^+(\C^-)$. In the latter case, it then holds that 
\begin{align*}
(k_i)_{|\C^-} \cdot SR(V_i) \in \mathcal{N}^+(\C^-), \  i \in \{1,2\}.
\end{align*}
First assume that $g_{|\C^-} \in \mathcal{N}^+(\C^-)$. Then it follows that both $(k_1)_{|\C^-}$ and $(k_2)_{|\C^-}$ have to be trivial, which in turn implies that $g \in \mathcal{B}(o_1)\cap \mathcal{B}(o_2) $. However, their intersection is trivial by Lemma \ref{Lem:IntOuter}.
\par\smallskip
Now assume that $g_{|\C^-} \notin \mathcal{N}^+(\C^-)$. In this case,  we observe that both $g_{|\C^-} \cdot SR(V_1)$ and 
$g_{|\C^-} \cdot SR(V_2)$ are elements of $\mathcal{N}^+(\C^-)$. Consequently, it follows that the inner functions $SR(V_1)$ and $SR(V_2)$ and in turn also $V_1$ and $V_2$ have a nontrivial common divisor by Lemma \ref{Lem:Div}. This contradicts our assumption that  $h_1$ and $h_2$ are relatively prime.   
\end{proof}
\color{black}
\subsection{The behaviour on the boundary}
Now we give a proof Lemma  \ref{BoundaryBehaviour}, which is restated for convenience here:
\begin{proposition}\label{lem:boundary}
Let $h_1,h_2 \in \mathcal{S}_0$ such that they are relatively prime. Then the function 
\begin{align*}
    f= \frac{h_1}{h_2}
\end{align*}
can be analytically continued through an open interval $(a,b)\subset \R$ if and only if both $h_1$ and $h_2$ can be analytically continued through $(a,b)$. 
\end{proposition}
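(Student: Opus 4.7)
The $(\Leftarrow)$ direction is immediate: if both $h_1$ and $h_2$ continue analytically through $(a,b)$, the symmetry $h_i|_{\C^-}=1/SR(h_i|_{\C^+})$ forces $|h_i|=1$ on $(a,b)$, so $h_2$ has no zero in a complex neighbourhood of $(a,b)$ and $f=h_1/h_2$ is analytic there. For the converse, assume $f=h_1/h_2$ is analytic on an open neighbourhood $U$ of $(a,b)$. Write each factor as $h_i|_{\C^+}=B_i\cdot S_i\cdot O_i$ (Blaschke, singular inner, outer). The plan is to verify that each of $B_i$, $S_i$ and $O_i$ extends analytically through $(a,b)$; by Proposition~\ref{Prop:AnaExt} applied to the inner parts together with Schwarz reflection for the outer parts, this yields the extension of $h_i$.

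A direct computation using $h_i|_{\C^-}=1/SR(h_i|_{\C^+})$ gives $f|_{\C^-}(\zeta)=\overline{(h_2/h_1)(\bar\zeta)}$, so matching boundary values of $f$ from above and below on $(a,b)$ forces $|h_1(x+i0)|=|h_2(x+i0)|$ a.e.\ there. Since $|B_i|=|S_i|=1$ a.e.\ on $\R$, this collapses to $|O_1|=|O_2|\leq 1$ a.e.\ on $(a,b)$; combined with the mutual singularity of the measures $\log|O_1|\,d\lambda$ and $\log|O_2|\,d\lambda$ that is part of the relatively prime condition in $\mathcal{S}_0$, the common value can only be $1$ a.e. The integral representation of outer functions then no longer sees $(a,b)$ in its integrand, so $O_1$ and $O_2$ extend analytically through $(a,b)$ by Schwarz reflection.

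For the Blaschke parts, if zeros $(\zeta_n)$ of $B_1$ accumulated at some $x_0\in(a,b)$, relative primeness would give $h_2(\zeta_n)\neq 0$, so $f(\zeta_n)=0$ would accumulate at $x_0\in U$; but $f$ is analytic on $U$ with $|f|=1$ on $(a,b)$, forcing $f\equiv 0$ on the component of $U$ containing $x_0$, a contradiction. The same argument applied to the poles of $f$ rules out accumulation of the zeros of $B_2$.

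The main obstacle lies with the singular parts. Having factored out $B_iO_i$, the ratio $S_1/S_2$ must extend analytically (with modulus $1$) through $(a,b)$. To derive a contradiction from $\mathrm{supp}(\mu_i)\cap(a,b)\neq\emptyset$, the strategy is to inspect the non-tangential behaviour via the Poisson representation
\begin{equation*}
-\log|S_i(x_0+i\epsilon)|=\alpha_i\epsilon+\frac{1}{\pi}\int_\R\frac{\epsilon}{(t-x_0)^2+\epsilon^2}\,d\mu_i(t).
\end{equation*}
At $\mu_i$-almost every $x_0$ this Poisson integral tends to $+\infty$ as $\epsilon\downarrow 0$, while mutual singularity $\mu_1\perp\mu_2$ keeps the corresponding integral against the other measure bounded at such points. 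Hence $|S_1/S_2|$ cannot remain locally bounded near any such $x_0$, contradicting the analytic extension of $f$. Turning this Poisson-integral/mutual-singularity heuristic into a rigorous statement, using standard facts about the non-tangential behaviour of Poisson integrals of singular measures, is the delicate step of the proof.
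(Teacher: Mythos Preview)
Your treatment of the Blaschke and outer factors is correct and in the same spirit as the paper. The gap is in the singular inner step: the claim that mutual singularity $\mu_1\perp\mu_2$ forces the Poisson integral of $\mu_2$ to remain \emph{bounded} at $\mu_1$-a.e.\ point is not true in general. Two mutually singular measures can be supported on the same compact set, and both Poisson integrals can blow up along the same sequence; what differentiation theorems give you is only $\mu_2(x-r,x+r)=o(\mu_1(x-r,x+r))$ for $\mu_1$-a.e.\ $x$, which does not control $\mu_2(x-r,x+r)/r$. So the pointwise comparison you sketch does not close.

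The paper avoids this difficulty by a cleaner, more global argument. It does \emph{not} separate the singular inner and outer parts; instead it writes $(h_i)_{|\C^+}=B_i\cdot\exp\bigl(\frac{-i}{\pi}\int(\frac{1}{t-\zeta}-\frac{t}{1+t^2})\,d\mu_i\bigr)\cdot e^{i\alpha_i\zeta}$ with $\mu_1,\mu_2$ mutually singular (this combines the singular inner measure and the outer density $\log|O_i|\,d\lambda$ into one negative measure). After disposing of the $\alpha_i$ and the Blaschke factors as you did, one has $f$ analytic and zero-free on $\C^+\cup(a,b)$ with $|f|=1$ on $(a,b)$, and
\[
q(\zeta):=-i\log f(\zeta)=\frac{1}{\pi}\int_\R\Bigl(\frac{1}{t-\zeta}-\frac{t}{1+t^2}\Bigr)\,d(\mu_1-\mu_2)(t).
\]
The signed measure $\mu_1-\mu_2$ is recovered as the weak-$*$ limit of the densities $p_y(t)=\mathrm{Im}\,q(t+iy)=\log|f(t+iy)|$, and since $f$ is analytic with $|f|=1$ on $(a,b)$ these converge to $0$ locally uniformly there. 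Hence $(\mu_1-\mu_2)\cdot\mathbbm{1}_{(a,b)}=0$, and mutual singularity forces $\mu_1\cdot\mathbbm{1}_{(a,b)}=\mu_2\cdot\mathbbm{1}_{(a,b)}=0$. This is a measure-level statement about the \emph{difference}, so no pointwise control of either Poisson integral separately is needed; it simultaneously handles the singular inner and outer contributions that you treated in two stages.
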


\begin{proof}
First, assume that  both $h_1$ and $h_2$ can be analytically continued through $(a,b)$. Then $h_1$ and $h_2$ have absolute value 1 on $(a,b)$, and are in particular not zero. Consequently, 
the function $f$ can be analytically continued through $(a,b)$ as well. 
\par\smallskip
Conversely, let $f$ be analytically continuable through $(a,b)$. Then it follows from the definition of $h_1$ and $h_2$ that  
\begin{align*}
\overline{f(\zeta)}=\frac{1}{f(\overline{\zeta})} \quad \forall \zeta \in \C \setminus \R,
\end{align*}
and we conclude that $|f(w)|=1$ on $(a,b)$. Since  $h_1$ and $h_2$ are relatively prime, it follows that $h_1$ and $h_2$ can be written as 
\begin{align*}
    (h_1)_{|\C^+}(\zeta)&= B_1(\zeta)  \exp\left( \frac{-\mathrm{i}}{\pi} \int_{\R}   \left( \frac{1}{t-\zeta}-\frac{t}{1+t^2} \right)  \mathrm{d}\mu_1 (t)  \right)    \exp(\mathrm{i} \alpha_1 \zeta)  \\ 
    (h_2)_{|\C^+}(\zeta)&= B_2(\zeta)  \exp\left(   \frac{-\mathrm{i}}{\pi} \int_\R \left( \frac{1}{t-\zeta}-\frac{t}{1+t^2} \right)   \mathrm{d}  \mu_2 (t)  \right)    \exp(\mathrm{i} \alpha_2 \zeta) 
\end{align*} 
where $B_1$ and $B_2$ are Blaschke products without common zeros, $\mathrm{d}\mu_1$ and $\mathrm{d}\mu_2$ are negative measures which are mutually singular, and at least one of $\alpha_1$ and $\alpha_2$ is zero. Since a function of the form $\exp(\mathrm{i} \alpha \zeta) $ is analytic on $\C$ anyway, we can assume without loss of generality that $\alpha_1=\alpha_2=0$.
\par\smallskip
Assume, for sake of contradiction, that there exists an element $ \zeta \in (a,b)$ such that $\zeta$ is an accumulation point for the zeros of either $B_1$ or $B_2$. Then, since $B_1$ and $B_2$ have no common zeros,  there exists a sequence of zeros or poles of $f$ in $\C^+$ converging to $ \zeta$. This  contradicts that $f$ is analytic on $(a,b)$. Therefore,  $\zeta$ cannot be such an accumulation point, and we conclude that the functions in $\mathcal{S}_0$ generated by $B_1$ and $B_2$ are analytic on $(a,b)$ and do not vanish there 
(Proposition \ref{Prop:AnaExt}).
\par\smallskip
Therefore, we can assume without loss of generality that $B_1=B_2=1$, since 
$f$ is analytic on $(a,b)$ if and only if $f \cdot \frac{B_2}{B_1}$ is analytic on $(a,b)$. In other words, we assume that the function $f$ is analytic on $\C^+ \cup (a,b)$ and does not vanish there. This allows us to consider  
\begin{align*}
q(\zeta)= -\mathrm{i}  \log(f)(\zeta)=  \frac{1}{\pi}  \int_\R \left( \frac{1}{t-\zeta}-\frac{t}{1+t^2} \right)\mathrm{d} (\mu_1 - \mu_2) (t).
\end{align*}
Note that $q$ is also analytic on $\C^+\cup (a,b)$. It is known \cite[Corollary 4.3]{luger2019quasiherglotz}, that the measure $\mu_1-  \mu_2$
is given by the weak-$(*)$ limit of the densities
\begin{align*}
 p_y(t)=\mathrm{Im}(q)(t+iy) 
 = \log{|f(t+iy)|} , \quad y>0.
\end{align*}
Since $|f(w)|=1$ on $(a,b)$ and $f$ is analytic on $(a,b)$, it follows that the densities $p_y$ go to zero locally uniformly on $(a,b)$.  Therefore, we conclude that  $(\mu_1-  \mu_2) \cdot \mathbbm{1}_{(a,b)}$
is the trivial measure. 
In other words, it holds that 
\begin{align*}
    \mu_1 \cdot \mathbbm{1}_{(a,b)}= \mu_2 \cdot \mathbbm{1}_{(a,b)}.
\end{align*}
In summary, $\mu_1 \cdot \mathbbm{1}_{(a,b)}$ and $\mu_2 \cdot \mathbbm{1}_{(a,b)}$ are both absolutely continuous with respect to each other (they are equal) and  mutually singular by construction. 
This means that $\mu_j \cdot \mathbbm{1}_{(a,b)}=0$ for $j=1,2$ (see e.g. 
\cite[Chapter 2: Exercise 2.13.2]{Friedman1982fom}). 
\par\smallskip
Finally, since $\mu_j \cdot \mathbbm{1}_{(a,b)}$ are the trivial measures, we conclude that the Herglotz-Nevanlinna functions 
\begin{align*}
    \frac{1}{\pi} \int_{\R}   \left( \frac{1}{t-\zeta}-\frac{t}{1+t^2} \right)  \mathrm{d}\mu_j (t)
\end{align*}
are analytic on the interval $(a,b)$ \cite[Theorem 3.20]{teschl2009mathematical}. This means that $h_1$ and $h_2$ are analytic on $(a,b)$ since they are given by the following formula: 
\begin{align*}
    h_j(\zeta)&=   \exp\left( \frac{-\mathrm{i}}{\pi} \int_{\R}   \left( \frac{1}{t-\zeta}-\frac{t}{1+t^2} \right)  \mathrm{d}\mu_j (t)  \right).
\end{align*}
Recall that we had treated $B_j$ and $\exp(\mathrm{i} \alpha_j \zeta)$ separately and were therefore able to assume without loss of generality that they are trivial.
\color{black}
\end{proof}

\section{Model relations on Krein spaces}
The construction of minimal realizations presented in Section \ref{Sec:Construction} 
defines an interesting class of self-adjoint relations on Krein spaces, which we will call model relations. In this section we describe  their  structure  in  detail. Our first Proposition is  a simple transformation result, which  will turn out  useful at a later stage.  
\begin{proposition}
Let $f \in \mathcal{N}_{sym}$ and $(\mathcal{L}(f),A_f,\phi_f)$ be the realization constructed in Section \ref{Sec:Construction}, where 
\begin{align*}
    \mathcal{L}(f)  = \frac{\sqrt{2}}{h_1+h_2} \cdot (\mathcal{B}(h_1) \oplus (- \mathcal{B}(h_2))
\end{align*}
for a pair of functions $h_1,h_2 \in \mathcal{S}_0$ which are relatively prime. Moreover, let 
\begin{align*}
    T \vcentcolon (\mathcal{B}(h_1) \oplus (- \mathcal{B}(h_2)) \rightarrow \mathcal{L}(f), \quad 
    g \mapsto  \frac{\sqrt{2}}{h_1+h_2} \cdot g
\end{align*}
denote the isometric isomorphism. Then the relation  $\tilde{A}_f = T^{-1} A_f T$ has the following form: 
\begin{align*}
     \tilde{A}_f = \left\{(h,g) \in (\mathcal{B}(h_1)\oplus (-\mathcal{B}(h_2)))^2 \ | \ \exists c \in \C \vcentcolon   g(\zeta)-\zeta  h(\zeta) \equiv c  \left( h_1(\zeta) + h_2(\zeta)  \right)   \right\}.
\end{align*}

\end{proposition}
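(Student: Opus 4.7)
The plan is to obtain the description of $\tilde{A}_f$ by pulling back the canonical form of $A_f$ given in Proposition~\ref{Prop:Canreal} through the conjugation isomorphism, and then simplifying algebraically.

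First, I would invoke Proposition~\ref{Prop:Canreal} applied to the modeling reproducing kernel space $\mathcal{L}(f)$. This gives immediately the characterization
\begin{align*}
A_f = \left\{(H,G)\in \mathcal{L}(f)\times\mathcal{L}(f) \vcentcolon \exists c \in \C, \ G(\zeta)-\zeta\cdot H(\zeta)\equiv c \right\}.
\end{align*}
Next, I would recall that the conjugation used to define $\mathcal{L}(f)$ in Section~\ref{Sec:Construction} is precisely the map from Proposition~\ref{Prop:Conj} with symbol $g=\sqrt{2}/(h_1+h_2)$; that is,
\begin{align*}
M \vcentcolon \mathcal{B}(h_1)\oplus -\mathcal{B}(h_2) \rightarrow \mathcal{L}(f), \quad h \mapsto \tfrac{\sqrt{2}}{h_1+h_2}\cdot h,
\end{align*}
is an isomorphism of Krein spaces. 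By definition of transformation of relations under an isomorphism, $\tilde{A}_f = M^{-1} A_f M$, so $(h,g)\in\tilde{A}_f$ if and only if $(M h,M g)\in A_f$.

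Then the computation is essentially one line. Writing out the condition $(Mg)(\zeta)-\zeta\cdot(Mh)(\zeta)\equiv c$ gives
\begin{align*}
\tfrac{\sqrt{2}}{h_1(\zeta)+h_2(\zeta)}\cdot g(\zeta) - \zeta\cdot\tfrac{\sqrt{2}}{h_1(\zeta)+h_2(\zeta)}\cdot h(\zeta) \equiv c,
\end{align*}
and multiplying through by $(h_1(\zeta)+h_2(\zeta))/\sqrt{2}$ yields
\begin{align*}
g(\zeta)-\zeta\cdot h(\zeta)\equiv \tfrac{c}{\sqrt{2}}\cdot (h_1(\zeta)+h_2(\zeta)).
\end{align*}
Renaming the scalar $c/\sqrt{2}$ as the new constant (which ranges over all of $\C$ as $c$ does) gives exactly the asserted description of $\tilde{A}_f$.

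There is no real obstacle here; the only point requiring a brief remark is that the multiplication by $(h_1+h_2)/\sqrt{2}$ is harmless because $h_1+h_2$ is a nonzero meromorphic function on $\C\setminus\R$, so the two pointwise identities are equivalent in the sense of equality of meromorphic functions (both sides being analytic on the common domain used to realize $\mathcal{B}(h_1)\oplus -\mathcal{B}(h_2)$ as a space of analytic functions). Hence the proposition follows directly from Proposition~\ref{Prop:Canreal} combined with the definition of the conjugation.
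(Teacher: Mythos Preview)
Your proposal is correct and follows exactly the approach the paper intends: the paper simply recalls the description of $A_f$ from Proposition~\ref{Prop:Canreal} and then states that ``it is then straightforward to verify that $\tilde{A}_f$ has the claimed form,'' which is precisely the one-line computation you carry out explicitly.
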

The advantage of considering $A_f$ on $\mathcal{B}(h_1)\oplus -\mathcal{B}(h_2)$ is that in this case we are given an explicit fundamental decomposition consisting of two well-studied function spaces. 
\begin{proof}
Recall that $A_f$ on $\mathcal{L}(q)$ is given by 
\begin{align*}
A_f= \{ (h,g)\in \mathcal{L}(q) \times \mathcal{L}(q) \vcentcolon \exists c \in \C \vcentcolon g(\zeta)-\zeta  h(\zeta)\equiv c \}.
\end{align*} 
It is then straightforward to verify that $\tilde{A}_f$ has the claimed form. 
\end{proof} 
The theory developed in \cite{article55} provides a suitable framework to describe the structure of model relations. Following their approach, we introduce partially fundamentally reducible relations:
\begin{definition}
 Let $A$ be a self-adjoint relation in a Krein space $\mathcal{K}$ satisfying $\varrho(A)\neq \emptyset$. Then 
 \begin{itemize}
 \item the relation $A$ is called  fundamentally reducible if there exists a fundamental decomposition $\mathcal{K}=\mathcal{K}_+ [+] \mathcal{K}_-$ such that  $A$ splits into an orthogonal sum $A=A_1 \oplus A_2$ with respect to this decomposition.
 \item the relation $A$ is called partially fundamentally reducible if 
    there exists a fundamental decomposition $\mathcal{K}=\mathcal{K}_+ [+] \mathcal{K}_-$ such that the relations
\begin{align*}
    S_+ \vcentcolon = A \cap (\mathcal{K}_+ \times \mathcal{K}_+) 
    \quad \text{ and } \quad 
    S_- \vcentcolon = A \cap (\mathcal{K}_- \times \mathcal{K}_-) 
\end{align*}
are closed, simple symmetric operators on the Hilbert spaces $\mathcal{K}_+$ and $-\mathcal{K}_-$ with deficiency index $(1,1)$.
\end{itemize}
\end{definition}
The structure of  fundamentally reducible relations is relatively easy to analyse, since such a  relation is also self-adjoint with respect to the induced Hilbert space inner product. As the name suggests, partially fundamentally reducible relations are relatively close to fundamentally reducible ones. This  will be specified  at a later stage. First, we  prove that model relations are indeed of that form:
\begin{theorem}\label{Thrm:orthcoup}
    Let $A_f$ be a model relation corresponding to a function $f$ such that neither $f$ nor $-f$   is  a Herglotz-Nevanlinna function. Then $A_f$ is partially fundamentally reducible.   
\end{theorem}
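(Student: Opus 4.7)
The strategy is to use the canonical fundamental decomposition $\mathcal{L}(f) = \mathcal{K}_+ [+] \mathcal{K}_-$ arising from the construction in Section \ref{Sec:Construction}, where $\mathcal{K}_+ \vcentcolon = \frac{\sqrt{2}}{h_1+h_2} \mathcal{B}(h_1)$ and $\mathcal{K}_- \vcentcolon = \frac{\sqrt{2}}{h_1+h_2} \mathcal{B}(h_2)$, and to identify the symmetric restrictions $S_\pm \vcentcolon = A_f \cap (\mathcal{K}_\pm \times \mathcal{K}_\pm)$ with multiplication operators on the Herglotz--Nevanlinna model spaces $\mathcal{L}(q_1)$ and $\mathcal{L}(q_2)$. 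Since by hypothesis neither $f$ nor $-f$ is Herglotz--Nevanlinna, both $h_1$ and $h_2$ are non-trivial elements of $\mathcal{S}_0$, so $\mathcal{B}(h_1)$ and $\mathcal{B}(h_2)$ are genuine non-zero Hilbert spaces.

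By Theorem \ref{Thrm:BhLq} one has $\mathcal{B}(h_i) = \frac{1+h_i}{\sqrt{2}} \mathcal{L}(q_i)$ where $q_i \vcentcolon = C^{-1}(h_i)$ is a Herglotz--Nevanlinna function. Composing with the conjugation defining $\mathcal{K}_\pm$, I obtain unitary isomorphisms $\mathcal{L}(q_1) \to \mathcal{K}_+$ and $\mathcal{L}(q_2) \to -\mathcal{K}_-$, both implemented by multiplication with $\frac{1+h_i}{h_1+h_2}$. Under these isomorphisms the multiplication-by-$\zeta$ operator $S_{q_i}$ on $\mathcal{L}(q_i)$ transfers to the multiplication-by-$\zeta$ operator $\tilde S_\pm$ on $\mathcal{K}_\pm$. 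By Proposition \ref{Prop:Symop1}, $S_{q_i}$ is closed, simple, symmetric with deficiency index $(1,1)$, and these properties pass to $\tilde S_\pm$.

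The core of the proof is the identification $S_\pm = \tilde S_\pm$. The inclusion $\tilde S_\pm \subset S_\pm$ follows directly from Proposition \ref{Prop:Canreal}: if $h, \zeta h \in \mathcal{K}_+$, then $(h, \zeta h) \in A_f$ with constant $c = 0$, so $(h, \zeta h) \in S_+$. For the reverse inclusion, let $(h, g) \in S_+$ with $g - \zeta h \equiv c$. Writing $h = \frac{1+h_1}{h_1+h_2} H$ and $g = \frac{1+h_1}{h_1+h_2} G$ with $H, G \in \mathcal{L}(q_1)$, the condition becomes
\begin{equation*}
G - \zeta H = c \cdot \frac{h_1 + h_2}{1 + h_1}.
\end{equation*}
By Proposition \ref{Prop:RealHnfunctions} we have $\mathcal{L}(q_1) \subset \mathcal{N}^+(\C \setminus \R)$, and since this class is closed under multiplication by $\zeta$ and addition, the left-hand side lies in $\mathcal{N}^+(\C \setminus \R)$. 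The main obstacle is to show that the relative primality of $h_1$ and $h_2$, combined with Proposition \ref{Prop:RegRes} and a careful analysis of the inner--outer structures on the lower half-plane in the spirit of the proof of Theorem \ref{basic1.5}, forces $\frac{h_1+h_2}{1+h_1} \notin \mathcal{N}^+(\C \setminus \R)$ whenever $h_1$ and $h_2$ are non-trivial and relatively prime. This in turn yields $c = 0$ and hence $(h, g) \in \tilde S_+$. The argument for $S_-$ is symmetric with the roles of $h_1$ and $h_2$ interchanged.

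Once $S_\pm = \tilde S_\pm$ is established, the asserted properties (closed, simple, symmetric, deficiency index $(1,1)$) are inherited by $S_\pm$ from $S_{q_i}$ via the unitary isomorphisms, completing the proof of partial fundamental reducibility with respect to the decomposition $\mathcal{K}_+ [+] \mathcal{K}_-$.
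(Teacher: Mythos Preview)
Your overall strategy is sound: you use the correct fundamental decomposition, you correctly set up the unitary transfer from $\mathcal{L}(q_i)$ to $\mathcal{K}_\pm$, and the easy inclusion $\tilde S_\pm \subset S_\pm$ is fine. The gap is precisely where you say it is, and it is genuine: you do not actually show that $c=0$. Your proposed route---arguing that $G-\zeta H\in\mathcal{N}^+(\C\setminus\R)$ while $\tfrac{h_1+h_2}{1+h_1}\notin\mathcal{N}^+(\C\setminus\R)$---is not carried out, and establishing the second claim from relative primality would require an inner/outer case analysis of roughly the same depth as Theorem~\ref{basic1.5} itself. In effect you are trying to reprove the trivial-intersection theorem in disguise rather than invoking it.

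The paper's argument for $c=0$ is much shorter and avoids all Smirnov-class considerations. After passing to $\mathcal{B}(h_1)\oplus(-\mathcal{B}(h_2))$ (where $\tilde A_f=\{(h,g):g-\zeta h=c(h_1+h_2)\}$), one simply applies $D_w$ to the equation $g-\zeta h=c(h_1+h_2)$. The left-hand side yields $D_w(g)-wD_w(h)-h\in\mathcal{B}(h_1)$ because $D_w$ acts boundedly on $\mathcal{B}(h_1)$; the right-hand side is $c(D_w(h_1)+D_w(h_2))$ with $D_w(h_i)\in\mathcal{B}(h_i)$ by equation~\eqref{For:WichtigesElement}. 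Hence $cD_w(h_2)\in\mathcal{B}(h_1)\cap\mathcal{B}(h_2)=\{0\}$ by Theorem~\ref{basic1.5}, and since $D_w(h_2)$ is a nonzero kernel element, $c=0$. This two-line computation replaces your entire ``main obstacle'' and is the key idea you are missing.
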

\begin{proof}
    We  assume without loss of generality that $A_f$ is given by 
    \begin{align*}
    A_f = \left\{(h,g) \in (\mathcal{B}(h_1)\oplus -\mathcal{B}(h_2))^2 | \ \exists c \in \C \vcentcolon   g(\zeta)-\zeta  h(\zeta) \equiv c  \left(    h_1(\zeta) + h_2(\zeta)  \right)   \right\}.
\end{align*}
Consider an element $(h,g) \in A_f \cap (\mathcal{B}(h_1) \times \mathcal{B}(h_1))$. Then there exists a constant $c \in \C$ such that 
\begin{align*}
    g(\zeta)-\zeta  h(\zeta) \equiv c  \left(  h_1(\zeta) + h_2(\zeta)  \right).
\end{align*}
Applying $D_w$ to the left-hand side, for a suitable 
$w$, and using its multiplication identity, we obtain
\begin{align*}
    D_w(g-\zeta \cdot f)=
    D_w(g)- D_w(\zeta \cdot f) = D_w(g)- w  D_w(f)-f \cdot D_w(\zeta)= D_w(g)-w  D_w(f)-f .
\end{align*}
This is an element of $\mathcal{B}(h_1)$, because both $f$ and $g$ are and $D_w$ acts boundedly on $\mathcal{B}(h_1)$. Consequently, it follows that also
\begin{align*}
     D_w(c  \left(  h_1 + h_2  \right))= c  (  D_w( h_1) +D_w( h_2))\in \mathcal{B}(h_1).
\end{align*}
However, we know that $D_w( h_i) \in \mathcal{B}(h_i)$. This  means that $D_w(c  \left(  h_1 + h_2  \right))$ is either the zero function if $c=0$, or not an element of $\mathcal{B}(h_1)$. Consequently, we arrive at
\begin{align*}
A_f\cap (\mathcal{B}(h_1) \times \mathcal{B}(h_1)) = \left\{(h,g) \in \mathcal{B}(h_1) \times \mathcal{B}(h_1) \ |  g(\zeta)-\zeta  h(\zeta) \equiv 0 \right\}.
\end{align*}
This means that  $A_f\cap (\mathcal{B}(h_1) \times \mathcal{B}(h_1))$ is the multiplication operator with the independent variable on $\mathcal{B}(h_1)$, which is a simple, symmetric operator with deficiency index $(1,1)$. The second relation $A_f\cap (\mathcal{B}(h_2) \times \mathcal{B}(h_2))$ is treated analogously. 
\end{proof}
The next theorem is, in principle, a special case of \cite[Theorem 4.7]{article55}. The main difference is that the symmetric operators we consider here are not necessarily densely defined, but that is an assumption that is not crucial to their argument. In our specific situation, this result can be proven in a more elementary manner using the multiplication identity of $D_w$. 
\begin{theorem}\label{Thrm:FundRed}
Let $f\in \mathcal{N}_{sym}$ such that neither $f$ nor $-f$ is a Herglotz-Nevanlinna function. Moreover, consider the realization 
$(\mathcal{L}(f),A_f,\phi_f)$ constructed in Section \ref{Sec:Construction}, where 
\begin{align*}
    \mathcal{L}(f)  = \frac{\sqrt{2}}{h_1+h_2} \cdot (\mathcal{B}(h_1) \oplus (- \mathcal{B}(h_2))
\end{align*}
for a pair of functions $h_1,h_2 \in \mathcal{S}_0$ which are relatively prime. Finally, set 
\begin{gather*}
    q_1(\zeta)= \mathrm{i} \frac{ 1-h_1(\zeta)}{1+h_1(\zeta)}
    \quad 
k_1(\zeta)\vcentcolon =\frac{1+h_1(\zeta)}{h_1(\zeta)+h_2(\zeta)} \\
q_2(\zeta)=-\mathrm{i} \frac{1+h_2(\zeta)}{ 1-h_2(\zeta)}
\quad 
k_2(\zeta)\vcentcolon = \frac{1-h_2(\zeta)}{h_1(\zeta)+h_2(\zeta)} \\
\phi(w)=k_1 \cdot D_w(q_1) +k_2 \cdot D_w(q_2).
\end{gather*}
Then $\phi(w)\in \mathcal{L}(f)$ for every $w \in \varrho(A_f)\cap \C \setminus \R$, and  there exists a fundamentally reducible relation $\tilde{A}$
such that 
\begin{align}\label{ForUnterschied}
    (A_f-w)^{-1}=(\tilde{A}-w)^{-1} - \frac{[\ \cdot \ ,\phi(\overline{w})]}{ q_1(w)+q_2(w)} \cdot \phi(w) \quad \forall w \in \varrho(A)\cap \varrho(\tilde{A}).
\end{align}
\end{theorem}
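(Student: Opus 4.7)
The plan is to build $\tilde A$ explicitly as an orthogonal sum of two transported Herglotz realizations along a natural fundamental decomposition of $\mathcal{L}(f)$, and then verify \eqref{ForUnterschied} using the multiplication identity for $D_w$ (Definition~\ref{Def:diffquot}).

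First I unwind the conjugation structure. Applying Theorem~\ref{Thrm:BhLq} to $h_1$ gives $\mathcal{B}(h_1) = \tfrac{1+h_1}{\sqrt 2}\cdot \mathcal{L}(q_1)$, and since the kernel $s_h$ is invariant under $h \mapsto -h$ (so $\mathcal{B}(h_2) = \mathcal{B}(-h_2)$), a second application yields $\mathcal{B}(h_2) = \tfrac{1-h_2}{\sqrt 2}\cdot \mathcal{L}(r_2)$ with $r_2 := C^{-1}(-h_2) = -q_2$ Herglotz--Nevanlinna. Inserting these into the definition of $\mathcal{L}(f)$ produces the explicit fundamental decomposition
\[
\mathcal{L}(f) \;=\; k_1 \cdot \mathcal{L}(q_1) \;[+]\; k_2 \cdot (-\mathcal{L}(r_2)),
\]
and I \emph{define} $\tilde A$ as the direct sum, with respect to this decomposition, of the canonical realization relations (Proposition~\ref{Prop:Canreal}) on $\mathcal{L}(q_1)$ and $\mathcal{L}(r_2)$, transported by the multiplication operators $M_{k_1}$ and $M_{k_2}$. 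Then $\tilde A$ is fundamentally reducible and $(\tilde A - w)^{-1}(k_j p) = k_j D_w(p)$. For $g = k_1 p_1 + k_2 p_2 \in \mathcal{L}(f)$ the Leibniz rule, combined with the algebraic identity $k_1 - k_2 \equiv 1$ (which forces $D_w(k_1) = D_w(k_2)$), yields
\[
(A_f - w)^{-1} g - (\tilde A - w)^{-1} g \;=\; \bigl(p_1(w) + p_2(w)\bigr)\, D_w(k_1).
\]
A parallel computation based on the identity $k_1 q_1 + k_2 q_2 \equiv -i$ (immediate from the Cayley definitions) shows that $\phi(w) = k_1 D_w(q_1) + k_2 D_w(q_2) = -(q_1(w) + q_2(w))\, D_w(k_1)$; since $D_w(k_1) \in \mathcal{L}(f)$ was established in the proof of Theorem~\ref{Thrm:Main1}, this also confirms $\phi(w) \in \mathcal{L}(f)$.

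Substituting this expression for $\phi$ and using $q_j(\overline w) = \overline{q_j(w)}$ together with conjugate-linearity of $[\,\cdot\,,\,\cdot\,]_{\mathcal{L}(f)}$ in the second slot reduces \eqref{ForUnterschied} to a single linear-functional identity
\[
p_1(w) + p_2(w) \;=\; -(q_1(w) + q_2(w))\, [g,\, D_{\overline w}(k_1)]_{\mathcal{L}(f)}.
\]
This step is the main obstacle: everything up to this point is a mechanical application of Leibniz, but pairing $D_{\overline w}(k_1)$ against the Krein inner product requires accessing both halves of the decomposition simultaneously. I would verify the identity on the dense set of kernel sections $g = N_f(\cdot, z)$, computing their projections onto the two components via Proposition~\ref{Prop:Conj} as $k_1(\zeta) N_{q_1}(\zeta, z) \overline{k_1(z)}$ and $-k_2(\zeta) N_{r_2}(\zeta, z) \overline{k_2(z)}$, and evaluating the right-hand side with the explicit formula $D_w(k_1) = -\tfrac{1}{h_1+h_2}\bigl[k_2(w) D_w(h_1) + k_1(w) D_w(h_2)\bigr]$ together with $D_w(h_i) = -h_i(w) s_{h_i}(\cdot, \overline w)$ from \eqref{For:WichtigesElement}. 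Both sides then collapse via the same $k_1 q_1 + k_2 q_2 \equiv -i$ identity and its complex conjugate on $\C^-$, completing the proof.
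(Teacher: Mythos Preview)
Your argument is essentially the paper's: same decomposition $\mathcal{L}(f)=k_1\cdot\mathcal{L}(q_1)\;[+]\;k_2\cdot(-\mathcal{L}(-q_2))$, same definition of $\tilde A$ as the transported direct sum, same Leibniz computation yielding $(A_f-w)^{-1}g-(\tilde A-w)^{-1}g=(p_1(w)+p_2(w))\,D_w(k_1)$, and the same use of $k_1q_1+k_2q_2\equiv -i$ together with $D_w(k_1)=D_w(k_2)$ to obtain $\phi(w)=-(q_1(w)+q_2(w))\,D_w(k_1)$.

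Where you diverge is in the final inner-product identity, which you call ``the main obstacle'' and propose to attack by a density argument on kernel sections $N_f(\cdot,z)$. The paper handles this in one line, and the shortcut you missed is that the very \emph{definition} $\phi(\overline w)=k_1\,D_{\overline w}(q_1)+k_2\,D_{\overline w}(q_2)$ already gives the splitting of $\phi(\overline w)$ along the fundamental decomposition: indeed $D_{\overline w}(q_j)=N_{q_j}(\cdot,w)$ is precisely the reproducing kernel of $\mathcal{L}(q_j)$ (respectively $-N_{-q_2}(\cdot,w)$ for the second component). Since the conjugations $M_{k_j}$ are unitary by Proposition~\ref{Prop:Conj}, one gets immediately
\[
[k_1p_1+k_2p_2,\;\phi(\overline w)]_{\mathcal{L}(f)}
=[p_1,N_{q_1}(\cdot,w)]_{\mathcal{L}(q_1)}-[p_2,-N_{-q_2}(\cdot,w)]_{\mathcal{L}(-q_2)}
=p_1(w)+p_2(w)
\]
by the reproducing property in each summand. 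Your proposed verification on $N_f(\cdot,z)$ would succeed (both sides reduce to $\overline{k_1(z)}\,N_{q_1}(w,z)+\overline{k_2(z)}\,N_{q_2}(w,z)$), but it is just this same computation carried out in disguise; no density argument is actually needed.
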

\begin{proof}
First, we note that the functions $q_1$ and $q_2$ are related to $h_1$ and $h_2$ via the Cayley transform. Specifically, we have:
\begin{align*}
q_1=C(h_1) \text{ and } \frac{1}{q_2}=C(h_2).
\end{align*}
The functions $C(h_i), \ \mathrm{i} \in \{1,2\}$ are Herglotz-Nevanlinna functions. This means that both $q_1$ and $-q_2$ are Herglotz-Nevanlinna functions as well. Moreover, we can represent $\mathcal{L}(f)$ as 
\begin{align*}
    \mathcal{L}(f)= \frac{\sqrt{2}}{h_1+h_2} \cdot (\mathcal{B}(h_1) \oplus (- \mathcal{B}(h_2))=k_1 \cdot \mathcal{L}(q_1) \oplus \bigg(-(k_2 \cdot \mathcal{L}(-q_2))\bigg).
\end{align*}
This representation is a consequence of Theorem \ref{Thrm:BhLq}, according to which it holds that 
\begin{align}
\begin{split}\label{For:Conjproof}
    \frac{\sqrt{2}}{h_1+h_2} \cdot \mathcal{B}(h_1)&= \frac{1+h_1  }{h_1+h_2} \cdot \mathcal{L}(q_1) =k_1 \cdot \mathcal{L}(q_1)\\
    \frac{\sqrt{2}}{h_1+h_2} \cdot \mathcal{B}(h_2)&= \frac{1+h_2  }{h_1+h_2} \cdot \mathcal{L}\left(\frac{1}{q_2}\right)
    =\frac{1+h_2  }{h_1+h_2} \cdot \frac{1}{q_2} \cdot \mathcal{L}(-q_2)=
    k_2 \cdot \mathcal{L}(-q_2).
\end{split}
\end{align}
Here, we have used  the identity $\mathcal{L}\left(\frac{1}{q_2}\right)=\frac{1}{q_2} \cdot \mathcal{L}(-q_2)$, which was established in Section \ref{GenBhspaces}. 
\par\smallskip
Now consider the self-adjoint relations $A_1$ and $A_2$ on $\mathcal{L}(q_1)$ and $\mathcal{L}(-q_2)$ given by $(A_i-w)^{-1}=D_w$.  We define the desired fundamentally reducible self-adjoint relation $\tilde{A}\subset \mathcal{L}(f)\times \mathcal{L}(f)$ as follows:
\begin{align*}
    \tilde{A}=     \left( k_1 \cdot A_1 \cdot \frac{1}{k_1} \right) \oplus  \left(k_2 \cdot  A_2 \cdot \frac{1}{k_2}\right).
\end{align*}
Here, the multiplication by $k_1$ stands for the isomorphism from $\mathcal{L}(q_1)$ to $k_1 \cdot \mathcal{L}(q_1)$ and all other multiplications are to be understood accordingly. 
Next, let $k_1 \cdot g$ belong to $k_1 \cdot \mathcal{L}(q_1)$. We  compute 
\begin{align*}
    ((A_f-w)^{-1}-(\tilde{A}-w)^{-1})(k_1 \cdot g)&=
    D_w(k_1 \cdot g)-\left(k_1 \cdot D_w \cdot \frac{1}{k_1}\right)(k_1 \cdot g)\\
    &=k_1 \cdot D_w(g)+g(w) \cdot D_w(k_1)-k_1 \cdot D_w(g) =g(w) \cdot D_w(k_1).
\end{align*}
We also observe that
\begin{align*}
    [ k_1 \cdot g ,\phi(\overline{w})]_{\mathcal{L}(f)} = [ k_1 \cdot g ,k_1 \cdot N_{q_1}(\cdot,w)]_{\mathcal{L}(f)} = [ g , N_{q_1}(\cdot,w)]_{\mathcal{L}(q_1)}=g(w).
\end{align*}
 Therefore, to establish \eqref{ForUnterschied}, we only need to demonstrate that
\begin{align*}
    (q_1(w)+q_2(w)) D_w(k_1)  +\phi(w)=0.
\end{align*}
To this end, we first note that $k_1=1+k_2$, which means that $D_w(k_1)=D_w(k_2)$. Consequently, it follows that
\begin{align*}
    (q_1(w)+q_2(w)) D_w(k_1) +\phi(w) &=
    q_1(w) D_w(k_1)   + k_1 \cdot D_w(q_1) +q_2(w) D_w(k_2)+ k_2 \cdot D_w(q_2) \\
    &=D_w(k_1\cdot q_1)+D_w(k_2\cdot q_2)=D_w(k_1\cdot q_1+k_2\cdot q_2)=0,
\end{align*}
where we have used the multiplication identity of $D_w$ and 
\begin{align*}
    k_1\cdot q_1+k_2\cdot q_2=-\mathrm{i}.
\end{align*}
 A similar calculation shows that \eqref{ForUnterschied} is also valid for functions in the negative part of $\mathcal{L}(f)$, which is  $ k_2 \cdot \mathcal{L}(-q_2)$. This concludes our proof. 
\end{proof}
A similar  idea also allows us to prove the  ``converse'' of Theorem \ref{Thrm:orthcoup}.
\color{black}
\begin{theorem}\label{MainConverse}
 Let 
 $A$ be a  partially fundamentally reducible relation on $\mathcal{K}$. Then $\sigma(A)\cap (\C \setminus \R)$ is a discrete set in $\C \setminus \R$. Moreover, if $(\mathcal{K},A,v)$ is a realization of a function $q$ with base point $\zeta_0\in \varrho(A)$, then $q$ is an element of $\mathcal{N}_{sym}$.
\end{theorem}
\begin{proof} 
Let 
 $A$ be a  partially fundamentally reducible relation on $\mathcal{K}$. Then there exists a fundamental decomposition $\mathcal{K}=\mathcal{K}_+ [+] \mathcal{K}_-$ such that the relations
\begin{align*}
    S_+ \vcentcolon = A \cap (\mathcal{K}_+ \times \mathcal{K}_+) 
    \quad \text{ and } \quad 
    S_- \vcentcolon = A \cap (\mathcal{K}_- \times \mathcal{K}_-) 
\end{align*}
are closed, simple symmetric operators on the Hilbert spaces $\mathcal{K}_+$ and $-\mathcal{K}_-$ with deficiency index $(1,1)$. 
\par\smallskip
Let  $(\Gamma_0^+,\Gamma_1^+, \C)$ and $(\Gamma_0^-,\Gamma_1^-, \C)$ be arbitrary but fixed boundary triples   for  $S_+$ and $S_-$, which exist by  Proposition \ref{Prop:Symop2}. Moreover, let $q_+$ and $q_-$ denote the Weyl-functions, and $y_+$ and $y_-$ denote the Weyl-solutions corresponding to these boundary triples. Note that $q_+$ and $q_-$ are Herglotz-Nevanlinna functions. We also define self-adjoint relations $A_+$ and $A_-$ by
\begin{align*}
    A_+=\mathrm{ker}(\Gamma_0^+) \quad \text{ and } A_-=\mathrm{ker}(\Gamma_0^-).
\end{align*}
Now consider the closed and symmetric operator $S_+ \oplus S_-$ on the Krein space $\mathcal{K}$.  It is clear by construction that both $A$ and $A_+\oplus A_-$ are  self-adjoint extension of $S_+ \oplus S_-$. Moreover, the triple   
\begin{align*}
    (\Gamma_0^+ \oplus (-\Gamma_0^-), \Gamma_1^+ \oplus \Gamma_1^-, \C^2)
\end{align*}
is the boundary triple for $S_+ \oplus S_-$ satisfying  $\mathrm{ker}((\Gamma_0^+ \oplus (-\Gamma_0^-))=A_0\oplus A_1$.
The respective Weyl solution $\gamma$ is given by $\gamma=\gamma_+\oplus \gamma_-$, and the respective Weyl function by
\begin{align*}
m(\zeta) = \begin{pmatrix}
    q_+(\zeta) &0 \\
    0 &-q_-(\zeta)
\end{pmatrix}.
\end{align*}
This allows us to use Krein`s resolvent formalism \cite[Corollary 2.4]{RN07186998219990101}. More precisely, there  exists two $2 \times 2$ matrices $G=(g_{ij})$ and $H=(h_{ij})$ such that 
\begin{align*}
    \zeta \in \varrho(A) \Leftrightarrow 0 \in \varrho(G-m(\zeta) \cdot H) \Leftrightarrow \mathrm{det}(G-m(\zeta) \cdot H)\neq 0.
\end{align*}
We can explicitly compute the determinant as 
\begin{align*}
    \mathrm{det}(G-m(\zeta) \cdot H)&=
    \mathrm{det}\left( 
    \begin{pmatrix}
        g_{11}-q_+(\zeta)  h_{11} &
        g_{12}-q_+(\zeta)  h_{12} \\
        g_{21}+q_-(\zeta)  h_{21}
        &
        g_{22}+q_-(\zeta)  h_{22}
    \end{pmatrix} \right) \\
    &=(g_{11}-q_+(\zeta)  h_{11}) (g_{22}+q_-(\zeta)  h_{22}) -  (g_{21}+q_-(\zeta)  h_{21}) (g_{12}-q_+(\zeta)  h_{12}).
\end{align*}
This is an analytic function in $\mathcal{N}(\C\setminus \R)$, which means that $\sigma(A)\cap (\C \setminus \R)$ is a discrete set in $\C \setminus \R$. Finally, Krein's resolvent formula takes the form 
\begin{align*}
    (A-\zeta)^{-1}= (A_0 -\zeta)^{-1} + \gamma(\zeta) ( H (G-m(\zeta) \cdot H)^{-1}) \gamma^+(\overline{\zeta}) \quad \zeta \in  \varrho(A), 
\end{align*}
which allows us to calculate
\begin{align}
\begin{split}\label{For:Modelop1}
    \left[\left(I+(\zeta-\zeta_0)(A-\zeta)^{-1}\right) v, v\right]_{\mathcal K} &= [(I+(\zeta-\zeta_0)(A_0 -\zeta)^{-1} v, v]_{\mathcal K} \\
    &+ [ \gamma(\zeta) ( H (G-m(\zeta)  H)^{-1}) \gamma^+(\overline{\zeta})  v,v]_{\mathcal K}.
\end{split}
\end{align}
In what follows, our strategy is to show that all occurring functions are of bounded type. 
Let $v=v_++v_-$ be the decomposition of $v$ with respect to the fundamental decomposition $\mathcal{K}_+ [+] \mathcal{K}_-$. Then 
the first summand simplifies to 
\begin{align*}
    [(I+(\zeta-\zeta_0)(A_0 -\zeta)^{-1} v, v]_{\mathcal K} 
    &=
    [(I+(\zeta-\zeta_0)(A_+ -\zeta)^{-1} v_+, v_+]_{\mathcal{K}_+} \\
    &+[(I+(\zeta-\zeta_0)(A_- -\zeta)^{-1} v_-, v_-]_{\mathcal{K}_-} = s_1-s_2,
\end{align*}
where $s_1$ and $s_2$ are Herglotz-Nevanlinna functions and therefore elements of $\mathcal{N}(\C \setminus \R)$. Here, we have used that $A_0$ decomposes into an orthogonal sum $A_0=A_+\oplus A_-$ with respect to the fundamental decomposition $\mathcal{K}=\mathcal{K}_+ [+] \mathcal{K}_-$.
\par\smallskip
Now we turn our attention to the second summand in \eqref{For:Modelop1}. It was shown in  Proposition \ref{Prop:Symop2} that
\begin{align*}
    \zeta \mapsto (\gamma_+(\overline{\zeta}))^+(v_+)&=
    [v_+,\gamma_+(\overline{\zeta})]_{\mathcal{K}_+} \in \mathcal{L}(q_1)\subset \mathcal{N}(\C \setminus \R), \\
    \zeta \mapsto  (\gamma_-(\overline{\zeta}))^+(v_-)&=
    [v_-,\gamma_-(\overline{\zeta})]_{\mathcal{K}_-}
    \in \mathcal{L}(-q_2)\subset \mathcal{N}(\C \setminus \R).
\end{align*}
Moreover, we compute $H (G-m(\zeta) \cdot H)^{-1}$  using the formula for the inverse of a $2 \times 2$ matrix as
\begin{align*}
    H (G-m(\zeta) \cdot H)^{-1} &= 
    \mathrm{det}(G-m(\zeta) \cdot H) \cdot 
    H \cdot
    \begin{pmatrix}
        g_{22}+q_-(\zeta)  h_{22} &
        -g_{21}-q_-(\zeta)  h_{21} \\
        -g_{12}+q_+(\zeta)  h_{12}
        &
        g_{11}-q_+(\zeta)  h_{11}
    \end{pmatrix} \\
    &\vcentcolon=\begin{pmatrix}
          l_{11}(\zeta) &l_{12}(\zeta)\\
          l_{21}(\zeta) &l_{22}(\zeta)
      \end{pmatrix}.
\end{align*}
Since $\mathrm{det}(G-m(\zeta) \cdot H)$ is an element of $\mathcal{N}(\C \setminus \R)$ and the entries of $H$ are constants, it follows that the entries
$(l_{ij}(\zeta))$ are also elements of $\mathcal{N}(\C \setminus \R)$.  We conclude that
\begin{align*}
     \gamma(\zeta) ( H (G-m(\zeta) \cdot H)^{-1}) \gamma^+(\overline{\zeta})  v &= 
      \gamma(\zeta) \begin{pmatrix}
          l_{11}(\zeta) &l_{12}(\zeta)\\
          l_{21}(\zeta) &l_{22}(\zeta)
      \end{pmatrix}  \cdot 
      \begin{pmatrix}
          [v_+,\gamma_+(\overline{\zeta})] \\
          [v_-,\gamma_-(\overline{\zeta})]
      \end{pmatrix} \\
      &=\bigg(l_{11}(\zeta)  [v_+,\gamma_+(\overline{\zeta})]+l_{12}(\zeta)  [v_-,\gamma_-(\overline{\zeta})]\bigg)  \gamma_+(\zeta) \\
      &+\bigg(l_{21}(\zeta)  [v_+,\gamma_+(\overline{\zeta})]+l_{22}(\zeta)  [v_-,\gamma_-(\overline{\zeta})]\bigg)  \gamma_-(\zeta) \\
      &= s_3(\zeta)  \gamma_+(\zeta) + s_4(\zeta)  \gamma_-(\zeta),
\end{align*}
where $s_3$ and $s_4$ are functions of the class $\mathcal{N}(\C\setminus \R)$. Consequently, it follows that
\begin{align*}
    [ \gamma(\zeta) ( H (G-m(\zeta) \cdot H)^{-1}) \gamma^+(\zeta)v,v]_{\mathcal K} 
    &=
    [  s_3(\zeta)  \gamma_+(\zeta) + s_4(\zeta)  \gamma_-(\zeta),v]_{\mathcal K} \\
    &= s_3(\zeta)  \overline{[v_+, \gamma_+(\zeta) ]}_{\mathcal{K}_+}+ 
    s_4(\zeta)  \overline{[v_-,\gamma_-(\zeta)]}_{\mathcal{K}_-} \in \mathcal{N}(\C\setminus \R),
\end{align*}
where  we have used that $\overline{[v_+,\gamma_+(\zeta)]}_{\mathcal{K}_+}$  and $\overline{[v_-,\gamma_-(\zeta)]}_{\mathcal{K}_-}$ belong to $\mathcal{N}(\C\setminus \R)$.
\par\smallskip
Originally, we had assumed that $(\mathcal{K},A,v)$ is a realization of a function $q$, i.e. that
\begin{align*}
    q(\zeta)= \overline{q(\zeta_0)}+(\zeta- \overline{\zeta_0})  \left[\left(I+(\zeta-\zeta_0)(A-\zeta)^{-1}\right) v, v\right]_{\mathcal K} \quad \forall \zeta \in \varrho(A).
\end{align*}
Our preceding analysis has shown that  the inner product coincides with a function in $ \mathcal{N}(\C \setminus \R)$ on $\varrho(A)$,  which means that $q\in \mathcal{N}(\C \setminus \R)$. This concludes the proof. 
\end{proof}
We can summarize our results in the following theorem: 
\begin{theorem}
Let $f$ be a meromorphic function on $\C \setminus \R$ and let
\begin{align*}
    U \vcentcolon = (\C \setminus \R) \setminus \{w \in \C \setminus \R \vcentcolon \ w \ \text{is a pole of} \ f \}
\end{align*}
denote the domain of analyticity. Then $f\in \mathcal{N}_{sym}$ if and only if there  exists a Krein space $\mathcal{K}$ and a partially fundamentally reducible relation $A$ in $\mathcal{K}$ such that 
   \begin{gather*}
U \subset\varrho(A) \quad \text{ and} \quad 
f(\zeta)= \overline{f(\zeta_0)}+(\zeta- \overline{\zeta_0})  \left[\left(I+(\zeta-\zeta_0)(A-\zeta)^{-1}\right) v, v\right]_{\mathcal K} \quad \forall \zeta \in U  \\
\overline{\mathrm{span}} \{ \left(I+(\zeta-\zeta_0)(A-\zeta)^{-1}\right) v  \vcentcolon \zeta \in \varrho(A) \}= \mathcal K.
\end{gather*}
\end{theorem}
\begin{proof}
    We have established the ``only if`` direction in Theorem \ref{Thrm:Main1} and the ``if'' direction in Theorem  \ref{MainConverse}.
\end{proof}
\color{black}
\section{Examples}
\subsection{Generalized Nevanlinna functions}
There are several conceptually different ways to construct minimal realizations for generalized Nevanlinna functions. For example, the construction in \cite{edsjsr.119452920031201} is based on reproducing kernel Pontryagin spaces, while  the one in \cite{DijksmaLangerLugerShondin04}  relies on a certain factorization result.  In this article, we have used the classical theory of model spaces and related function spaces.  This function-theoretic approach allows us to give a short reproof of the following result \cite[Theorem 1.1]{WIETSMA2018997}:
\begin{theorem}
    Let $f$ be a Generalized Nevanlinna function with index $\kappa$. For any given $w \in \C^-$, the equation $f(\zeta)-w=0$ has exactly $\kappa$ solutions on $\C^+$ (counted with multiplicities).
\end{theorem}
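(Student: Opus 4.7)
The idea is to combine the explicit construction of Section~\ref{Sec:Construction} with a Rouch\'e argument on the upper half-plane. By Theorem~\ref{basic1.5} one can write
\[
f = i \cdot \frac{h_2 - h_1}{h_2 + h_1}, \qquad h_1, h_2 \in \mathcal{S}_0 \text{ relatively prime,}
\]
so that the associated modeling reproducing kernel space is
\[
\mathcal{L}(f) = \frac{\sqrt{2}}{h_1+h_2} \cdot \bigl(\mathcal{B}(h_1) \oplus (-\mathcal{B}(h_2))\bigr).
\]
The negative part of the Krein space $\mathcal{L}(f)$ is, up to the conjugation by $\sqrt{2}/(h_1+h_2)$, the Hilbert space $\mathcal{B}(h_2)$. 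Since $f$ is a generalized Nevanlinna function of index $\kappa$, the space $\mathcal{L}(f)$ is Pontryagin of index $\kappa$, forcing $\dim \mathcal{B}(h_2) = \kappa$; by Proposition~\ref{Prop:Rovnak} this implies that $h_2|_{\C^+}$ is a finite Blaschke product of degree $\kappa$.

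Next I would rephrase the equation: a direct calculation shows that for $\zeta \in \C^+$ and $w \in \C^-$,
\[
f(\zeta) = w \iff h_1(\zeta) - \alpha\, h_2(\zeta) = 0, \qquad \alpha := \frac{i-w}{i+w} = C(w),
\]
and for $w \in \C^-$ one checks $|\alpha| > 1$. Hence it suffices to count, with multiplicity, the zeros of $F := h_1 - \alpha\, h_2$ in $\C^+$.

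The core step is Rouch\'e applied to $F$ and $-\alpha\, h_2$ on the contour $\gamma_{R,\epsilon}$ consisting of the horizontal segment $\{x + i\epsilon : |x| \leq R\}$ closed by the arc $\{|\zeta|= R,\ \mathrm{Im}\,\zeta \geq \epsilon\}$. Writing $h_2 = \prod_{j=1}^\kappa \frac{\zeta - \zeta_j}{\zeta - \overline{\zeta_j}}$ up to a unimodular constant, an elementary computation gives
\[
|h_2(x+i\epsilon)|^2 = \prod_{j=1}^\kappa \left(1 - \frac{4\,\epsilon\, \mathrm{Im}\,\zeta_j}{(x - \mathrm{Re}\,\zeta_j)^2 + (\epsilon + \mathrm{Im}\,\zeta_j)^2}\right),
\]
so $|h_2(x+i\epsilon)| \to 1$ uniformly in $x \in \R$ as $\epsilon \downarrow 0$; combined with $\|h_1\|_{\mathcal{H}^\infty(\C^+)} \leq 1$, this yields $|\alpha\, h_2| > 1 \geq |h_1|$ on the horizontal segment for $\epsilon$ small. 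On the arc, $h_2(\zeta) \to \mathrm{const} \in \T$ as $|\zeta|\to\infty$, so the same inequality holds for $R$ large. Rouch\'e then shows that $F$ and $-\alpha\, h_2$ have the same number of zeros inside $\gamma_{R,\epsilon}$, which equals $\kappa$ once all zeros of $h_2$ are enclosed. Letting $R \to \infty$ and $\epsilon \downarrow 0$ exhausts $\C^+$, giving exactly $\kappa$ solutions of $f(\zeta) = w$ in the upper half-plane.

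The main obstacle is the estimate near the real axis. Since $h_1$ is only defined analytically on $\C^+$, the contour must remain strictly inside, and the Rouch\'e inequality $|h_1| < |\alpha\, h_2|$ on $\{\mathrm{Im}\,\zeta = \epsilon\}$ rests entirely on the \emph{uniform-in-$x$} convergence $|h_2(x+i\epsilon)| \to 1$ as $\epsilon \downarrow 0$. This uniformity is a genuine consequence of $h_2$ being a \emph{finite} Blaschke product, which is precisely the information furnished by the Pontryagin (rather than general Krein) character of $\mathcal{L}(f)$; for an infinite Blaschke product, zeros of $h_2$ could accumulate at $\R$ and the step would fail.
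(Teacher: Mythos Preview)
Your argument is correct and takes a genuinely different route from the paper's. Both proofs share the same starting point: the representation $f=i\,\frac{h_2-h_1}{h_2+h_1}$ together with the identification $\kappa=\dim\mathcal{B}(h_2)$, from which one deduces that $h_2|_{\C^+}$ is (up to a unimodular factor) a finite Blaschke product of degree~$\kappa$. From there the paper proceeds \emph{algebraically}: it first observes that $f(\zeta)=-i$ is equivalent to $h_2(\zeta)=0$, settling the case $w=-i$ immediately; then it reduces an arbitrary $w\in\C^-$ to this special case by noting that $a f+b$ (with $a>0$, $b\in\R$) is again generalized Nevanlinna of the same index and that $(af+b)+i=0$ rewrites as $f=w$ for a suitable $w$. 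Your approach is \emph{analytic}: you rewrite $f(\zeta)=w$ directly as $h_1(\zeta)=\alpha\,h_2(\zeta)$ with $|\alpha|>1$ and apply Rouch\'e on exhausting contours, the key estimate $|h_2(x+i\epsilon)|\to 1$ uniformly in $x$ being exactly where the finiteness of the Blaschke product enters.

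The paper's reduction is shorter and avoids any contour argument, but it imports the fact that real affine transformations preserve the index~$\kappa$. Your Rouch\'e route is more self-contained in this respect and treats all $w\in\C^-$ uniformly; the uniform boundary estimate you isolate is the genuine analytic content and you correctly identify that it fails for infinite Blaschke products. One minor point: the reference to Proposition~\ref{Prop:Rovnak} is not quite the right justification for ``$\dim\mathcal{B}(h_2)=\kappa\Rightarrow h_2|_{\C^+}$ is a degree-$\kappa$ Blaschke product''; the cleanest argument goes via Theorem~\ref{Thrm:BhLq} (so $\mathcal{L}(q_2)$ is $\kappa$-dimensional, hence $q_2$ is rational and real on $\R$, hence $h_2=C(q_2)$ is rational inner).
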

\begin{proof}
    Let $f$ be represented as 
    \begin{align*}
        f = \mathrm{i}  \frac{h_2-h_1}{h_2+h_1},
    \end{align*}
    where $h_1\in \mathcal{S}_0$ and $h_2\in \mathcal{S}_0$ are as in Proposition \ref{Prop:MainTech}.
    The index $\kappa$ coincides with the dimension of the negative part of $\mathcal{L}(f)$, which means that  $\kappa =\mathrm{dim}(\mathcal{B}(h_2))$. Consequently, the function $h_2 \in \mathcal{S}_0$  is generated by a finite Blaschke product with $\kappa$ zeros see Section \ref{Sec:Bh spaces}.  However, the zeros of $h_2$ in $\C^+$ correspond exactly to the zeros of $f+\mathrm{i}=0$ in $\C^+$(counted with multiplicities). 
    \par\smallskip
    In summary, we have proven our claim for $w=-\mathrm{i}$.
    We extend this to arbitrary points in the lower-half plane in the following way: First, note that  the function $a  f+b, \ a,b \in \R_+$ has the same index as $f$. Therefore,  we can apply  our result for $w=-\mathrm{i}$ to this function, and we infer that the equation
    \begin{align*}
        (a  f+b)+\mathrm{i}=0 
        \Leftrightarrow
        f - \frac{\mathrm{i}+b}{-a}=0, \quad a,b \in \R_+
    \end{align*}
     has exactly $\kappa$ solutions on $\C^+$ (counted with multiplicities). Finally, it is clear that we can write any $w \in \C^-$ as $w=\frac{\mathrm{i}+b}{-a}$ with $a,b \in \R_+$.
\end{proof}
\subsection{Real complex functions}
Next we discuss real complex function which  have 
 attracted substantial interest in the recent past, see \cite{MR213787120050101}, \cite{edsjsr.2490349820030101} and the survey \cite{articleRealpositive}. They are defined as follows:
\begin{definition}
    Let $f \in \mathcal{N}_{sym}$. Then  $f$ is  a real complex function function if $f_{|\C^+}$ has real boundary values $\lambda$-almost eeverywhere 
\end{definition}
We have seen in  Section \ref{Sec:Intersection} that  such a function $f$ can be represented as 
\begin{align*}
    f = \mathrm{i}  \frac{h_2-h_1}{h_2+h_1},
\end{align*}
where $h_1$ and $h_2$ are generated by inner functions $V_1$ and $V_2$. In this case, the associated model spaces $\mathcal{H}(V_1)$ and $\mathcal{H}(V_2)$ intersect trivially, since  $h_1$ and $h_2$ are relatively prime and in turn also $V_1$ and $V_2$ are relatively prime. Consequently, the restriction operator 
\begin{align*}
    \mathrm{Res} \vcentcolon\mathcal{B}(h_1) \oplus  -\mathcal{B}(h_2) \rightarrow \mathcal{H}(V_1) \oplus -\mathcal{H}(V_2), \quad f \mapsto f_{|\C^+}
\end{align*}
is an isometric isomorphism. This means that its   realization acts, up to an isomorphism, on a direct outer sum of Model spaces. As already mentioned, the structure of model spaces has been the subject of extensive research and is very well understood.
\par\smallskip
Here, we want to give one example of how our construction takes the structure of the underlying function $f$ into account. If $f$ is a real complex function, then 
 $f_{|\C^+}$ and $f_{|\C^-}$ are pseudocontinuations of each other, and this  property is inherited by the space $\mathcal{L}(f)$. Indeed, since this is true for any function $g \in \mathcal{B}(h_1) \oplus  -\mathcal{B}(h_2)$ and for the function
 \begin{align*}
    \frac{1}{h_1+h_2},
 \end{align*}
 it is also true for the function
 \begin{align*}
     \frac{1}{h_1+h_2} \cdot g \in  \frac{1}{h_1+h_2} \cdot (\mathcal{B}(h_1) \oplus  -\mathcal{B}(h_2)) = \mathcal{L}(f).
 \end{align*}
\subsection{The extended Nevanlinna class}
Originally, we were interested in the following function class, which was  introduced in  \cite{delsarte1986pseudo}:
\begin{definition}
    Let $f \in \mathcal{N}_{sym}$. Then $f$ is in the extended Nevanlinna class if $f$ is a product of the form
    $f=g \cdot m$, where $g$ is a Herglotz-Nevanlinna function and $m$ is a density function. Here, a density function is a real complex function with  non-negative boundary values.    
\end{definition}
The extended Nevanlinna class generalizes the notion of generalized Nevanlinna functions in a natural way by allowing the density factor $m$ to be non-rational. On the other hand, this class also generalizes the class of real complex functions. Indeed, it was shown in \cite[Theorem 3.1]{delsarte1986pseudo} that any real complex function $f$ is a product of the form 
\begin{align*}
    f=g \cdot m
\end{align*}
where $g$ is a Herglotz-Nevanlinna function with a real boundary values and $m$ is a density function. 
In any case, we can represent $f$ as 
\begin{align*}
    f = \mathrm{i}  \frac{h_2-h_1}{h_2+h_1}
\end{align*}
where $h_2$ is generated by an inner function. 
\newpage

\printbibliography

\end{document}